\newtheorem{theorem}{Theorem}[section]
\newtheorem{corollary}[theorem]{Corollary}
\newtheorem{proposition}[theorem]{Proposition}
\newtheorem{lemma}[theorem]{Lemma}
\theoremstyle{definition}
\newtheorem{example}[theorem]{Example}
\newtheorem{remark}[theorem]{Remark}
\newtheorem{observation}[theorem]{Observation}
\def\R{\mathbb R}
\DeclareMathOperator{\vol}{vol}
\newcommand{\Prob}[1]{\mathbb{P}\left(#1\right)}
\newcommand{\Exp}[1]{\mathbb{E}\left[#1\right]}
\definecolor{DarkDandelion}{HTML}{D97400}
\title{The edge-isoperimetric number of graphs and their powers: approaches from spectral graph theory, optimization and finite geometry}
\author{Aida Abiad\thanks{\texttt{a.abiad.monge@tue.nl}\\
Department of Mathematics and Computer Science, Eindhoven University of Technology, Eindhoven, The Netherlands\\
Department of Mathematics and Data Science, Vrije Universiteit Brussel, Belgium}
\quad
Nils van de Berg\thanks{\texttt{n.p.v.d.berg@tue.nl}\\
Department of Mathematics and Computer Science, Eindhoven University of Technology, Eindhoven, The Netherlands}
\quad
Emanuel Juliano\thanks{\texttt{emanuelsilva@dcc.ufmg.br}\\
Department of Computer Science, Federal University of Minas Gerais, Brazil}
\quad
Harper Reijnders\thanks{\texttt{l.e.r.m.reijnders@tue.nl}\\
Department of Mathematics and Computer Science, Eindhoven University of Technology, Eindhoven, The Netherlands}
\\
Robin Simoens\thanks{\texttt{Robin.Simoens@UGent.be}\\
Department of Mathematics: Analysis, Logic and Discrete Mathematics, Ghent University, Belgium\\
Department of Mathematics, Universitat Politècnica de Catalunya, Spain}
\quad
Thijs van Veluw\thanks{\texttt{Thijs.vanVeluw@ugent.be}\\
Department of Mathematics, Computer Science and Statistics, Ghent University, Belgium\\
Department of Mathematics and Computer Science, Eindhoven University of Technology, Eindhoven, The Netherlands}
\quad
Jim Wittebol\thanks{\texttt{Jim.Wittebol@vub.be}\\
Department of Mathematics and Data Science, Vrije Universiteit Brussel, Belgium}}
\date{}
\begin{document}

\maketitle

\begin{abstract}
We obtain several sharp spectral bounds, approximations, and exact values for the  isoperimetric number and related edge‑expansion parameters of graphs. Our results focus on graph powers and on families of graphs with rich algebraic or geometric structure, including distance‑regular graphs and graphs arising from finite geometries, among others. Our proofs use techniques from spectral graph theory, linear optimization, finite geometry, and probability, yielding new machinery for analysing edge-expansion phenomena in highly structured graphs.\\

\noindent   \textbf{Keywords:} Isoperimetric number, Graph, Eigenvalue bounds, Graph powers, Approximation, Linear optimization, Tight sets, Split graphs
\end{abstract}

\section{Introduction}

Edge expansion is a fundamental property in graphs that is captured by a variety of different but related parameters, for example the isoperimetric number, the Cheeger constant, and the sparsity number. 
Consider a graph $G=(V, E)$ and a subset of its vertices $S \subseteq V$. The \emph{volume} of $S$, $\vol(S)$, is defined as the sum of the degrees of the vertices in $S$. Let $\partial_G(S)$ denote the set of edges with exactly one endpoint in $S$. We define the following expansion parameters of $S$
\begin{equation*}
    i_G(S) = \frac{|\partial_G(S)|}{|S|},\quad h_G(S) = \frac{|\partial_G(S)|}{\vol(S)}, \quad \sigma_G(S) = \frac{|\partial_G(S)|}{|S||V\setminus S|}. 
\end{equation*}
When the graph \(G\) is clear from the context, we use \(\partial S\), \(i(S)\), \(h(S)\) and \(\sigma(S)\) instead of \(\partial_G(S)\), \(i_G(S)\), \(h_G(S)\) and \(\sigma_G(S)\). Let $n = |V|$ be the order of the graph. The expansion parameters for a graph are defined as the minimum over all appropriate sets. In particular,
\begin{itemize}
    \item The \emph{isoperimetric number} (or \emph{edge-isoperimetric number}) of $G$ is
\begin{equation*}
    i(G) = \min\left\{i_G(S)\colon\,S \subseteq V,\,1 \leq |S| \leq\frac12 n\right\}.
\end{equation*}
\item The \emph{Cheeger constant} of $G$ is   
\begin{equation*}
    h(G) = \min\left\{h_G(S)\colon\,S \subseteq V,\,1 \leq \vol(S) \leq \vol(V)/2\right\}.
\end{equation*}

\item The \emph{sparsity} of $G$ is   
\begin{equation*}
    \sigma(G) = \min\left\{\sigma_G(S)\colon\,S \subseteq V,\,\emptyset\neq S\neq V\right\}.
\end{equation*}
\end{itemize}

\noindent These parameters are highly related, especially in the case of regular graphs. In this paper, we focus mostly on the isoperimetric number. Note that there are also vertex expansion parameters such as the vertex-isoperimetric number (see for example \cite[Section~4.6]{HN2006}), but we do not consider those.

The study of edge expansion parameters has deep connections to spectral graph theory, combinatorial optimization, and theoretical computer science. Since computing the isoperimetric number is NP-hard \cite[Theorem 2]{kaibel2004expansion}, a lot of literature focuses on bounding this parameter for specific graph classes or approximating it via spectral methods. In particular, isoperimetric-type inequalities relate $i(G)$ to the second-largest eigenvalue of the adjacency matrix or the second-smallest eigenvalue of the Laplacian matrix
, providing a bridge between combinatorial and spectral properties. These inequalities have been instrumental in the analysis of expander graphs, random walks, and mixing times of Markov chains. Two celebrated spectral bounds 
are 
\begin{equation}
\label{eq:Mohar}
\frac{\mu_2}{2}\leq i(G) \leq \sqrt{\mu_2(2d_1-\mu_2)},
\end{equation}

where $\mu_2$ is the second smallest Laplacian eigenvalue and $d_1$ is the largest degree \cite{alon1985lambda1,M1989}. These two bounds have been a staple of spectral graph theory. In recent years other spectral bounds have followed, such as the following upper bound for the isoperimetric number of regular graphs (see for example \cite[Lemma 5]{QKM2020}):
\begin{equation}
    \label{eq:QKMmun}
    i(G) \le \frac{\lceil \frac n2 \rceil}{n}\mu_n,
\end{equation}
where $\mu_n$ is the largest Laplacian eigenvalue. Also, an alternative upper bound  in terms of the graph degree sequence and the Laplacian eigenvalues is shown in \cite{AFHP2014}, and
an improvement of the lower bound \eqref{eq:Mohar} via a linear program (LP) that uses information of the Laplacian eigenvectors in \cite{DM2019}.

In this paper, we derive several sharp spectral bounds, approximations and exact values for the isoperimetric number and related edge-expansion parameters, focusing on graph powers and graphs with a rich algebraic and geometric nature (such as distance-regular graphs and graphs coming from finite geometries). Our proofs use techniques from spectral graph theory, linear optimization, finite geometry and probability. 

We start in Section \ref{sec:powers} by deriving sharp bounds on the isoperimetric number of graph powers, contributing to the existing literature in this direction  \cite{cartesianpowersregulargrpahs,hypercubeedgeisopowers,V2026}. In particular, we show that the lower bound \eqref{eq:Mohar} holds for the more general class of graph  powers, even when the spectrum of the graph power and the base graph are not related, and we also derive an alternative upper bound. Both bounds are shown using the polynomial interlacing method. They depend on the spectrum of the base graph and on a polynomial which can be chosen freely; thus, one needs to make a good choice of the polynomial to obtain the strongest bound. We propose linear optimization methods to compute the new polynomial eigenvalue bounds, derive the best possible bounds for small values of the power, and present multiple graphs that exhibit tightness for the new bounds.

An alternative lower bound for the isoperimetric number comes from a relaxation of the LP for the sparsity parameter \cite{LLR1995}. This LP has been studied for strongly regular graphs in \cite{S2005}. In Section \ref{sec:DRG}, we derive a closed formula for the optimal value of this LP for distance-regular graphs. We show that the optimal value of the LP can be computed using only the intersection array, without having to construct the distance-regular graph. This contributes to the previous literature on expansion parameters of distance-regular graphs  \cite{KL2013,QKM2020,S2005}. 

Although computing the isoperimetric number of a general graph is NP‑complete, its exact value has been determined for several notable graph families \cite{Aslan2013,M1989}. 
In Section \ref{sec:finitegeometry}, we propose a new method for establishing exact isoperimetric values for graphs arising from finite geometry. Our approach is based on a simple but effective observation linking the isoperimetric number to the existence of tight sets in the underlying geometry. This connection yields a novel and easy method for exactly determining isoperimetric numbers in graphs originating from finite geometries, while previously only bounds have been studied, see e.g. \cite{levi}. We demonstrate its power and applicability by obtaining explicit formulas for the isoperimetric number of certain Grassmann graphs and various polar graphs.

Despite the existence of eigenvalue bounds for several NP-hard graph parameters, most of these seem not to be determined by the graph spectrum \cite{MRC,bch2015,h2020,h1996,lwyl2010}. In Section \ref{sec:NDS} we demonstrate that the isoperimetric number is one of them. We do so by providing an infinite family of pairs of cospectral connected regular graphs with different isoperimetric numbers. 

Motivated by similar research for other random graph models~\cite{Bollobas1988, Alon1986,BHKL2008}, in Section \ref{sec:split}, we establish the isoperimetric number of the random split graph model and show that with high probability the spectral upper bound from \cite{AFHP2014} is tight for that model.

\section{Preliminaries}

For a graph $G$, we denote the number of its vertices by $n$, the degrees of the $n$ vertices by $d_1\ge \dots \ge d_n$, the adjacency eigenvalues by $\lambda_1\geq\dots \geq\lambda_n$, and the Laplacian eigenvalues by $0=\mu_1\leq\dots\leq\mu_n$.
If \(G\) is \(k\)-regular, that is, if \(d_1=\dots=d_n=k\), then \(\lambda_i+\mu_i=k\) and in particular \(\lambda_1=k\).

The \emph{Cartesian product} of two graphs $G$ and $H$ is the graph $G \times H$ with vertex set $V(G) \times V(H)$ such that the vertices $(v,w)$ and $ (v',w')$ are adjacent whenever $v$ is adjacent to $v'$ in G and $w = w'$, or $v = v'$ and $w$ is adjacent to $w'$ in $H$.

Eigenvalue interlacing is a powerful method for obtaining spectral bounds in graph theory. We briefly recall interlacing of quotient matrices. For more details, see \cite[Sections 2.3 and 2.5]{spectra}.

Let $M$ be a symmetric $n\times n$ matrix, and let $\mathcal P=\{P_1,\dots,P_m\} $ be a partition of the set $\{1,\dots, n\}$ into non-empty subsets. The \emph{quotient matrix} of $M$ with respect to the partition $\mathcal P$ is defined as the $m\times m$ matrix $B$ with
$$B_{ij}=\frac1{|P_i|} \sum_{a \in P_i}\sum_{b\in P_j} M_{ab}.$$
The value $B_{ij}$ can be interpreted as the average row sum of the submatrix of $M$ on the rows indexed by $P_i$ and the columns indexed by $P_j$. If $M$ is the adjacency matrix of a graph, then $B_{ij}$ counts the average number of neighbours in $P_j$ among vertices in $P_i$.

The partition $\mathcal P$ is called \emph{equitable} or \emph{regular} if for all $i$ and $j$, the submatrix of $M$ on $P_i$ and $P_j$ has the property that every row sums to $B_{ij}$. A subset $S$ of the vertices of a regular graph is called \emph{intriguing} if the partition $\{S,S^c\}$ (where $S^c$ is the complement of $S$ in $\{1,\dots,n\}$) of the vertex set is non-trivial and equitable with respect to the adjacency matrix. In that case, every vertex of $S$ has $B_{11}$ neighbours in $S$ and $B_{12}$ neighbours outside of $S$, and every vertex outside of $S$ has $B_{21}$ neighbours in $S$ and $B_{22}$ neighbours outside of $S$.

If $\alpha_1\ge \dots \ge \alpha_n$ and $\beta_1\ge \dots \ge \beta_m$ are the eigenvalues of $M$ and $B$, respectively, then the eigenvalues of $B$ \emph{interlace} those of $A$, meaning that $\alpha_{n-m+i} \le\beta_i \le  \alpha_i$ for all $i\in\{1,\dots,m\}$. If the interlacing is \emph{tight} (that is, there exists a $k\in\{0,\dots,m\}$ such that $\alpha_i=\beta_i$ for $1\le i\le k$ and $\alpha_{n-m+i}=\beta_i$ for $k+1\le i \le m$), then the partition is equitable \cite{spectra}. In particular, if $S$ is a subset of vertices of a regular graph and the eigenvalues of the quotient matrix $B$ of $A$ with respect to the partition $\{S,S^c\}$ tightly interlace the eigenvalues of $A$, then $S$ is an intriguing set.

Tight interlacing is used for establishing the equality case in the following theorem.

\begin{theorem}[{\cite[Theorem~3.5]{Haemers1995}}]\label{thm:interlacing}
    Let \(G\) be a \(k\)-regular graph on \(n\) vertices with Laplacian eigenvalues $0=\mu_1\le\dots\le\mu_n$. For any subset \(S\) of the vertices we have
    \[\left(1-\frac{|S|}{n}\right)\mu_2\leq\frac{|\partial S|}{|S|}\leq\left(1-\frac{|S|}{n}\right)\mu_n.
    \]
    If equality holds in either inequality, then $S$ is an intriguing set.
\end{theorem}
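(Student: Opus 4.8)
The plan is to apply eigenvalue interlacing to the Laplacian matrix $L = kI - A$ with respect to the two-part partition $\{S,S^c\}$, and to extract both inequalities from the single nonzero eigenvalue of the resulting $2\times 2$ quotient matrix. Write $s = |S|$ and $e = |\partial S|$, and assume $\emptyset \neq S \neq V$ so that the partition is non-trivial. First I would compute the quotient matrix $B$ of $L$ for this partition. Since every row of $L$ sums to zero, for a vertex $a \in S$ the entries of $L$ in the columns indexed by $S^c$ sum to minus its number of neighbours in $S^c$; summing over $a \in S$ shows that the off-diagonal blocks aggregate exactly $\pm e$ and the diagonal blocks aggregate $\pm e$ as well. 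This yields
\[
B = \begin{pmatrix} e/s & -e/s \\ -e/(n-s) & e/(n-s) \end{pmatrix},
\]
whose trace is $\tfrac{en}{s(n-s)}$ and whose determinant is $0$. Hence $B$ has eigenvalues $0$ and $\beta := \tfrac{en}{s(n-s)} \ge 0$.

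Next I would invoke the interlacing of the quotient matrix eigenvalues with those of $L$. Ordering the Laplacian eigenvalues decreasingly as $\alpha_i = \mu_{n+1-i}$ and the eigenvalues of $B$ as $\beta_1 = \beta \ge \beta_2 = 0$, interlacing with $m = 2$ gives $\alpha_{n-1} \le \beta_1 \le \alpha_1$, that is, $\mu_2 \le \beta \le \mu_n$ (the remaining interlacing inequality, $\mu_1 \le \beta_2 \le \mu_{n-1}$, is vacuous since $\mu_1 = 0$). Substituting $\beta = \tfrac{e}{s}\cdot\tfrac{n}{n-s}$ and rearranging immediately produces
\[
\left(1 - \frac{s}{n}\right)\mu_2 \;\le\; \frac{e}{s} \;\le\; \left(1 - \frac{s}{n}\right)\mu_n,
\]
which is the claimed pair of bounds.

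Finally, for the equality statement I would use the tight-interlacing criterion recalled just before the theorem. Equality in the upper bound means $\beta = \mu_n = \alpha_1$, so $\beta_1 = \alpha_1$; together with $\beta_2 = 0 = \mu_1 = \alpha_n$ this is precisely tight interlacing with $k = 1$. Equality in the lower bound means $\beta = \mu_2 = \alpha_{n-1}$, so $\beta_1 = \alpha_{n-1}$; together with $\beta_2 = 0 = \mu_1 = \alpha_n$ this is tight interlacing with $k = 0$. In either case the partition $\{S,S^c\}$ is equitable, and being non-trivial, it certifies that $S$ is an intriguing set. The main thing to get right is the bookkeeping in this equality case: one must pair the trivial eigenvalue $0$ of $B$ with the trivial Laplacian eigenvalue $\mu_1 = 0$ and then check that the correct value of $k$ meets the tight-interlacing definition, since this is exactly where the argument could silently fail if the (opposite) ordering conventions for $\mu_i$ and $\alpha_i$ are mishandled.
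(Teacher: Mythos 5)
Your proof is correct and is essentially the paper's argument: the paper derives this theorem from its Lemma~3.2, which is exactly your quotient-matrix interlacing computation (nontrivial eigenvalue of the $2\times 2$ quotient via the trace and constant row sums, then tight interlacing forcing an equitable partition), only carried out for the adjacency matrix $A$ rather than the Laplacian $L=kI-A$. Since $G$ is $k$-regular this is an immaterial shift --- your quotient matrix is $kI_2$ minus the paper's, equitability with respect to $L$ and to $A$ coincide, and your eigenvalue bookkeeping in the equality case is handled correctly.
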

For regular graphs, the lower bound of \eqref{eq:Mohar} now follows from applying the lower bound of Theorem~\ref{thm:interlacing} to a set $S$ with $i(G)=i_G(S)$ because $1-\frac1{n}|S| \ge \frac 12$. Similarly, \eqref{eq:QKMmun} follows from applying the upper bound of Theorem~\ref{thm:interlacing} to any set $S$ of size $\lfloor \frac n2\rfloor$. The upper bound of \eqref{eq:Mohar} does not follow as straightforwardly as the other two bounds, requiring also a spectral partitioning argument.

Specialising further from regular graphs, we consider distance-regular graphs. A connected graph $G$ with diameter $D$ is called \emph{distance-regular} if there are constants $c_i, a_i, b_i$ such that for all $i\in\{0,\dots,D\}$ and all vertices $u$ and $v$ at distance $d(u, v)=i$, there are exactly $c_i$ neighbours of $v$ at distance $i-1$ from $u$, there are exactly $a_i$ neighbours of $v$ at distance $i$ from $u$, and exactly $b_i$ neighbours of $v$ at distance $i+1$ from $u$.

In a distance-regular graph, 
the number of vertices at distance exactly $i$ from a vertex $v$, is independent of the choice of $v$. The \emph{intersection numbers} $p_{ij}^h$ denote the number of vertices $w$ that are at distance $i$ from $u$ and $j$ from $v$ for a pair $u,v$ with $d(u,v) = h$, and are independent of the choice of vertices.

\section{Eigenvalue bounds for the isoperimetric number of graph powers}\label{sec:powers}

For a positive integer $t$, the $t$-th \emph{graph power} of a graph $G =(V, E)$, denoted by $G^t$, is the graph with vertex set $V$ where two distinct vertices are adjacent if there is a path in $G$ of length at most $t$ between them.
Graph powers help design efficient algorithms for specific combinatorial optimization problems, see for example
\cite{AB1995,BC2000}. In distributed computing, the $t$-th power of a graph $G$ represents the possible flow of information during $k$ rounds of communication in a distributed network of processors organized according to $G$ \cite{l92}.
The investigation of the behaviour of various parameters of powers of a fixed graph leads to many interesting problems, some of which are motivated by questions in information theory, communication complexity, Euclidian geometry and Ramsey theory, see \cite{Apowers,powersthesis} for an overview. There are multiple results on combinatorial parameters of graph powers, such as the chromatic number \cite{AM2002,H2009,KP2016}, the independence number \cite{ADFZ2023,acf2019}, the Shannon capacity \cite{ADF2025,AL2007v1}, and the rainbow connection number \cite{Basavaraju2014RainbowProducts}, among others. Less is known about spectral properties of the graph powers in terms of the spectral properties of the base graph  \cite{acfnz21,Das2013LaplacianGraph,JZ2017}.

Motivated by the above, we investigate the isoperimetric number of graph powers, with a main focus on deriving sharp spectral bounds  (Theorem~\ref{theorem:lowerboundi(G^t)} and Theorem~\ref{theorem:upperboundi(G^t)}) using the eigenvalues of the base graph $G$ instead of the eigenvalues of the power graph. Our bounds are based on the polynomial interlacing method developed in \cite{acf2019} and depend on a polynomial that can be chosen freely. For the case $t=2$ we find the polynomial that gives the strongest possible bound (Theorems~\ref{theorem:lowerboundi(G^2)closed} and \ref{theorem:upperboundi(G^2)closed}), and we provide linear programming (LP) methods to find the best bound for general $t$ (LP \eqref{eq:iso_LP_lower} and \eqref{eq:iso_LP_upper}).

Lower bounds on the expansion of $G^t$ were also investigated in \cite{isoperimetricgraphpowers}; a well-known operation to improve graph expansion is the $t$-th power of $G$, which has a natural correspondence to simulating the random walk on $G$ for $t$ steps. However this was done in a different setting, in particular for \emph{weighted graphs}: graphs (with self loops) together with a weight function $w:E(G) \to \R_{\ge 0}$ on the edges. Accordingly, there are weighted variants of the definitions of the adjacency matrix, graph power, and isoperimetric number. The \emph{weighted adjacency matrix} $W$ of a weighted graph has $W_{uv}=w(\{u,v\})$ whenever $u$ and $v$ are adjacent, and 0 otherwise. The \emph{weighted graph power} $G^t$ is the weighted graph with weighted adjacency matrix $W^t$. The \emph{weighted isoperimetric number} can be expressed as $$i_W(G)= \min\left\{\frac1{|S|} \sum_{u \in S, v \notin S} w(u,v)\colon\,S \subseteq V,\,|S|\le\frac n2\right\}.$$

The authors of \cite{isoperimetricgraphpowers} showed that for $1$-regular weighted graphs (meaning that the weights of the edges out of any vertex sum to 1) with all diagonal weights at least $\frac12$ it holds that
$$i_{W^t}(G^t) \ge \frac{1}{20}(1 - (1 - i_W(G))^{\sqrt{t}}).$$
However, the weighted graph power is automatically spectrally related to the base weighted graph. In our unweighted setting, the spectra of a graph $G$ and its graph power $G^t$ are not related to each other in general (see for example \cite{acfnz21,CHEN2019418,Das2013LaplacianGraph,DG2016,JZ2017}).  

\subsection{Bounds when the spectra of $G$ and $G^t$ are related}

In the special case where the spectra of $G$ and $G^t$ are related by a polynomial (that is, if there exists a polynomial $p \in \mathbb{R}[x]$ such that $A(G^t)=p(A)$), we can use the existing bounds \eqref{eq:Mohar} and \eqref{eq:QKMmun} to obtain bounds on the isoperimetric number of $G^t$ in terms of the spectrum of $G$ as follows. In particular, we can do this for distance-regular graphs, and, more generally, for \emph{partially distance-polynomial graphs} (see \cite{partially distance-polynomial}).

For a graph $G$ and a polynomial $p\in \R[x]$, we define the following parameters.
\begin{align*}
    \Lambda(p) &= \max_{2\le i \le n} p(\lambda_i),\\
    \lambda(p) &= \min_{2\le i \le n} p(\lambda_i).
\end{align*}

\begin{lemma}\label{lemma:power}
    Consider a regular graph $G$ on $n$ vertices such that there is a polynomial $p \in \mathbb{R}[x]$ with $p(A) = A(G^t)$. The following inequalities hold:
    \begin{align*}
    i(G^t) &\geq \frac{1}{2}( p(\lambda_1) - \Lambda(p)),\\
    i(G^t) &\le \sqrt{p(\lambda_1)^2-\Lambda(p)^2},\\
    i(G^t) &\le \frac{\lceil \frac n2 \rceil}{n}(p(\lambda_1)-\lambda(p)).
    \end{align*}
\end{lemma}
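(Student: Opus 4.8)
The plan is to transfer the entire Laplacian spectrum of $G^t$ onto the eigenvalues of $G$ through the polynomial $p$, and then invoke the classical bounds \eqref{eq:Mohar} and \eqref{eq:QKMmun} directly for the graph $G^t$. The first step is to observe that the hypothesis $p(A)=A(G^t)$ already forces $G^t$ to be regular: since $G$ is $k$-regular, the all-ones vector $\mathbf 1$ satisfies $A\mathbf 1=\lambda_1\mathbf 1$ with $\lambda_1=k$, hence $A(G^t)\mathbf 1=p(A)\mathbf 1=p(\lambda_1)\mathbf 1$. Thus every row of $A(G^t)$ sums to $p(\lambda_1)$, so $G^t$ is $p(\lambda_1)$-regular and $p(\lambda_1)$ is its largest adjacency eigenvalue.

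Next I would record the full spectrum. Writing $A=\sum_i\lambda_i E_i$ in its spectral decomposition gives $A(G^t)=p(A)=\sum_i p(\lambda_i)E_i$, so the adjacency eigenvalues of $G^t$ form the multiset $\{p(\lambda_i):1\le i\le n\}$. Because $G^t$ is $p(\lambda_1)$-regular, its Laplacian is $L(G^t)=p(\lambda_1)I-A(G^t)$, so its Laplacian eigenvalues form the multiset $\{p(\lambda_1)-p(\lambda_i):1\le i\le n\}$, all nonnegative with $0$ attained at $i=1$. A short but essential point is that $p$ need not be monotone, so I cannot read off $\mu_2(G^t)$ and $\mu_n(G^t)$ as $p(\lambda_1)-p(\lambda_2)$ and $p(\lambda_1)-p(\lambda_n)$; instead I minimise and maximise over $i\ge 2$. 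This yields exactly $\mu_2(G^t)=p(\lambda_1)-\max_{i\ge 2}p(\lambda_i)=p(\lambda_1)-\Lambda(p)$ and $\mu_n(G^t)=p(\lambda_1)-\min_{i\ge 2}p(\lambda_i)=p(\lambda_1)-\lambda(p)$, where for the latter I note that $\min_{i\ge 1}p(\lambda_i)=\min_{i\ge 2}p(\lambda_i)$ because $p(\lambda_1)$ is the maximum.

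With these identities the three inequalities drop out by substitution. For the first, the lower bound in \eqref{eq:Mohar} applied to $G^t$ gives $i(G^t)\ge \mu_2(G^t)/2=(p(\lambda_1)-\Lambda(p))/2$. For the second, the upper bound in \eqref{eq:Mohar} applied to $G^t$, whose largest degree is $d_1(G^t)=p(\lambda_1)$, gives $i(G^t)\le\sqrt{\mu_2(G^t)\bigl(2p(\lambda_1)-\mu_2(G^t)\bigr)}$, and the difference-of-squares factorisation $\mu_2(2p(\lambda_1)-\mu_2)=(p(\lambda_1)-\Lambda(p))(p(\lambda_1)+\Lambda(p))=p(\lambda_1)^2-\Lambda(p)^2$ produces the stated bound. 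For the third, bound \eqref{eq:QKMmun} for the regular graph $G^t$ gives $i(G^t)\le\frac{\lceil n/2\rceil}{n}\mu_n(G^t)=\frac{\lceil n/2\rceil}{n}(p(\lambda_1)-\lambda(p))$, using that $G$ and $G^t$ share the same vertex count $n$.

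There is no genuine obstacle in this argument; it is essentially bookkeeping built on the spectral mapping $A(G^t)=p(A)$. The only place demanding care is the non-monotonicity of $p$ flagged above, which is precisely why the parameters are defined via $\Lambda(p)$ and $\lambda(p)$ rather than via $p(\lambda_2)$ and $p(\lambda_n)$; overlooking this would attach the wrong eigenvalue indices to $\mu_2(G^t)$ and $\mu_n(G^t)$ and could produce a non-extremal, even invalid, bound.
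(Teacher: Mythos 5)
Your proof is correct and follows essentially the same route as the paper: establish that $G^t$ is $p(\lambda_1)$-regular via the all-ones eigenvector, identify its adjacency (hence Laplacian) spectrum as $\{p(\lambda_i)\}$, and apply \eqref{eq:Mohar} and \eqref{eq:QKMmun} to $G^t$. You merely spell out the details (the $\Lambda(p)$/$\lambda(p)$ extremization and the difference-of-squares step) that the paper leaves implicit.
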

\begin{proof}
    Since $G$ is regular, the all-ones vector is an eigenvector for the largest eigenvalue $\lambda_1$ of $G$. Since $p(A)=A(G^t)$, the all-ones vector is also an eigenvector of $G^t$, with eigenvalue $p(\lambda_1)$. Now $G^t$ is $p(\lambda_1)$-regular, and $p(\lambda_1)$ is the largest adjacency eigenvalue of $G^t$. The other adjacency eigenvalues of $G^t$ are $p(\lambda_i)$ for $i=2,\dots,n$. The result follows from Equations \eqref{eq:Mohar} and \eqref{eq:QKMmun} applied to $G^t$.
\end{proof}

\subsection{Bounds when the spectra of $G$ and $G^t$ are not related}\label{sec:unrelatedspectra}

In the more usual case where the spectra of $G$ and $G^t$ are not related, we need a different approach. In \cite{acf2019}, a method using interlacing of polynomials of the adjacency matrix of a graph was developed to generalize the inertia and Hoffman ratio bounds on the independence number of a graph to bounds on the independence number of a graph power. This polynomial interlacing method has also been used to obtain bounds of the chromatic number of a graph power \cite{acfnz21}. We apply the method to derive results (Theorems~\ref{theorem:lowerboundi(G^t)} and \ref{theorem:upperboundi(G^t)}) that generalize the first and third bounds of Lemma~\ref{lemma:power} to all power graphs. Additionally, Theorems~\ref{theorem:lowerboundi(G^t)} and \ref{theorem:upperboundi(G^t)} can be seen as an extension of the lower bound of \eqref{eq:Mohar} and as an extension of \eqref{eq:QKMmun} respectively. Since the upper bound of \eqref{eq:Mohar} (corresponding to the second bound of Lemma~\ref{lemma:power}) requires the spectral partitioning method, it is not directly suitable for generalization to general graph powers through the polynomial interlacing method.

To derive our main results in this section we need the following preliminary result.

\begin{lemma}\label{lemma:interlacing}
    Let $M$ be a symmetric $n\times n$-matrix with eigenvalues $\nu_1\ge \dots \ge \nu_n$, such that $M$ has constant row sum $\nu_1$. Let $S\subseteq \{1,\dots,n\}$ be a non-empty set of indices with $|S|\le \frac n2$, and write $S^c =\{1,\dots,n\} \setminus S$. Write
    $$i_M(S)=\frac 1{|S|}\sum_{u\in S} \sum_{v\in S^c} M_{uv}.$$
    The following inequality holds:
    \[
    \left ( 1-\frac {|S|}{n}\right )(\nu_1-\nu_2) \le i_M(S) \le \left (1 -\frac{|S|}{n}\right ) \left (\nu_1-\nu_n\right ).
    \]
\end{lemma}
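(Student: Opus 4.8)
The plan is to apply the interlacing of quotient matrices recalled in the preliminaries to the $2\times 2$ quotient matrix associated with the partition $\{S,S^c\}$; this is exactly the mechanism underlying Theorem~\ref{thm:interlacing}, now carried out for an arbitrary symmetric matrix with constant row sum instead of the adjacency matrix. First I would set $s=|S|$ and form the quotient matrix $B$ of $M$ with respect to $\{S,S^c\}$. By the definition of the quotient matrix and of $i_M(S)$, the off-diagonal entry $B_{12}$ is precisely $i_M(S)$, so it suffices to bound $B_{12}$.

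Next I would exploit the two structural hypotheses on $M$. The constant row sum $\nu_1$ forces $B_{11}+B_{12}=\nu_1$ and $B_{21}+B_{22}=\nu_1$, so $B$ itself has constant row sum $\nu_1$; hence $(1,1)^\top$ is an eigenvector of $B$ with eigenvalue $\nu_1$. Writing $\beta_1\ge\beta_2$ for the eigenvalues of $B$, interlacing gives $\beta_1\le\nu_1$, while $\nu_1$ being an eigenvalue of $B$ gives $\beta_1\ge\nu_1$, so $\beta_1=\nu_1$. Separately, the symmetry of $M$ yields $\sum_{u\in S,\,v\in S^c}M_{uv}=\sum_{u\in S^c,\,v\in S}M_{uv}$, which translates into $s\,B_{12}=(n-s)\,B_{21}$.

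Combining these, I would write $\beta_2=\operatorname{tr}(B)-\beta_1=B_{11}+B_{22}-\nu_1$, and then substitute $B_{11}=\nu_1-B_{12}$, $B_{22}=\nu_1-B_{21}$, and $B_{21}=\tfrac{s}{\,n-s\,}B_{12}$ to eliminate everything in favour of $B_{12}$. This gives $\beta_2=\nu_1-\tfrac{n}{\,n-s\,}B_{12}$, equivalently $i_M(S)=B_{12}=\bigl(1-\tfrac{s}{n}\bigr)(\nu_1-\beta_2)$. Finally, interlacing of the $2\times 2$ matrix $B$ with $M$ gives $\nu_n\le\beta_2\le\nu_2$, and substituting the two extreme values of $\beta_2$ into this expression for $i_M(S)$ produces the lower and upper bounds respectively.

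The computation is essentially mechanical once the quotient matrix is set up, so there is no real obstacle to overcome; the one point that needs care is the identification $\beta_1=\nu_1$ (as opposed to $\beta_2=\nu_1$), which is exactly where the hypothesis that the row sum equals the \emph{largest} eigenvalue $\nu_1$, and not merely some eigenvalue, is used. Note also that the bound $|S|\le\tfrac n2$ is not needed for the displayed two-sided estimate itself, since the factor $1-\tfrac{|S|}{n}$ is tracked exactly; it will matter only when the lemma is later specialised to extract $i(G^t)$.
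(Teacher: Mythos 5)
Your proposal is correct and follows essentially the same route as the paper's proof: form the $2\times2$ quotient matrix of $M$ with respect to $\{S,S^c\}$, use the constant row sum to identify $\nu_1$ as its largest eigenvalue, express the second eigenvalue via the trace and the relation $|S|\,B_{12}=(n-|S|)\,B_{21}$, and conclude by interlacing $\nu_n\le\beta_2\le\nu_2$. Your side remarks (the identification $\beta_1=\nu_1$ being where the hypothesis on the row sum is used, and $|S|\le\frac n2$ being needed only later) are accurate.
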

Lemma~\ref{lemma:interlacing} is a generalization of Theorem~\ref{thm:interlacing}, after applying it to the adjacency matrix of a regular graph, in which case $i_A(S)=i_G(S)$.
\begin{proof}
    Let $B$ be the quotient matrix of $M$ with respect to the partition $\{S,S^c\}$, then the eigenvalues of $M$ interlace those of $B$. Since $M$ has constant row sum $\nu_1$, also $B$ has constant row sum $\nu_1$, and so $\nu_1$ is an eigenvalue of $B$. Let $\beta$ be the other eigenvalue of $B$.
    
    Since the sum of eigenvalues is equal to the trace, we get
    \begin{align*}
        \beta&=\tr(B)-\nu_1=B_{1,1}+B_{2,2}-\nu_1=(\nu_1-B_{1,2}) + (\nu_1- B_{2,1}) -\nu_1\\
        &=\nu_1-(B_{2,1}+B_{1,2})=\nu_1-\frac n{n-|U|} B_{1,2},
    \end{align*}
    where in the last step we use $B_{2,1}=\frac{|S|}{n-|S|} B_{1,2}$ (which follows from the definition of the quotient matrix).
    
    By interlacing, $\beta\le \nu_1$, so that $\beta$ is the second largest eigenvalue of $B$. Hence, again by interlacing, we have
    $$\nu_2\ge \beta=\nu_1-\frac n{n-|S|} B_{1,2},$$
    from which the first inequality follows, and $\nu_n\le \beta$, from which the second inequality follows.
\end{proof}

Now we are ready to show the lower and upper bounds of the isoperimetric number of a graph power in terms of the spectrum of the base graph. Throughout the proofs, we use the following key observation.
\begin{observation}\label{obs:degree-distance}
    If $p$ is any polynomial, and $u$ and $v$ are vertices of $G$ at distance more than the degree of $p$, then $p(A)_{uv}=0$. This is because $A^i_{uv}$ counts the number of walks of length $i$ between $u$ and $v$; it is zero if $u$ and $v$ have distance more than $i$.
\end{observation}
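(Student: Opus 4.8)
The plan is to reduce everything to the standard combinatorial interpretation of the entries of powers of the adjacency matrix. First I would write $p(x) = \sum_{i=0}^{d} c_i x^i$, where $d = \deg p$, so that by linearity of the map $A \mapsto p(A)$ we have $p(A)_{uv} = \sum_{i=0}^{d} c_i (A^i)_{uv}$. It then suffices to show that each individual term vanishes, that is, that $(A^i)_{uv} = 0$ whenever $0 \le i \le d < d(u,v)$.

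The key step is the classical walk-counting lemma: for every nonnegative integer $i$, the entry $(A^i)_{uv}$ equals the number of walks of length exactly $i$ from $u$ to $v$ in $G$. I would establish this by induction on $i$. The base case $i = 0$ uses $A^0 = I$, since a walk of length $0$ from $u$ to $v$ exists if and only if $u = v$, matching the entries of the identity matrix. For the inductive step I would use the recursion $(A^{i+1})_{uv} = \sum_{w} (A^i)_{uw} A_{wv}$, which mirrors the decomposition of a walk of length $i+1$ from $u$ to $v$ into a walk of length $i$ from $u$ to some neighbour $w$ of $v$, followed by the final edge $wv$.

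With the lemma in hand, the conclusion is immediate. Any walk of length $i$ from $u$ to $v$ contains a path from $u$ to $v$ of length at most $i$, so its existence forces $d(u,v) \le i$. Contrapositively, if $d(u,v) > i$, then there is no such walk and hence $(A^i)_{uv} = 0$. Since we assume $d(u,v) > d \ge i$ for every index $i$ in the range $0 \le i \le d$, every term $(A^i)_{uv}$ in the expansion vanishes, and therefore $p(A)_{uv} = 0$.

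There is essentially no substantive obstacle here, as the statement is elementary; the content lies entirely in correctly invoking the walk-counting identity. The only point requiring minor care is the constant term $i = 0$, where $A^0 = I$ contributes on the diagonal only. This is handled uniformly by the convention that the unique walk of length $0$ from a vertex to itself accounts for the case $d(u,v) = 0$, so that the inductive argument and the distance bound apply consistently across all $i$, including $i = 0$.
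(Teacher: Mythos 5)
Your argument is correct and is essentially the same as the paper's own justification, which also rests on the fact that $(A^i)_{uv}$ counts walks of length $i$ and hence vanishes when $d(u,v)>i$; you merely spell out the linear expansion of $p(A)$ and the induction behind the walk-counting identity. No further comment is needed.
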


\begin{theorem}\label{theorem:lowerboundi(G^t)}
    Let $G$ be a regular graph on $n$ vertices with adjacency eigenvalues $\lambda_1 \ge \dots \ge \lambda_n$. Let $p\in \R_t[x]$, and define $W(p) = \max_{1\le i < j \le n} (p(A))_{ij}$ the maximum off-diagonal entry of $p(A)$. If $W(p)> 0$ and $p(\lambda_1)\ge \Lambda(p)$, then
    $$i(G^t) \ge \frac{p(\lambda_1) - \Lambda(p)}{2W(p)}.$$
\end{theorem}

\begin{proof}
    Since $p(\lambda_1)\ge \Lambda(p)$, we know that $\nu_1=p(\lambda_1)$ and $\nu_2=\Lambda(p)$, and that the matrix $p(A)$ has constant row sum $\nu_1=p(\lambda_1)$ because the all-ones vector is an eigenvector of $p(A)$ with eigenvalue $p(\lambda_1)$. We can now apply Lemma~\ref{lemma:interlacing} to $p(A)$.
    
    Let $S$ be a subset of vertices of $G$ such that $|S|\le \frac n2$ and
    $$i(G^t)=i_{G^t}(S)=\frac{|\partial_{G^t}(S)|}{|S|}.$$
    The first inequality of Lemma~\ref{lemma:interlacing} applied to $M=p(A)$ and $S$, gives
    $$i_{p(A)}(S) \ge \frac{p(\lambda_1)-\Lambda(p)}2.$$
    To complete the proof, note that $0\le W(p) i(G^t)$ and that
    $$i_{p(A)}(S) = \frac 1{|S|} \sum_{u\in S}\sum_{v\in S^c} (p(A))_{uv} \le W(p)\cdot \frac{|\partial_{G^t}(S)|}{|S|}= W(p) i_{G^t}(S)=W(p)i(G^t)$$
    since $(p(A))_{uv}=0$ if $d(u,v)> t$ by Observation~\ref{obs:degree-distance}, and $(p(A))_{uv}\leq W(p)$ otherwise, which is precisely when $(u,v)$ is part of the edge cut $(S,S^c)$ in $G^t$.
\end{proof}

Note that the first inequality of Lemma \ref{lemma:power}, when $\deg(p)\le t$, also follows from Theorem \ref{theorem:lowerboundi(G^t)}, for any polynomial $p$ with $p(A)=A(G^t)$ we have $W(p)=1$.

\begin{theorem}\label{theorem:upperboundi(G^t)}
    Let $G$ be a non-empty regular graph on $n$ vertices with adjacency eigenvalues $\lambda_1\ge \dots \ge \lambda_n$. Let $p\in \R_t[x]$, and define
    $$w(p)=\min\left\{(p(A))_{uv}\colon\,1\le d(u,v)\le t\right\}.$$
    If $w(p)>0$, then
    $$i(G^t) \le \frac{\lceil \frac n2 \rceil}n \cdot \frac{p(\lambda_1)-\lambda(p)}{w(p)}.$$
\end{theorem}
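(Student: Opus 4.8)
The plan is to mirror the proof of Theorem~\ref{theorem:lowerboundi(G^t)}, but now using the \emph{upper} inequality of Lemma~\ref{lemma:interlacing} applied to $M=p(A)$, together with a well-chosen set $S$. The first thing to settle is which eigenvalue of $p(A)$ plays the role of $\nu_1$ in the lemma. Unlike in Theorem~\ref{theorem:lowerboundi(G^t)}, here we are \emph{not} given $p(\lambda_1)\ge\Lambda(p)$; instead I would extract the needed information from the single hypothesis $w(p)>0$. Indeed, $w(p)>0$ forces every off-diagonal entry of $p(A)$ to be non-negative: by Observation~\ref{obs:degree-distance} we have $(p(A))_{uv}=0$ when $d(u,v)>t$, whereas $(p(A))_{uv}\ge w(p)>0$ when $1\le d(u,v)\le t$. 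Since the all-ones vector is a strictly positive eigenvector of $p(A)$ with eigenvalue $p(\lambda_1)$, the Perron--Frobenius theorem (applied to each connected component of $G$ separately, which covers the non-empty but possibly disconnected regular case) shows that $p(\lambda_1)$ is the largest eigenvalue $\nu_1$ of $p(A)$. Consequently the smallest eigenvalue is $\nu_n=\min_i p(\lambda_i)=\lambda(p)$, using $p(\lambda_1)\ge\lambda(p)$.

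With $\nu_1=p(\lambda_1)$ identified as simultaneously the constant row sum and the largest eigenvalue, Lemma~\ref{lemma:interlacing} applies to $p(A)$. I would then fix any subset $S$ with $|S|=\lfloor n/2\rfloor$ (so that $1\le|S|\le n/2$, using $n\ge2$ since $G$ is non-empty), this being the largest admissible size and hence the one that makes the factor $1-|S|/n=\lceil n/2\rceil/n$ as small as possible. The upper inequality of the lemma then yields
$$i_{p(A)}(S)\le\left(1-\frac{|S|}{n}\right)(\nu_1-\nu_n)=\frac{\lceil n/2\rceil}{n}\bigl(p(\lambda_1)-\lambda(p)\bigr).$$

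It remains to compare $i_{p(A)}(S)$ with $i_{G^t}(S)$, exactly as in Theorem~\ref{theorem:lowerboundi(G^t)} but with the inequality reversed. A pair $(u,v)$ with $u\in S$ and $v\in S^c$ contributes to $i_{p(A)}(S)$ only when $1\le d(u,v)\le t$ (otherwise the entry vanishes), which is precisely when $(u,v)$ lies in the edge cut $\partial_{G^t}(S)$; for every such pair $(p(A))_{uv}\ge w(p)$. Hence $i_{p(A)}(S)\ge w(p)\,i_{G^t}(S)$, and dividing by $w(p)>0$ gives $i_{G^t}(S)\le i_{p(A)}(S)/w(p)$. Chaining this with $i(G^t)\le i_{G^t}(S)$ produces the claimed bound.

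The main obstacle is the very first step: justifying that $p(\lambda_1)$ really is the top eigenvalue of $p(A)$, as this is what both licenses the upper interlacing inequality and pins down $\nu_n=\lambda(p)$. I expect the non-negativity-plus-Perron--Frobenius argument to be the cleanest route here, and it is genuinely needed, since without $p(\lambda_1)\ge\lambda(p)$ the quotient-matrix estimate would only give $p(\lambda_1)-\nu_n$ in place of $p(\lambda_1)-\lambda(p)$. The remaining steps are routine bookkeeping, the only care being the choice $|S|=\lfloor n/2\rfloor$ to recover the exact constant $\lceil n/2\rceil/n$.
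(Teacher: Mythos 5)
Your proposal follows essentially the same route as the paper: show that $p(\lambda_1)$ is the largest eigenvalue of $p(A)$ using the non-negativity forced by $w(p)>0$ together with Observation~\ref{obs:degree-distance}, apply the upper inequality of Lemma~\ref{lemma:interlacing} to a set of size $\lfloor n/2\rfloor$, and compare $i_{p(A)}(S)$ with $w(p)\,i_{G^t}(S)$. The only point to tighten is that $w(p)>0$ controls only the off-diagonal entries of $p(A)$, whose diagonal may still be negative, so before invoking Perron--Frobenius you should shift the constant coefficient of $p$ (which changes neither $w(p)$ nor $p(\lambda_1)-\lambda(p)$) to make the diagonal non-negative --- exactly the normalization the paper performs before running the same eigenvector-maximum argument.
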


\begin{proof}
    Since the all-ones vector is an eigenvector of $p(A)$ with eigenvalue $p(\lambda_1)$, we have that $p(A)$ has constant row sum $p(\lambda_1)$. In order to apply Lemma~\ref{lemma:interlacing}, we claim that this row sum $p(\lambda_1)$ is the largest eigenvalue of $p(A)$. Note that this statement does not change if we change the constant coefficient of $p$, so without loss of generality, we may assume that $p(A)$ has only non-negative values on the diagonal. Also off-diagonal, $p(A)$ is non-negative: if $u\ne v$ and $d(u,v) \le t$, then $p(A)_{uv} \ge w(p)>0$, and if $d(u,v) >t$, then $p(A)_{uv} =0$ by Observation~\ref{obs:degree-distance}. Let $\nu$ be any eigenvalue of $p(A)$ with eigenvector $x$, and let $j$ be such that $|x_j|$ is maximal. We get
    $$| \nu x_j|=\left | \sum_{i} p(A)_{ij} x_i \right |\le \sum_{i} p(A)_{ij} |x_i|\le \sum_{i} p(A)_{ij}|x_j|=p(\lambda_1) |x_j|,$$
    so $\nu \le |\nu|\le p(\lambda_1)$. We can now apply Lemma~\ref{lemma:interlacing} to $p(A)$.

    Let $S$ be any subset of vertices of $G$ with $|S|=\lfloor \frac n2 \rfloor$. The second inequality of Lemma~\ref{lemma:interlacing} applied to $M=p(A)$ and $S$, says
    $$i_{p(A)}(S) \le \frac {\lceil \frac n2 \rceil}n (p(\lambda_1)-\lambda(p)).$$
    To complete the proof, note that
    $$i_{p(A)}(S)=\frac 1 {|S|}\sum_{u\in S} \sum_{v\in S^c} (p(A))_{uv} \ge w(p)\cdot \frac{|\partial_{G^t}(S)|}{|S|}= w(p)i_{G^t}(S)\ge w(p) i(G^t),$$
    since all entries of $p(A)$ that do not correspond to an edge in $G^t$ are zero (because the degree of $p$ is at most $t$), and all entries of $p(A)$ corresponding to an edge $(u,v)$ in the edge cut $(S,S^c)$ of $G^t$ are at least $w(p)$.
\end{proof}

The third inequality of Lemma~\ref{lemma:power} also follows from Theorem~\ref{theorem:upperboundi(G^t)}, since for such a polynomial we have $w(p)=1$.

\begin{remark}
A generalization of both Theorem~\ref{theorem:lowerboundi(G^t)} and Theorem~\ref{theorem:upperboundi(G^t)} is possible if we include polynomials of degree higher than $t$; however, we then need the additional requirement on the polynomial $p$ that for vertices $u,v$ at distance more than $t$ we have that $p(A)_{uv}$ is non-positive for Theorem~\ref{theorem:lowerboundi(G^t)}, and non-negative for Theorem~\ref{theorem:upperboundi(G^t)}. 
\end{remark}

\subsection{Optimal polynomials in Theorem \ref{theorem:lowerboundi(G^t)} and Theorem \ref{theorem:upperboundi(G^t)} when $t=2$}

Theorems~\ref{theorem:lowerboundi(G^t)} and \ref{theorem:upperboundi(G^t)} are stated for general polynomials $p$. For the case $t=2$, we are able to optimize this $p$, and determine the best possible lower and upper bounds on the isoperimetric number of the graph square $G^2$ as taken from Theorems~\ref{theorem:lowerboundi(G^t)} and \ref{theorem:upperboundi(G^t)}.

\begin{theorem}\label{theorem:lowerboundi(G^2)closed}
    Let $G$ be a non-complete regular connected graph with adjacency eigenvalues $\lambda_1\ge \dots \ge \lambda_n$. Let $\Lambda$ be the maximum number of common neighbours of adjacent vertices of $G$, and let $M$ be the maximum number of neighbours of non-adjacent vertices of $G$. The following choices of $p$ are optimal for the bound of Theorem \ref{theorem:lowerboundi(G^t)} when $t=2$.
    \begin{enumerate}
        \item[i.] If $\Lambda-\lambda_n \le M+\lambda_2$, then $p=x^2+(M-\Lambda)x$, giving
        $$i(G^2) \ge \frac{(\lambda_1-\lambda_2)(\lambda_1+\lambda_2+M-\Lambda)}{2M}.$$
        \item[ii.] If $M+\lambda_2 \le \Lambda - \lambda_n \le \lambda_1$, then $p=x^2+(M-\Lambda)x$, giving
        $$i(G^2)\ge \frac{(\lambda_1-\lambda_n)(\lambda_1+\lambda_n+M-\Lambda)}{2M}.$$
        \item[iii.] If $\max(\lambda_1,M+\lambda_2) \le \Lambda-\lambda_n$, then $p=x^2-(\lambda_2+\lambda_n)x$, giving
        $$i(G^2)\ge \frac{(\lambda_1-\lambda_2)(\lambda_1-\lambda_n)}{2(\Lambda-\lambda_2-\lambda_n)}.$$
    \end{enumerate}
\end{theorem}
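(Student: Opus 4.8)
The plan is to reduce the optimization over all $p\in\R_2[x]$ to a one–parameter problem and then carry out an elementary piecewise analysis. First I would exploit the two invariances of the bound in Theorem~\ref{theorem:lowerboundi(G^t)}: adding a constant to $p$ leaves $p(\lambda_1)-\Lambda(p)$ unchanged (the constant cancels) and alters $p(A)$ only on the diagonal, hence does not affect $W(p)$; and multiplying $p$ by a positive scalar scales numerator and denominator equally. Thus the bound depends only on the sign of the leading coefficient and on one ratio, and it suffices to treat the family $p_\beta(x)=x^2+\beta x$ (leading coefficient $+1$, single parameter $\beta$), together with the degenerate linear normalization $p(x)=x$ (which gives the value $(\lambda_1-\lambda_2)/2$) and the negative normalization $p(x)=-x^2+\beta x$.

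Second, for $p_\beta$ I would write both ingredients of the bound in terms of the graph. Since $(p_\beta(A))_{uv}=(A^2)_{uv}+\beta A_{uv}$ for $u\neq v$, an adjacent pair contributes its number of common neighbours plus $\beta$ and a non-adjacent pair contributes its number of common neighbours, so $W(p_\beta)=\max(\Lambda+\beta,\,M)$. For the numerator, the parabola $x^2+\beta x$ compares $p_\beta(\lambda_2)$ with $p_\beta(\lambda_n)$ according to the sign of $\lambda_2+\lambda_n+\beta$, so $\Lambda(p_\beta)=p_\beta(\lambda_2)$ for $\beta\ge-(\lambda_2+\lambda_n)$ and $\Lambda(p_\beta)=p_\beta(\lambda_n)$ otherwise; correspondingly $p_\beta(\lambda_1)-\Lambda(p_\beta)$ equals $(\lambda_1-\lambda_2)(\lambda_1+\lambda_2+\beta)$ or $(\lambda_1-\lambda_n)(\lambda_1+\lambda_n+\beta)$. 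Hence the bound $f(\beta)$ is a piecewise function with two breakpoints, $\beta_1:=-(\lambda_2+\lambda_n)$ (numerator switch) and $\beta_2:=M-\Lambda$ (denominator switch).

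Third, the core step is to locate the maximum of $f$. On each piece $f$ is either linear in $\beta$ with positive slope or a Möbius function whose monotonicity is governed by the sign of $\Lambda-\lambda_1-\lambda_2$ or of $\Lambda-\lambda_1-\lambda_n$. The key structural input is the inequality $\Lambda<\lambda_1+\lambda_2$, valid for every connected non-complete regular graph: adjacent vertices share at most $\lambda_1-1$ common neighbours (the graph is $\lambda_1$-regular), while $\lambda_2>-1$, since $\lambda_2\le-1$ would force all of $\lambda_2,\dots,\lambda_n\le-1$ and, via $\sum_i\lambda_i=0$, the complete graph. This makes the rightmost piece strictly decreasing and, together with the comparison $f\ge(\lambda_1-\lambda_2)/2$ at the optimum, also discards the $p(x)=x$ and $p(x)=-x^2+\beta x$ normalizations. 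The order of $\beta_1,\beta_2$, i.e.\ the sign of $(\Lambda-\lambda_n)-(M+\lambda_2)$, separates case i from cases ii and iii, while the sign of $\Lambda-\lambda_1-\lambda_n$, i.e.\ whether $\Lambda-\lambda_n\le\lambda_1$, controls the monotonicity of the middle piece and separates case ii from case iii. In each regime the maximum lands at $\beta=\beta_2=M-\Lambda$ (cases i, ii) or at $\beta=\beta_1=-(\lambda_2+\lambda_n)$ (case iii); substituting gives precisely the three stated bounds, and one checks there that $W(p)>0$ and $p(\lambda_1)\ge\Lambda(p)$ hold, using $M\ge 1$ (a non-complete connected graph has a pair at distance $2$).

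I expect the main obstacle to be the bookkeeping of this piecewise optimization: correctly ordering the two breakpoints, tracking which of the four numerator/denominator branch combinations is active, and checking continuity at $\beta_1$ and $\beta_2$ so that the global maximum can only occur at a breakpoint. The inequality $\Lambda<\lambda_1+\lambda_2$ is what makes the analysis tractable, and establishing it—together with confirming that a positive leading coefficient is optimal—is the one genuinely non-mechanical part of the argument.
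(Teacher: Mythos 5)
Your proposal is correct and follows essentially the same route as the paper's proof: normalize to $p=x^2+\beta x$ (discarding $a\le 0$ via comparison with $p=x$), compute $W(p_\beta)=\max(\Lambda+\beta,M)$, track the numerator breakpoint $-(\lambda_2+\lambda_n)$ and denominator breakpoint $M-\Lambda$, and use $\Lambda\le\lambda_1-1<\lambda_1+\lambda_2$ to make the rightmost piece decreasing, landing the optimum at the breakpoints exactly as in cases i--iii. Your explicit justification that $\lambda_2>-1$ for a connected non-complete regular graph, and your value $(\lambda_1-\lambda_2)/2$ for the linear polynomial $p=x$, are both correct (the paper's $(\lambda_1-\lambda_n)/n$ there appears to be a typo).
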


\begin{proof}
    Let $p=ax^2+bx+c \in \R_2[x]$. Let $i(p)=\frac{p(\lambda_1) - \Lambda(p)}{2W(p)}$ denote the bound from Theorem \ref{theorem:lowerboundi(G^t)}. We optimize $i(p)$ for the parameters $a$, $b$, and $c$, provided that $p(\lambda_1)\ge \Lambda(p)$ and $W(p)>0$. First of all, note that scaling by a positive number and shifting does not change the value of $i(p)$, so that we may assume without loss of generality that $c=0$ and $a\in \{-1,0,1\}$. We make a case distinction according to the value of $a$.
    \begin{itemize}
    \item If $a=0$, then by $W(p)>0$ and scaling, we may assume that $b=1$, and so $p=x$ and $i(p)=(\lambda_1-\lambda_n)/n$.
    \item If $a=-1$, then $p=-x^2+bx$. We show that this case gives bounds that are worse than the bound from the case $a=0$. To show this, we investigate the numerator and denominator of $i(p)$.
    
    For the numerator of $i(p)$ we note the following. For any $i,j\in\{1,\dots,n\}$, we have that $p(\lambda_i)\ge p(\lambda_j)$ if and only if $b\ge \lambda_i+\lambda_j$ or $\lambda_i=\lambda_j$. Since $G$ is connected, $\lambda_1>\lambda_2$ by the Perron-Frobenius Theorem \cite{spectra}. Thus, in order for $p(\lambda_1)\ge \Lambda(p)$, by the above we need $b\ge \lambda_1+\lambda_2$, and consequently $\Lambda(p)=p(\lambda_2)$, again by the above. Now $p(\lambda_1)-\Lambda(p)=p(\lambda_1)-p(\lambda_2)=(\lambda_1-\lambda_2)(b-\lambda_1-\lambda_2)$.

    For the denominator of $i(p)$, we have the following. Write $\eta$ for the minimum number of common neighbours of two adjacent vertices of $G$. In order to have $W(p)>0$ we need $b>\eta$ because $A^2$ only has non-negative entries. In this case $W(p)=b-\eta$.
    So, for $p(\lambda_1)\ge \Lambda(p)$ and $W(p)>0$, we need that $b\ge\lambda_1+\lambda_2$ and $b>\eta$, in which case
    $$i(p)=\frac{(\lambda_1-\lambda_2)(b-\lambda_1-\lambda_2)}{2(b-\eta)}=\frac{\lambda_1-\lambda_2}2 \left (1+\frac{\eta-\lambda_1-\lambda_2}{b-\eta} \right).$$
    Note that $\eta < \lambda_1-1 < \lambda_1+\lambda_2$ since $G$ is not complete, so $i(p)< (\lambda_1-\lambda_2)/2$, implying that the bounds coming from polynomials in this case are worse than the bound from the polynomial $p=x$ as in the above case $a=0$.
    
    \item If $a=1$, then $p=x^2+bx$. We again investigate the numerator and denominator of $i(p)$.

    For the numerator of $i(p)$, again if $i,j\in\{1,\dots,n\}$, then $p(\lambda_i)\ge p(\lambda_j)$ if and only if $b\ge -\lambda_i-\lambda_j$ or $\lambda_i=\lambda_j$. In order to have $p(\lambda_1)\ge \Lambda(p)$, we therefore need that $b\ge -\lambda_1-\lambda_n$. Furthermore, by convexity of the parabola $p=x^2+bx$, we have $\Lambda(p)\in \left\{p(\lambda_2),p(\lambda_n)\right\}$, and if $b\ge -\lambda_2-\lambda_n$, then $\Lambda(p)=p(\lambda_2)$. If instead $-\lambda_1-\lambda_n \le b \le -\lambda_2-\lambda_n$, then $\Lambda(p)=p(\lambda_n)$. Summarizing, we have 
    \[p(\lambda_1)-\Lambda(p)=\begin{cases}
        $negative$ &$if $b<-\lambda_1-\lambda_n,\\
        (\lambda_1-\lambda_n)(b+\lambda_1+\lambda_n) &$if $-\lambda_1-\lambda_n \le b \le -\lambda_2 - \lambda_n,\\
        (\lambda_1-\lambda_2)(b+\lambda_1+\lambda_2) &$if $-\lambda_2-\lambda_n \le b.
    \end{cases}\]

    For the denominator of $i(p)$, note that $W(p)=\max(\Lambda+b,M)$. Since $G$ is non-complete and connected, $M>0$, so $W(p)>0$ is automatically satisfied. We now have
    \[W(p)=\begin{cases}
        M &$if $b\le M-\Lambda,\\
        \Lambda+b &$if $M-\Lambda\le b.
    \end{cases}\]

    To optimize the value of $i(p)$, we look at the behaviour of $i(p)$ on three (possibly empty) key intervals. Recall that $b \ge -\lambda_1-\lambda_n$.
    \begin{enumerate}
    \item If $-\lambda_1-\lambda_n\le b \le M-\Lambda$, then the denominator of $i(p)$ is constant and equal to $M$. The numerator is increasing, so $i(p)$ is increasing.
    \item If $\max(-\lambda_1-\lambda_n,M-\Lambda) \le b \le -\lambda_2-\lambda_n$, then
    $$i(p)=\frac{(\lambda-\lambda_n)(b+\lambda_1+\lambda_n)}{2(b+\Lambda)}=\frac{\lambda_1-\lambda_n}{2}\left (1+\frac{\lambda_1+\lambda_n-\Lambda}{b+\Lambda}\right ).$$
    This is increasing if $\Lambda>\lambda_1+\lambda_n$, constant if $\Lambda=\lambda_1+\lambda_n$, and decreasing if $\Lambda< \lambda_1+\lambda_n$.
    \item If $b\ge \max(-\lambda_2-\lambda_n,M-\Lambda)$, then
    $$i(p)=\frac{(\lambda_1-\lambda_2)(b+\lambda_1+\lambda_2)}{2(b+\Lambda)}=\frac{\lambda_1-\lambda_2}2 \left (1+\frac{\lambda_1+\lambda_2-\Lambda}{b+\Lambda} \right).$$
    This is decreasing, because by regularity and non-completeness $\Lambda\le \lambda_1-1<\lambda_1+\lambda_2$.
    \end{enumerate}
    To complete the proof, we now make the following case distinction.
    \begin{itemize}
        \item If $-\lambda_2-\lambda_n \le M-\Lambda$, then by observations 1 and 3 we know that $i(p)$ is increasing for $b\le M-\Lambda$, and decreasing for $M-\Lambda\le b$, so that the optimal value for $b$ is $M-\Lambda$, giving the bound
        $$i(G^2)\ge i(p) = \frac{(\lambda_1-\lambda_2)(\lambda_1+\lambda_2+M-\Lambda)}{2M},$$
        which is greater than $i(x)$ because $\Lambda <\lambda_1+\lambda_2$.
            \item If $M-\Lambda\le -\lambda_2-\lambda_n$ and $\lambda_1+\lambda_n \le \Lambda$, then by observations 1, 2, and 3 the optimal value of $b$ is $-\lambda_2-\lambda_n$, giving the bound
            $$i(G^2)\ge i(p) = \frac{(\lambda_1-\lambda_2)(\lambda_1-\lambda_n)}{2(\Lambda-\lambda_2-\lambda_n)},$$
            which is again greater than $i(x)$ because $\Lambda<\lambda_1+\lambda_2$.
            \item Lastly, if $M-\Lambda \le -\lambda_2-\lambda_n$ and $\Lambda \le \lambda_1+\lambda_n$, then we must have $-\lambda_1-\lambda_n \le M-\Lambda$, and so by observations 1, 2, and 3, the optimal value of $b$ is $M-\Lambda$, giving the bound
            $$i(G^2)\ge i(p)=\frac{(\lambda_1-\lambda_n)(\lambda_1+\lambda_n+M-\Lambda)}{2M}.$$
            Again, this is greater than $i(x)$, because $\Lambda\le \lambda_1+\lambda_n$ and $\lambda_1-\lambda_2 \le \lambda_1-\lambda_n$.
    \end{itemize}
    \end{itemize}
    This concludes the proof.
\end{proof}

For the best possible upper bound, we show the following result, which may be of independent interest.
 
\begin{proposition}\label{proposition:nu<=}
    Let $G$ be a non-empty graph with adjacency eigenvalues $\lambda_1\ge \dots \ge \lambda_n$. If $\eta$ is the minimum number of common neighbours of two adjacent vertices, then $\eta \le \lambda_1+\lambda_n$.
\end{proposition}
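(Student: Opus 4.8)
The plan is to study the matrix $M = A^2 - \eta A = p(A)$, where $p(x) = x^2 - \eta x$, whose eigenvalues are therefore $p(\lambda_1),\dots,p(\lambda_n)$. The first step is to observe that $M$ is \emph{entrywise} nonnegative. Indeed, $(A^2)_{uv}$ counts walks of length two, so the diagonal entries are $M_{uu} = (A^2)_{uu} = d_u \ge 0$, while for $u \neq v$ the entry $(A^2)_{uv}$ equals the number of common neighbours of $u$ and $v$; this is at least $\eta$ when $u$ and $v$ are adjacent (by the very definition of $\eta$) and at least $0$ otherwise, so $M_{uv} \ge 0$ in all cases.

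The crux of the argument, and the step I expect to be the main obstacle, is to show that the largest eigenvalue of $M$ equals $p(\lambda_1)$. This is not automatic: since $p$ is a convex parabola, the maximum of $p$ over the set $\{\lambda_1,\dots,\lambda_n\}$ is a priori attained at $\lambda_1$ or at $\lambda_n$, and I must rule out the latter. Here the entrywise nonnegativity of $M$ is decisive. For a symmetric nonnegative matrix the spectral radius equals the largest eigenvalue and is attained by a nonnegative eigenvector $z$ (Perron--Frobenius); equivalently, since $x^\top M x = \sum_{u,v} M_{uv} x_u x_v \le |x|^\top M |x|$, the Rayleigh quotient of $M$ is maximised at some $z \ge 0$, which is then a top eigenvector. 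On the other hand, assuming $G$ connected, the Perron eigenvector $\phi$ of $A$ is strictly positive and satisfies $M\phi = p(\lambda_1)\phi$. If the top eigenvalue of $M$ were different from $p(\lambda_1)$, then $z$ would be orthogonal to $\phi$, forcing $\sum_u z_u \phi_u = 0$ with every term nonnegative and $\phi_u > 0$; hence $z = 0$, a contradiction. Therefore $\lambda_{\max}(M) = p(\lambda_1)$.

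Granting this, the conclusion is immediate: $p(\lambda_n)$ is an eigenvalue of $M$, so $p(\lambda_n) \le p(\lambda_1)$, that is $\lambda_n^2 - \eta\lambda_n \le \lambda_1^2 - \eta\lambda_1$. Rearranging gives $(\lambda_1-\lambda_n)(\lambda_1+\lambda_n) \ge \eta(\lambda_1 - \lambda_n)$, and since a graph with an edge satisfies $\lambda_1 > 0 > \lambda_n$, we may divide by $\lambda_1 - \lambda_n > 0$ to obtain $\eta \le \lambda_1 + \lambda_n$. Finally, to remove the connectedness assumption I would pass to the connected component $H$ of $G$ attaining $\lambda_n(G)$; as $G$ is non-empty we have $\lambda_n(G) < 0$, so $H$ contains an edge and the connected case applies to it. Using $\eta(G) \le \eta(H)$, $\lambda_1(H) \le \lambda_1(G)$ and $\lambda_n(H) = \lambda_n(G)$, we conclude $\eta(G) \le \eta(H) \le \lambda_1(H) + \lambda_n(H) \le \lambda_1(G) + \lambda_n(G)$, which completes the proof.
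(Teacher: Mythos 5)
Your proof is correct and follows essentially the same route as the paper: both arguments make a quadratic polynomial $p(A)=A^2-cA$ entrywise nonnegative using the definition of $\eta$, then use a Perron--Frobenius argument to place the largest eigenvalue of $p(A)$ at $p(\lambda_1)$, forcing $p(\lambda_n)\le p(\lambda_1)$ and hence the bound, with the same reduction to the connected component carrying $\lambda_n$ in the disconnected case. The only differences are in execution: the paper takes $c=\lambda_1+\lambda_n$ and argues by contradiction via strict positivity, primitivity and simplicity of the Perron root, whereas you take $c=\eta$ and argue directly, needing only that a symmetric nonnegative matrix admits a nonnegative top eigenvector, which is marginally more economical.
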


\begin{proof}
    Suppose, by contradiction, that $\eta > \lambda_1+\lambda_n$.

    First, assume that $G$ is connected. Consider the polynomial $p(x)=x^2-(\lambda_1+\lambda_n)x$, and note that $p(\lambda_1)=p(\lambda_n)$. Note that $p(A)$ is a non-negative matrix: if $u=v$, then $p(A)_{uv}=\deg(u)>0$, if $u\ne v$ and $u\not \sim v$, then $p(A)_{uv}=(A^2)_{uv}\ge 0$, and if $u\sim v$, then $p(A)_{uv}=(A^2)_{uv}-(\lambda_1+\lambda_n)\ge \eta -\lambda_1-\lambda_n >0$. Furthermore, note that $p(A)^m$ is a positive matrix if $m$ is at least the diameter of $G$, so that $p(A)$ is primitive. By the Perron-Frobenius Theorem \cite{spectra}, the eigenvalue $p(\lambda_1)$ for the all-ones eigenvector of $p(A)$ must be strictly larger in absolute value than any other eigenvalue of $p(A)$. In particular $p(\lambda_1)>p(\lambda_n)$, a contradiction.

    Next, if $G$ is disconnected, then let $C$ be a connected component of $G$ that has smallest eigenvalue $\lambda_n$. Such a connected component exists because the spectrum of $G$ is equal to the disjoint union of the spectra of its connected components. Note that the largest eigenvalue $\lambda_{\max}(C)$ of $C$ is bounded above by $\lambda_1$. Applying the above argument to $C$, we get that there are two adjacent vertices in $C$ that have at most $\lambda_{\max}(C)+\lambda_n\le \lambda_1+\lambda_n$ common neighbours. The result now follows for $G$.
\end{proof}

\begin{theorem}\label{theorem:upperboundi(G^2)closed}
    Let $G$ be a regular graph with adjacency eigenvalues $\lambda_1\ge \dots \ge \lambda_n$ that is not a disjoint union of complete graphs. Let $\eta$ be the minimum number of common neighbours of adjacent vertices, and let $\xi$ be the minimum number of common neighbours of two vertices at distance 2 in $G$. Let $\lambda_i$ be the eigenvalue of $G$ that is closest to $(\eta-\xi)/2$, then the optimal polynomial $p$ from Theorem~\ref{theorem:upperboundi(G^t)} is $p(x)=x^2+(\xi-\eta)x$, giving
    $$i(G^2) \le \frac{\lceil \frac n 2 \rceil}{n} \cdot \frac{(\lambda_1-\lambda_i)(\lambda_1+\lambda_i+\xi-\eta)}{\xi}.$$
\end{theorem}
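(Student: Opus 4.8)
The plan is to mirror the optimization carried out in the proof of Theorem~\ref{theorem:lowerboundi(G^2)closed}, but now \emph{minimizing} the upper bound
\[
u(p)=\frac{\lceil \frac n2\rceil}{n}\cdot\frac{p(\lambda_1)-\lambda(p)}{w(p)}
\]
from Theorem~\ref{theorem:upperboundi(G^t)} over all admissible $p\in\R_2[x]$. First I would record the two invariances: adding a constant to $p$ shifts $p(\lambda_1)$ and $\lambda(p)$ by the same amount and leaves every off-diagonal entry of $p(A)$ (hence $w(p)$) untouched, while multiplying $p$ by a positive scalar scales numerator and denominator equally. Thus I may assume $p=ax^2+bx$ with $a\in\{-1,0,1\}$. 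Writing $p(A)_{uv}=a\,(A^2)_{uv}+b$ on an edge and $p(A)_{uv}=a\,(A^2)_{uv}$ on a pair at distance $2$, and using that $G$ is not a disjoint union of complete graphs (so distance-$2$ pairs exist and have at least one common neighbour), the choices $a=0$ and $a=-1$ force $w(p)\le 0$. Hence $a=1$ and $p=x^2+bx$.

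For this $p$ the minimum defining $w(p)$ splits according to distance: over edges it equals $\eta+b$ and over distance-$2$ pairs it equals $\xi$, so $w(p)=\min(\eta+b,\xi)$, which is positive exactly when $b>-\eta$ (recall $\xi\ge 1$). The numerator $N(b)=p(\lambda_1)-\lambda(p)$ has $\lambda(p)=\min_{2\le i\le n}p(\lambda_i)$, attained at the eigenvalue $\lambda_i$ (with $i\ge 2$) closest to the vertex $-b/2$ of the parabola. Since each $p(\lambda_i)=\lambda_i^2+b\lambda_i$ is affine in $b$, the function $\lambda(p)$ is concave in $b$, so $N(b)$ is convex and non-decreasing, with local slope $\lambda_1-\lambda_i\ge 0$ on each piece where the minimizing index is constant.

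I would then minimize $u(b)$ on the two intervals cut out by $\min(\eta+b,\xi)$. On $b\ge\xi-\eta$ the denominator is the constant $\xi$ and $N$ is non-decreasing, so $u$ is non-decreasing there and is minimized at $b=\xi-\eta$. On $-\eta<b\le\xi-\eta$ the denominator is $\eta+b$, and on each piece
\[
\frac{d}{db}\left(\frac{N(b)}{\eta+b}\right)=\frac{(\lambda_1-\lambda_i)\left(\eta-\lambda_1-\lambda_i\right)}{(\eta+b)^2}.
\]
Here Proposition~\ref{proposition:nu<=} is the crucial input: $\eta\le\lambda_1+\lambda_n\le\lambda_1+\lambda_i$ makes the second factor $\le 0$, so the ratio is non-increasing across all pieces and is again minimized at $b=\xi-\eta$. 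Combining the two intervals pins the optimum at $b=\xi-\eta$, where $w(p)=\xi$ and $\lambda(p)=p(\lambda_i)$ with $\lambda_i$ the eigenvalue closest to $(\eta-\xi)/2$; substituting and factoring $p(\lambda_1)-p(\lambda_i)=(\lambda_1-\lambda_i)(\lambda_1+\lambda_i+\xi-\eta)$ yields the claimed bound.

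The main obstacle is the interval $b\le\xi-\eta$, where both numerator and denominator move and $\lambda(p)$ is only piecewise differentiable because the minimizing eigenvalue changes; the concavity of $\lambda(p)$ together with the uniform sign supplied by Proposition~\ref{proposition:nu<=} is exactly what lets one control the derivative simultaneously on every piece. Two minor points remain to be checked. First, $N(b)\ge 0$ throughout, which is automatic since Theorem~\ref{theorem:upperboundi(G^t)} guarantees $u(p)\ge i(G^2)\ge 0$ whenever $w(p)>0$. Second, the unrestricted ``eigenvalue closest to $(\eta-\xi)/2$'' in the statement really coincides with the minimizer over $i\ge 2$: since $\lambda_1+\lambda_2\ge\lambda_1+\lambda_n\ge\eta>\eta-\xi$, we get $\lambda_2>(\eta-\xi)-\lambda_1$, so $\lambda_2$ is strictly closer to $(\eta-\xi)/2$ than $\lambda_1$, and the closest eigenvalue indeed lies among $\lambda_2,\dots,\lambda_n$.
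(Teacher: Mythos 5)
Your proposal is correct and follows essentially the same route as the paper's proof: normalize to $p=x^2+bx$ after ruling out $a\le 0$ via a distance-2 pair, write $w(p)=\min(\eta+b,\xi)$, track the piecewise-affine numerator through the eigenvalue closest to the vertex $-b/2$, and use Proposition~\ref{proposition:nu<=} to show the ratio is non-increasing for $b\le\xi-\eta$ and non-decreasing beyond, pinning the optimum at $b=\xi-\eta$. Your derivative computation on each piece is exactly the paper's rewriting $i(p)=(\theta_0-\theta_j)\bigl(1+\tfrac{\theta_0+\theta_j-\eta}{b+\eta}\bigr)$, and your closing check that the closest eigenvalue is not $\lambda_1$ mirrors the paper's observation that $\lambda_n$ is closer to $-b/2$ than $\lambda_1$.
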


\begin{proof}
In order to prove this theorem we will use Proposition~\ref{proposition:nu<=}. Let $p=ax^2+bx+c \in \R_2[x]$. Write $i(p)=\frac{p(\lambda_1)-\lambda(p)}{w(p)}$ so that the bound from Theorem~\ref{theorem:upperboundi(G^t)} is $\lceil n/2 \rceil i(p)/n$. We minimize $i(p)$ for the parameters $a$, $b$, and $c$, provided that $w(p)>0$. First of all, note that scaling by a positive number and shifting does not change the value of $i(p)$, so that we may assume without loss of generality that $c=0$ and $a\in \{-1,0,1\}$. However, if $a\le 0$, then let $u$ and $v$ be two vertices at distance 2 (which exist because $G$ is not a disjoint union of complete graphs), then $w(p)\le p(A)_{uv}=a\cdot |N(u) \cap N(v)| \le 0$, which is not allowed. Therefore, we only have to consider polynomials of the form $p=x^2+bx$ and minimize $i(p)$ for the parameter $b$.

    For the denominator $w(p)$ of $i(p)$, note that
    \[
    w(p)=\begin{cases}
        \eta+b &$if $b\le\xi-\eta,\\
        \xi &$if $\xi -\eta \le b.
    \end{cases}
    \]
    In order for $w(p)>0$, we therefore need that $b> -\eta$.

    For the numerator $p(\lambda_1)-\lambda(p)$ of $i(p)$, note that since $p(x)=x^2+bx$ defines a convex parabola with minimum attained at $-b/2$, we know that $\lambda(p)$ is equal to $p(\lambda_j)$ such that $\lambda_j$ is the eigenvalue of $G$ that is closest to $-b/2$. By Proposition~\ref{proposition:nu<=} and $b> -\eta$, we know that $-b/2< (\lambda_1+\lambda_n)/2$, so that $\lambda_n$ is closer to $-b/2$ than $\lambda_1$, and so $j\ne 1$. Write $\theta_0>\theta_1>\dots>\theta_d$ for the $d+1$ distinct eigenvalues of $G$, and consider the intervals $I_j$ (for $j=1,\dots,d$) defined by
    \[
    I_j =\begin{cases}
        [-(\theta_{j-1}+\theta_j),-(\theta_j+\theta_{j+1})] &$if $1\le j \le d-1,\\
        [-(\theta_{d-1}+\theta_d),\infty) &$if $j=d.
    \end{cases}
    \]
    Then $b\in I_j$ if and only if $\lambda(p)=p(\theta_j)$. On the interval $I_j$, the numerator of $i(p)$ is equal to
    $$p(\theta_0)-\lambda(p)=p(\theta_0)-p(\theta_j)=(\theta_0-\theta_j)(\theta_0+\theta_j+b).$$
    It follows that $p(\theta_0)-\lambda(p)$ is increasing in $b$.

    We now show that $i(p)$ is decreasing on $(-\eta , \xi-\eta]$ and increasing on $[\xi-\eta,\infty)$, so that the optimal value for $b$ is $\xi-\eta$. If $b\ge \xi-\eta$, then the denominator of $i(p)$ is constant, while the numerator of $i(p)$ is increasing, so that $i(p)$ is increasing.

    Otherwise, for $-\eta < b \le \xi -\eta$ we have $w(p)=\eta+b$. Let $j\in\{1,\dots,d\}$. If $I_j$ intersects $(-\eta,\xi-\eta]$, then on this intersection we have
    $$i(p)=\frac{(\theta_0-\theta_j)(b+\theta_0+\theta_j)}{b+\eta}=(\theta_0-\theta_j) \left ( 1+\frac{\theta_0+\theta_j-\eta}{b+\eta} \right ).$$
    By Proposition~\ref{proposition:nu<=} this is a decreasing function. The result follows.
\end{proof}

\subsection{Optimization of the bounds in Theorem \ref{theorem:lowerboundi(G^t)} and Theorem \ref{theorem:upperboundi(G^t)}}

Next, we formulate Theorem~\ref{theorem:lowerboundi(G^t)} as a linear program which finds an optimal polynomial for any value of~$t$. We may scale and translate~$p_t$ without changing the value of the bound. Assume therefore without loss of generality that~$W(p_t)=\frac{1}{2}$ and~$\Lambda(p_t)=0$. Note that~$W(p_t)>0$ by assumption, so the scaling does not flip the sign of the bound. For every $u,v\in V$ and~$\ell\in[d]$, assume that~$W(p_t) = p_t(A)_{uv}$,~$0=\Lambda(p_t) = p_t(\theta_\ell)$ and solve LP~\eqref{eq:iso_LP_lower}. The maximum of these~$d\binom{n}{2}$ solutions equals the best possible bound that can be obtained by Theorem~\ref{theorem:lowerboundi(G^t)}. 

\begin{equation}
\label{eq:iso_LP_lower}
\boxed{
\begin{array}{rl}
{\tt maximize} & \sum_{i=0}^t a_i\cdot \theta_0^i \\
{\tt subject\ to} & \sum_{i = 0}^t a_i\cdot (A^i)_{wy} \leq \frac{1}{2},\quad  w,y \in V(G), w \neq y\\
 & \sum_{i = 0}^t a_i\cdot (A^i)_{uv} = \frac{1}{2} \\
& \sum_{i = 0}^t a_i\cdot \theta_\ell^i = 0 \\
 & \sum_{i = 0}^t a_i\cdot \theta_0^i \geq 0,\\
 & \sum_{i = 0}^t a_i\cdot \theta_j^i \le 0, \quad  j = 1,\dots,d\\
		\end{array}}
\end{equation}

We can similarly formulate Theorem \ref{theorem:upperboundi(G^t)} as an LP as follows. Again, we can scale and translate $p_t$ without loss of generality. Assume therefore that~$w(p_t)=1$ and~$\lambda(p_t)=0$. For every $u,v\in V$ and~$\ell\in[d]$, assume that~$w(p_t) = p_t(A)_{uv}$,~$0=\Lambda(p_t) = p_t(\theta_\ell)$ and solve LP~\eqref{eq:iso_LP_lower}. By taking the minimum of these~$d\binom{n}{2}$ solutions, and then multiplying by $\frac{\lceil\frac{n}{2}\rceil}{n}$, we find the best possible bound obtained by Theorem~\ref{theorem:lowerboundi(G^t)}. 

\begin{equation}
\label{eq:iso_LP_upper}
\boxed{
\begin{array}{rl}
{\tt minimize} & \sum_{i=0}^t a_i\cdot \theta_0^i \\
{\tt subject\ to} & \sum_{i = 0}^t a_i\cdot (A^i)_{wy} \geq 1,\quad  w,y \in V(G), 1\leq d(w,y)\leq t\\
& \sum_{i = 0}^t a_i\cdot (A^i)_{uv} = 1\\
& \sum_{i = 0}^t a_i\cdot \theta_\ell^i = 0\\
& \sum_{i = 0}^t a_i\cdot \theta_j^i \ge 0, \quad  j = 1,\dots,d\\
		\end{array}}
\end{equation}

    In Appendix \ref{sec:app_sims} we test the performance of Theorem \ref{theorem:lowerboundi(G^t)} and Theorem \ref{theorem:upperboundi(G^t)} for several graphs in the database of Sagemath. 
    In Table \ref{tab:t2_sim} we consider the case $t=2$, using the closed expressions from Theorem \ref{theorem:lowerboundi(G^2)closed} and Theorem \ref{theorem:upperboundi(G^2)closed}. In Tables \ref{tab:t3_sim} and \ref{tab:t4_sim}, we consider the cases $t=3$ and $t=4$; for this, we use LP \eqref{eq:iso_LP_lower} and LP \eqref{eq:iso_LP_upper}, respectively.
Distance-regularity of the considered graphs  is indicated in the tables (DRG).

\section{Approximating the isoperimetric number of distance-regular graphs}\label{sec:DRG}
In this section we focus on distance-regular graphs and specifically on lower bounds for the sparsity of distance-regular graphs. The sparsity of a graph can be used as a $2$-approximation of the (normalized) isoperimetric number. In particular, one sees that 
\begin{equation}\label{eq:sparsityisoperimetric}
    \frac{1}{2} \sigma(G) \leq \frac{i(G)}{n} \leq \sigma(G). 
\end{equation}

Upper bounds on the isoperimetric number of distance-regular graphs were studied in \cite{KL2013, QKM2020}. In \cite{QKM2020} the authors conjecture that $i(G) \le \mu_2$ for all distance-regular graphs. This, combined with the lower bound~\ref{eq:Mohar}, would imply that $\mu_2$ is a $2$-approximation for the isoperimetric number. They prove the bound for strongly regular graphs, that is, distance-regular graphs with diameter $D = 2$. A more involved upper bound for distance-regular graphs, using limits of Green functions, is given in \cite{KL2013}. 

An alternative lower bound for the isoperimetric number comes from a linear program relaxation for the sparsity~\cite{LLR1995}, see LP~\eqref{eq:LP_Linial}. For strongly regular graphs, a closed formula for the optimal value of LP~\eqref{eq:LP_Linial} was found in~\cite{S2005} (note that this paper uses ``isoperimetric number'' to denote what we call the sparsity of $G$). The main contribution of this section is the extension of this approach to all distance-regular graphs, implying that the optimal value of the linear program can be computed using only the intersection array, without having to construct the distance-regular graph. Let $k_j$ denote the number of vertices at distance $j$ from any given vertex. The lower bound for the sparsity that we obtain is

\begin{equation}\label{eq:sparsity drg}
\sigma(G) \geq \frac{k_1}{\sum_{j=1}^D jk_j},
\end{equation}
see Corollary~\ref{cor:sparsity_drg}.

As a byproduct (see Corollary~\ref{cor:drg_approx}), we show that taking a single vertex gives a $2D$-approximation of the isoperimetric  number of distance-regular graphs.

In \cite{DM2019} the authors use another LP relaxation to study the minimum cut size between any $S$ and $S^c$ with $|S|$ is fixed. This is the so-called CR$^\star$ bound. From this relaxation, one obtains a lower bound for the isoperimetric number that improves on Alon's lower bound~\ref{eq:Mohar}. We will show that our bound is incomparable with these lower bounds.\break

Denote by $\text{LP}(G)$ the objective value of the following linear program~\cite{LLR1995}:

\begin{equation}\label{eq:LP_Linial}
\boxed{
\begin{array}{rll}
{\tt minimize} & \sum_{u<v, u \sim v} \, x_{uv} \\
{\tt subject\ to} & \sum_{u < v} x_{uv} \leq 1, \\
& x_{uv} + x_{vw} - x_{uw} \geq 0 & \forall u,v,w : u<v<w,\\
& x_{uv} \geq 0 & \forall u,v: u<v.\\
\end{array}}
\end{equation}

Let $G$ be a graph and let $S \subseteq V$ such that $\sigma(S) = \sigma(G)$. Consider the assignment $x_{uv} = (|S||V\setminus S|)^{-1}$ if $|\{u, v\} \cap \partial S| = 1$ and $0$ otherwise. Note that such an assignment is feasible for LP~\eqref{eq:LP_Linial} and has objective value $\sigma(S)$. Hence, $\text{LP}(G) \leq \sigma(G)$.

Let $W_G$ denote $\sum_{u < v} d(u,v)$, the Wiener index of the graph.
If $G$ is a distance-regular graph with diameter $D=2$, that is, a strongly regular graph, then the optimal solution for LP~\eqref{eq:LP_Linial} is achieved by taking $x_{uv}=d(u, v)/W_G$~\cite{S2005}.

We show that $x_{uv} = d(u,v)/W_G$ achieves the optimal value for all distance-regular graphs. Note that this assignment is feasible and 
gives \eqref{eq:sparsity drg}. Following the strategy from \cite{S2005}, we show that the dual of LP~\eqref{eq:LP_Linial} admits a feasible solution with the same objective value. 

Let $A$ be the adjacency matrix of $G$. We write the dual of LP~\eqref{eq:LP_Linial}:

\begin{equation}\label{eq:LP_Linial_Dual}
\boxed{
\begin{array}{rll}
{\tt maximize} & \psi \\
{\tt subject\ to} &\sum_{w \in V \setminus \{u, v\}} \,y_{uw}^v + y_{vw}^u - y_{uv}^w + \psi \leq A_{uv}, & \forall u,v : u < v\\
    &y_{uv}^w \geq 0 & \forall u,v,w : u\neq v \neq w.\\
\end{array}}
\end{equation}

Consider the following restriction on the variables of LP~\eqref{eq:LP_Linial_Dual}. Choose $y_{uw}^v$ to be zero unless $w$ is on a shortest path from $u$ to $v$, in which case its value will only depend on $d(u,v)$. Distance-regularity of $G$ implies that the constraint for each pair $u,v$ depends only on $d(u,v)$. Hence this LP can be written using a condition and variable, $y_h$, for each $h = d(u,v)$. Note that $y_1$ does not appear in any of the constraints, because there can be no vertex on the path $uv$ if $d(u,v) = 1$. Additionally, we will look for a solution that satisfies the inequalities involving $\psi$ with equality. Thus, our restricted LP is

\begin{equation}\label{eq:LP_Linial_Dual_Restricted}
\boxed{
\begin{array}{rll}
{\tt maximize} & \psi \\
{\tt subject\ to} &\psi =  1 -2 \sum_{i=2}^{D} \, p_{i,i-1}^1y_i, \\
&\psi =  \sum_{i=1}^{h-1} \, p_{i,h-i}^h y_h -2 \sum_{i=h+1}^{D}\;  p_{i,i-h}^h y_i, & \forall h \in \{2, \dots, D\},\\
    &y_h \geq 0 & \forall h \in \{2, \dots, D\}.\\
\end{array}}
\end{equation}

We have separated the first constraint, which corresponds to $u,v$ being adjacent. Since we are adding restrictions to a maximization problem, LP~\eqref{eq:LP_Linial_Dual_Restricted} gives a lower bound on LP~\eqref{eq:LP_Linial_Dual}, which in turn gives a lower bound on the sparsity.

Before finding the solution to LP~\eqref{eq:LP_Linial_Dual_Restricted}, we make one further observation about the intersection numbers of distance-regular graphs.

\begin{lemma}\label{lem:drgfactori}
    For any distance-regular graph with diameter $D$ and for all $h\in\{2,\dots,D\}$, it holds that $$\sum_{i = 1}^{h-1} 2hk_ip_{h,h-i}^i = hk_h\sum_{i = 1}^{h-1} p_{i,h-i}^h.$$
 
\end{lemma}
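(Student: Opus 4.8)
The plan is to reduce the claimed identity to the classical reciprocity relation between the intersection numbers of a distance-regular graph, applied termwise and then summed. Concretely, I would first establish that for every $i\in\{1,\dots,h-1\}$ one has
\[
k_i\,p_{h,h-i}^i \;=\; k_h\,p_{i,h-i}^h \qquad (1\le i\le h-1),
\]
and then multiply through by the prefactor and sum over $i$; on the right-hand side both $k_h$ and the constant come out of the sum, leaving precisely $\sum_{i=1}^{h-1}p_{i,h-i}^h$, which is the shape of the stated right-hand side.

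The termwise identity I would prove by double counting. Consider the set of ordered triples $(a,m,b)$ of vertices with $d(a,b)=h$, $d(a,m)=i$ and $d(m,b)=h-i$. By the triangle inequality $d(a,b)\le d(a,m)+d(m,b)=h$, so equality forces these to be exactly the geodesic triples in which $m$ lies on a shortest path from $a$ to $b$ at distance $i$ from $a$; nothing is lost or double counted. I count this set in two ways. Fixing the endpoints first, there are $n\,k_h$ ordered pairs $(a,b)$ at distance $h$, and for each such pair the number of admissible intermediate vertices $m$ is by definition $p_{i,h-i}^h$, giving $n\,k_h\,p_{i,h-i}^h$. Fixing instead the pair $(a,m)$ at distance $i$, there are $n\,k_i$ such pairs, and for each the number of vertices $b$ with $d(a,b)=h$ and $d(m,b)=h-i$ is $p_{h,h-i}^i$, giving $n\,k_i\,p_{h,h-i}^i$. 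Equating the two counts and cancelling the common factor $n$ yields the displayed reciprocity. (Alternatively, this is the standard intersection-number identity $k_h p_{ij}^h=k_i p_{hj}^i$ specialised to $j=h-i$, and could simply be invoked.)

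Finally, I would multiply the reciprocity through by the prefactor appearing in the statement and sum over $i=1,\dots,h-1$; since the right-hand side of the reciprocity is $k_h\,p_{i,h-i}^h$, the factors $k_h$ and the constant pull out of the sum, producing exactly the right-hand side of the claim, while the left-hand side is the corresponding sum of the $k_i\,p_{h,h-i}^i$ terms. Here I should be careful about the admissible range of parameters: $h\le D$ guarantees that vertices at distance $h$ exist (so $k_h>0$ and the triples are meaningful), and $1\le i\le h-1$ keeps all three distances strictly between $0$ and $h$ so that the geodesic interpretation is valid. The only genuine content is the reciprocity relation, so I expect the main (mild) obstacle to be purely bookkeeping: stating cleanly which vertex plays the role of the ``middle'' point on each side, since $p_{h,h-i}^i$ and $p_{i,h-i}^h$ encode the same geodesic configuration but with the distance-$h$ pair chosen differently, and matching the constant prefactor on the two sides of the equation.
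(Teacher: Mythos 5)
Your termwise reciprocity $k_i\,p_{h,h-i}^i = k_h\,p_{i,h-i}^h$ is correct and is indeed one of the two double counts in the paper's proof, but your final step does not close the argument. First, note that the printed statement contains a typo: the weight on the left-hand side should be $2i$, not $2h$ (this is what the paper's own proof derives, and what is needed in the proof of Theorem~\ref{thm:opt_lp_drg}, where the coefficient of $y_h$ is $hk_h\sum_i p_{i,h-i}^h - 2\sum_j jk_j\,p_{h,h-j}^j$). Taken literally with the constant weight $2h$, your identity summed over $i$ gives $\sum_i 2h\,k_i p_{h,h-i}^i = 2h\,k_h\sum_i p_{i,h-i}^h$, which is \emph{twice} the claimed right-hand side (and the sums are positive since $k_h>0$ and $p_{i,h-i}^h>0$ for $1\le i\le h-1\le D-1$), so the step ``the constant pulls out, producing exactly the right-hand side'' is off by a factor of $2$ and the literal statement is false.

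With the intended weight $2i$, the factor depends on $i$ and cannot be pulled out of the sum, so reciprocity alone is not enough. The missing ingredient is the invariance of the summand under $i\mapsto h-i$: a second double count of the triples $(u,v,w)$ with $d(u,v)=i$, $d(v,w)=h-i$, $d(u,w)=h$, grouping once by the pair $(u,v)$ and once by the pair $(w,v)$, gives $k_i\,p_{h,h-i}^i = k_{h-i}\,p_{h,i}^{h-i}$. Writing $a_i = k_i\,p_{h,h-i}^i$, this says $a_i = a_{h-i}$, whence
\[
\sum_{i=1}^{h-1} 2i\,a_i \;=\; \sum_{i=1}^{h-1} i\,a_i + \sum_{i=1}^{h-1} i\,a_{h-i} \;=\; \sum_{i=1}^{h-1} i\,a_i + \sum_{i=1}^{h-1}(h-i)\,a_i \;=\; h\sum_{i=1}^{h-1} a_i .
\]
Combining this symmetrization with your reciprocity, which gives $\sum_i a_i = k_h\sum_i p_{i,h-i}^h$, yields the corrected identity. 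In short, your argument supplies exactly one of the two double counts of the paper's proof; the symmetrization step that turns the $i$-dependent weight $2i$ into the constant $h$ is genuinely absent.
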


\begin{proof}
    We first prove that $k_ip_{h-i,h}^i = k_{h-i}p_{i,h}^{h-i}$ for all $1 \leq i \leq h-1 \leq D-1$ by a double counting of the set $$\{(u,v,w) \in V^3\colon\, d(u,v) = i,\, d(v,w) = h-i \text{ and } d(u,w) = h\}.$$ There are $n$ possible values for $u$. By definition of $k_i$ and $p_{h-i,h}^i$, there are $k_i$ options for $v$ given $u$ and there are $p_{h-i,h}^i$ options for $w$ given $u$ and $v$. For the other counting, we first fix $w$. There are $n$ options for $w$, $k_{h-i}$ options for $v$ given $w$ and $p_{i,h}^{h-i}$ options for $u$ given $v$ and $w$. Hence $nk_ip_{h-i,h}^i = nk_{h-i}p_{i,h}^{h-i}$. From this it follows that
    \begin{equation}\label{eq:sumintersectionnumbers}
    \begin{aligned}
        \sum_{i = 1}^{h-1} 2ik_ip_{h-i,h}^i &= \sum_{i = 1}^{h-1} ik_ip_{h-i,h}^i + \sum_{i = 1}^{h-1} ik_{h-i}p_{i,h}^{h-i}\\ 
        &= \sum_{i = 1}^{h-1} ik_ip_{h-i,h}^i + \sum_{i = 1}^{h-1} (h-i)k_ip_{h-i,h}^{i}\\
        &= h\sum_{i = 1}^{h-1} k_ip_{h-i,h}^i. 
    \end{aligned}            
    \end{equation}

    By double counting the set 
    $$\{(u,v,w) \in V^2\colon\, d(u,v) = h\text{ and } \exists\, i \colon\, d(v,w) = i\text{ and } d(u,w) = h-i\}.$$ we deduce analogously that 
    \begin{equation*}
        nk_h\sum_{i = 1}^{h-1} p_{i,h-i}^h = n\sum_{i = 1}^{h-1} k_ip_{h,h-i}^i
    \end{equation*}
    It follows directly that
    \begin{align*}
        hk_h\sum_{i = 1}^{h-1} p_{i,h-i}^h &= h\sum_{i = 1}^{h-1} k_ip_{h,h-i}^i\\
        &= \sum_{i = 1}^{h-1} 2ik_ip_{h,h-i}^i
    \end{align*}
    where the second equality follows from Equation~\eqref{eq:sumintersectionnumbers}.
\end{proof}

\begin{theorem}\label{thm:opt_lp_drg}
    LP~\eqref{eq:LP_Linial_Dual_Restricted} has a solution with $\frac{k_1}{\sum_{j=1}^D jk_j}$.
\end{theorem}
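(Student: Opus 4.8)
The plan is to exhibit an explicit feasible point of LP~\eqref{eq:LP_Linial_Dual_Restricted} whose objective equals $k_1/\sum_{j=1}^D jk_j$, by fixing the value of $\psi$ in advance and then solving for the $y_h$ backwards through the constraints. Write $T=\sum_{j=1}^D jk_j$ and $A_h=\sum_{i=1}^{h-1}p_{i,h-i}^h$ for $2\le h\le D$, so that the constraint indexed by $h$ reads $\psi=A_hy_h-2\sum_{i=h+1}^D p_{i,i-h}^h y_i$. Since $p_{1,h-1}^h=c_h\ge 1$ for every $h\le D$ (a neighbour of $u$ lying on a shortest $u$--$v$ path), we have $A_h\ge 1>0$.

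First I would set $\psi=k_1/T$ and define $y_D,y_{D-1},\dots,y_2$ by the backward recursion $y_h=\frac1{A_h}\bigl(\psi+2\sum_{i=h+1}^D p_{i,i-h}^h y_i\bigr)$, the sum being empty when $h=D$. This is well defined because $A_h>0$, it satisfies the constraints for $h\in\{2,\dots,D\}$ by construction, and an easy backward induction shows $y_h\ge \psi/A_h>0$, so all nonnegativity constraints hold. It remains only to check the single separated constraint $\psi=1-2\sum_{i=2}^D p_{i,i-1}^1 y_i$.

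The key step is an identity that holds for every choice of $y$: weighting the right-hand side $\mathrm{RHS}_h$ of the $h$-th constraint (with $\mathrm{RHS}_1=1-2\sum_{i\ge 2}p_{i,i-1}^1y_i$) by $hk_h$ and summing over $h$ gives $\sum_{h=1}^D hk_h\,\mathrm{RHS}_h=k_1$. To see this I would interchange the order of summation in the cross terms, rewriting $\sum_{h=1}^{D-1}hk_h\sum_{i>h}p_{i,i-h}^h y_i$ as $\sum_{i=2}^D y_i\sum_{h=1}^{i-1}hk_h p_{i,i-h}^h$, and then apply Lemma~\ref{lem:drgfactori} in the form $\sum_{h=1}^{i-1}hk_hp_{i,i-h}^h=\tfrac12\, i k_i A_i$; the resulting $-\sum_{i\ge 2} ik_iA_iy_i$ cancels the diagonal contribution $\sum_{h\ge 2}hk_hA_hy_h$, leaving exactly $k_1$.

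With this identity in hand the verification is immediate: letting $R_h$ denote $\mathrm{RHS}_h-\psi$, the identity gives $\sum_{h=1}^D hk_hR_h=k_1-\psi T$. Choosing $\psi=k_1/T$ makes the right-hand side vanish, and since our construction forces $R_h=0$ for all $h\ge 2$, only the $h=1$ term survives, so $k_1R_1=0$; as $k_1>0$ this yields $R_1=0$, i.e.\ the separated constraint holds. Hence $(\psi,y_2,\dots,y_D)$ is feasible with $\psi=k_1/T$. I expect the main obstacle to be the summation bookkeeping in the identity of the third step---getting the cross terms to telescope---which is exactly where Lemma~\ref{lem:drgfactori} is needed; everything else (positivity of $A_h$, nonnegativity of the $y_h$, and the final one-line deduction) is routine.
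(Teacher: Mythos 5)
Your proposal is correct and follows essentially the same route as the paper: both arguments construct the $y_h$ by the backward recursion from $h=D$ down to $h=2$ (giving positivity), use Lemma~\ref{lem:drgfactori} to show that the $y_h$-coefficients cancel in the weighted sum $\sum_h hk_h C_h$, and deduce from this that the separated constraint $C_1$ holds automatically once $\psi=k_1/\sum_j jk_j$. The only cosmetic difference is that you fix $\psi$ up front and verify $C_1$ at the end, whereas the paper derives the value of $\psi$ from the weighted-sum identity first; the content is identical.
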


\begin{proof}
    For $j \in \{1,\ldots D\}$ let $C_j$ denote the $j$th constraint of LP~\eqref{eq:LP_Linial_Dual_Restricted}, specifically for $j  \ge 2$ $C_j$ denotes
    $$\psi = \sum_{i=1}^{j-1} p_{i,j-i}^j y_j -2 \sum_{i=j+1}^{D} p_{i,i-j}^j y_i.$$
    We show that, for each $h$, the coefficients of $y_h$ cancel in the right hand side of the sum $\sum_{j = 1}^D jk_jC_j$. For any $h \in \{2,\ldots, D\}$ the variable $y_h$ only appears in the constraints $C_1,\ldots, C_h$. In particular, the coefficient of $y_h$ in the sum $\sum_jk_j C)j$ is $$hk_h\sum_{i = 1}^{h-1} p_{i,h-i}^h - 2\sum_{j = 1}^{h-1} jk_jp_{h,h-j}^j.$$ By Lemma~\ref{lem:drgfactori} this is equal to zero.  Hence $\sum_{j = 1}^D jk_jC_j$ induces the constraint $\sum_{j = 1}^D jk_j\psi = k_1$ and $\psi$ must be equal to the desired value. 
    
    We show by induction on $j$ that for all $j\in\{0,\dots,D-2\}$ there is a solution that satisfies $C_D,\ldots, C_{D-j}$ with all $y_h > 0$. Throughout we use that $p_{i,j-i}^j > 0$ for all \(j\in\{1,\dots,D\}\) and $i\in\{0,\dots,j\}$. This holds because there is a pair that is at distance $j$ and there must be a vertex on the path between them that has distance $i$ and $j-i$ to the end points. Note that constraint $C_D$ says that
    $$\psi = \sum_{i = 1}^{D-1} p_{i,D-i}^Dy_D,$$
    so we can choose $y_D$ to be the positive number that satisfies this equation. Suppose that for some $\ell\in\{1,\dots,D-2\}$, the $y_D,\ldots, y_{D-\ell+1} > 0$ have been chosen to satisfy constraints $C_D,\ldots, C_{D-\ell+1}$. Consider $C_{D-\ell}$ and observe that
    $$\sum_{i = 1}^{D-\ell-1} p_{i,D-\ell-i}^{D-\ell} y_{D-\ell} = \psi + 2\sum_{i = D-\ell+1}^Dp_{i,i-(D-\ell-1)}^{D-\ell}y_i > 0.$$
    The only variable that has not been chosen yet is $y_{D-\ell}$, so this linear equation determines it and it is positive. The variable $y_{D-\ell}$ does not appear in $C_D, \ldots, C_{D-\ell +1}$, so those constraints still hold. This completes the induction.
    
    Having defined $y_h$ for all $h\in\{2,\dots,D\}$ to be positive numbers such that $C_2,\ldots, C_D$ hold, we find that $C_1$ must also hold with equality because $\psi$ was chosen such that $\sum jk_jC_j$ holds. Hence we find a feasible solution to LP~\eqref{eq:LP_Linial_Dual_Restricted} with $\psi  = \frac{k_1}{\sum_{j = 1}^D jk_j}$.
\end{proof}

\begin{corollary}\label{cor:sparsity_drg}
    The sparsity of a distance-regular graph $G$ satisfies
    $$\sigma(G) \geq  \frac{k_1}{\sum_{j=1}^D jk_j}$$
\end{corollary}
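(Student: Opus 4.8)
The plan is to combine the three facts already assembled in this section into a short chain of inequalities, with essentially no new computation required. First I would recall the relaxation inequality established just before LP~\eqref{eq:LP_Linial}: the feasible assignment $x_{uv} = (|S||V\setminus S|)^{-1}$ on the edge boundary of an optimal sparsity set $S$ shows that $\text{LP}(G) \le \sigma(G)$. Thus it suffices to prove the reverse-direction lower bound $\text{LP}(G) \ge k_1/\sum_{j=1}^D jk_j$, since then $\sigma(G) \ge \text{LP}(G) \ge k_1/\sum_{j=1}^D jk_j$.

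Second, I would invoke linear programming duality. Since LP~\eqref{eq:LP_Linial} is a minimization and LP~\eqref{eq:LP_Linial_Dual} is its dual (a maximization), weak duality gives that every feasible solution of the dual has objective value at most $\text{LP}(G)$. The key structural point is that LP~\eqref{eq:LP_Linial_Dual_Restricted} was obtained from the dual purely by \emph{adding restrictions}: forcing certain $y_{uw}^v$ to vanish, tying the surviving variables to $d(u,v)$ via distance-regularity, and requiring the $\psi$-constraints to hold with equality. Consequently, any feasible solution of the restricted program, extended by zeros on the eliminated variables, is a feasible solution of the full dual LP~\eqref{eq:LP_Linial_Dual} with the same objective value $\psi$.

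Third, I would apply Theorem~\ref{thm:opt_lp_drg}, which produces a feasible solution of LP~\eqref{eq:LP_Linial_Dual_Restricted} with $\psi = k_1/\sum_{j=1}^D jk_j$. By the previous paragraph this value is attained at a genuine dual-feasible point, so weak duality yields $k_1/\sum_{j=1}^D jk_j \le \text{LP}(G)$. Chaining this with $\text{LP}(G) \le \sigma(G)$ completes the proof.

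The only place demanding care---more a matter of bookkeeping than of genuine difficulty---is confirming that the restriction argument preserves dual feasibility in the correct direction. Because we are restricting a \emph{maximization} problem, feasibility is not lost when we pad the restricted solution back out with zeros, and weak duality then flows the right way, so that a dual-feasible value \emph{lower-bounds} (rather than upper-bounds) $\text{LP}(G)$. Everything substantive has already been discharged in Theorem~\ref{thm:opt_lp_drg}, so no further estimates are needed here.
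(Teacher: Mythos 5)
Your proposal is correct and follows exactly the route the paper takes: the corollary is deduced from Theorem~\ref{thm:opt_lp_drg} via the chain $\sigma(G) \ge \text{LP}(G) \ge$ (optimal value of the dual) $\ge$ (value of the restricted dual), all of which is set up in the discussion preceding LP~\eqref{eq:LP_Linial_Dual_Restricted}. Your extra care about padding the restricted solution with zeros and about the direction of weak duality is exactly the (implicit) justification the paper relies on when it says the restricted LP ``gives a lower bound on LP~\eqref{eq:LP_Linial_Dual}, which in turn gives a lower bound on the sparsity.''
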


It follows from Equation~\eqref{eq:sparsityisoperimetric} that if $G$ is a distance-regular graph with diameter $D$, then
\begin{equation}\label{eq:closed_lb_drg}
    \frac{nk_1}{\sum_{j=1}^D 2jk_j} \leq i(G).
\end{equation}

These bounds are tight. For example, Theorem~\ref{thm:opt_lp_drg} and inequality~\eqref{eq:closed_lb_drg} are attained with equality for even cycles: the sparsity and the isoperimetric number of $C_{2n}$ can be upper bounded by taking a set $S$ that induces a copy of $P_n$, so $\sigma(C_{2n}) \leq 2/n^2$ and $i(C_{2n}) \leq 2/n$. This construction meets the lower bound from Theorem~\ref{thm:opt_lp_drg}: 
    \begin{equation*}
     \frac{k_1}{\sum_{j=1}^n jk_j} = \frac{2}{2\binom{n}{2}+n} = \frac{2}{n^2}. \qedhere   
    \end{equation*}

In Table~\ref{tab:t5_drg}, we compare the lower bound $\mu_2/2$, the CR$^\star$ bound and Equation \eqref{eq:closed_lb_drg} on   distance-regular graphs. We find examples of graphs for which $\mu_2/2 \le CR^\star < \text{LP}(G) = i(G)$, such as the Dodecahedron graph.

As a byproduct, Theorem~\ref{thm:opt_lp_drg} can be used to give a certificate in approximation algorithms for $i(G)$. 

\begin{corollary}\label{cor:drg_approx}
    Let $G$ be a distance-regular graph with diameter $D$ and let $v \in V(G)$. The set $S = \{v\}$ gives a $D$-approximation for $\sigma(G)$ and a $2D$-approximation for $i(G)$.
\end{corollary}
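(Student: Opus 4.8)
The plan is to evaluate both expansion parameters on the single-vertex set $S=\{v\}$ and compare the resulting values against the distance-regular lower bounds already established. Since $G$ is distance-regular, every vertex has degree $k_1$, so $\partial\{v\}$ consists of exactly $k_1$ edges. Hence
$$\sigma_G(\{v\}) = \frac{k_1}{1\cdot(n-1)} = \frac{k_1}{n-1}, \qquad i_G(\{v\}) = \frac{k_1}{1} = k_1.$$
Because $\sigma(G)$ and $i(G)$ are minima over admissible sets, and $\{v\}$ is admissible (for $i(G)$ we only need $n\ge 2$, which holds since $D\ge 1$), I immediately get the easy halves $\sigma(G)\le \sigma_G(\{v\})$ and $i(G)\le i_G(\{v\})=k_1$. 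It then remains only to bound these single-vertex values from above by a multiple of the optimum.

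The key elementary observation is that the vertices at distances $1,\dots,D$ from $v$ partition $V\setminus\{v\}$, so $n-1=\sum_{j=1}^D k_j$, and therefore
$$\sum_{j=1}^D j\,k_j \le D\sum_{j=1}^D k_j = D(n-1).$$
For the sparsity, I would combine this with the bound $\sigma(G)\ge k_1/\sum_{j=1}^D jk_j$ from Corollary~\ref{cor:sparsity_drg}: the displayed inequality gives $\frac{k_1}{n-1}\le \frac{Dk_1}{\sum_{j=1}^D jk_j}\le D\,\sigma(G)$, so that $\sigma_G(\{v\})\le D\,\sigma(G)$, which together with $\sigma(G)\le\sigma_G(\{v\})$ establishes the $D$-approximation.

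For the isoperimetric number I would argue analogously, replacing Corollary~\ref{cor:sparsity_drg} by the bound \eqref{eq:closed_lb_drg}, namely $i(G)\ge \frac{nk_1}{2\sum_{j=1}^D jk_j}$. Since $\sum_{j=1}^D jk_j\le D(n-1)<Dn$, this yields $2D\,i(G)\ge \frac{Dnk_1}{\sum_{j=1}^D jk_j}> k_1 = i_G(\{v\})$, so $i_G(\{v\})\le 2D\,i(G)$, completing the $2D$-approximation. I do not expect any genuine obstacle: the only content is identifying the coarse inequality $\sum_j jk_j\le D(n-1)$ and matching the constants $D$ and $2D$ to the two parameters. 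The extra factor of $2$ for the isoperimetric case is precisely the loss incurred when passing between $\sigma$ and $i$ via \eqref{eq:sparsityisoperimetric}, which is already absorbed into \eqref{eq:closed_lb_drg}.
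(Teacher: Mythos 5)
Your proposal is correct and follows essentially the same route as the paper: both evaluate the singleton upper bounds $\sigma_G(\{v\})=k_1/(n-1)$ and $i_G(\{v\})=k_1$, use the LP-based lower bound $\sigma(G)\ge k_1/\sum_{j=1}^D jk_j$ together with the coarse estimate $\sum_{j=1}^D jk_j\le D(n-1)$, and transfer the result to $i(G)$ via Equation~\eqref{eq:sparsityisoperimetric} (which is exactly what \eqref{eq:closed_lb_drg} packages). No gaps.
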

\begin{proof}
    Let $k$ be the degree of $G$. It follows from Theorem \ref{thm:opt_lp_drg} that
    \begin{equation*}
        \sigma(G) \geq \frac{\sum_{i\sim j} d(i,j)}{\sum_{i\neq j} d(i,j)} \geq \frac{nk}{n(n-1)D} = \frac{k}{D(n-1)}.
    \end{equation*}
    On the other hand, consider any singleton $S = \{v\}$. Then we have
    \begin{equation*}
        \sigma(G) \leq \frac{|\partial S|}{|S||V\setminus S|} = \frac{k}{n-1}.
    \end{equation*}
    The $2D$ approximation follows from Equation~\eqref{eq:sparsityisoperimetric}.
\end{proof}

Note that Corollary~\ref{cor:drg_approx} does not hold for every regular graph. For instance, the Cartesian product $K_k \times K_2$ is a $k$-regular graph with diameter $2$ and $i(K_k \times K_2) \leq 1$, so the degree of a vertex is not a $2D$ approximation of the isoperimetric number for $k > 4$. 

Corollary~\ref{cor:drg_approx} gives an approximation certificate for algorithms that consider a single vertex. More involved algorithms can give better approximations. 


\section{The exact isoperimetric number and a connection with finite geometry}\label{sec:finitegeometry}

The determination of the exact value of the isoperimetric number for a given graph is in general an NP-hard  problem. The aim of this section is to find the exact value in some particular cases. First, we recall some known results, and derive the isoperimetric number for generalized Hamming graphs, connecting with previous results \cite{M1989, hypercubeedgeisopowers}. Next, we study the exact value for several new families of graphs. Using a simple but powerful observation derived from Corollary \ref{cor:tight}, we show that the isoperimetric number attains the spectral lower bound $\frac12\mu_2$ from \eqref{eq:Mohar} if and only if there exists a \textit{tight set} (of Type II) containing half the number of vertices of the graph. The link between the isoperimetric number and tight sets 
opens a door to a connection between graph expansion and finite geometry, providing a way to derive the isoperimetric number of graphs coming from geometries.
As far as we know, previously only the exact \emph{vertex}-isoperimetric number was studied in geometric graphs \cite{Hui18,Price18}, and on the (edge-)isoperimetric number of geometric graphs, only bounds have been studied \cite{levi}.
Tight sets in collinearity graphs of polar spaces have been widely studied in the literature, and the spectra of the related graphs are known, making these graphs natural candidates for examples of graphs where we can compute the isoperimetric number directly.

\subsection{Graph classes whose isoperimetric number is known}

Before we study the isoperimetric number in geometrical graphs using tight sets, we recall some families where the isoperimetric number is known. As far as we know, these are the disconnected graphs (where the isoperimetric number is trivially zero), complete graphs, cycles, paths, complete bipartite graphs and certain Hamming graphs \cite{M1989}, see Table~\ref{tab:exactnumber}. The number is also known for some of their images under taking complements, joins and Cartesian products \cite{Aslan2013}.

\begin{table}[H]
\renewcommand{\arraystretch}{1.5}
    \centering
    \begin{tabular}{l|c|l}
        \(G\) & \(i(G)\) & Description\\
        \hline\hline
        \(K_n\) & \(\lceil \frac{n}2\rceil\) & Complete graph on \(n\) vertices\\
        \(P_n\) & \(1/\lceil\frac{n}2\rceil\) & Path on \(n\) vertices\\
        \(C_n\) & \(2/\lfloor\frac{n}2\rfloor\) & Cycle on \(n\) vertices\\
        \(K_{m,n}\) & \(\lceil\frac{mn}2\rceil/\lfloor\frac{m+n}2\rfloor\) & Complete bipartite graph\\
        \(Q_n\) & 1 & Hypercube on \(2^n\) vertices\\
        \(H(n,q)\), \(q\) even & \(\frac{q}2\) & Hamming graph on \(q^n\) vertices
    \end{tabular}
    \caption{The isoperimetric number \(i(G)\) of some families of graphs, see \cite{M1989}.}
    \label{tab:exactnumber}
\end{table}

\subsection{Hamming graphs and other Cartesian products}\label{sec:hamming}
The Hamming graph \[H(n,q)\cong \underbrace{K_q\times\cdots\times K_q}_{n}\] has \(i(\mathrm{H}(n,q))=\frac{q}2\) if \(q\) is even, by \cite[Theorem~5.1]{M1989}. But, also if \(q\) is odd, we can determine the isoperimetric number to be $i(H(n,q)) = \lceil\frac{q}{2}\rceil$. More generally, we have the following result: 

\begin{proposition}\label{prop:Hamming}
    \(i(K_{q_1}\times\cdots\times K_{q_n})=\min_i\lceil\frac{q_i}2\rceil\)
\end{proposition}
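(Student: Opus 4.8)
The plan is to prove the equality $i(K_{q_1}\times\cdots\times K_{q_n})=\min_i\lceil\frac{q_i}{2}\rceil$ by establishing the upper and lower bounds separately. Write $G=K_{q_1}\times\cdots\times K_{q_n}$ and $N=\prod_i q_i$ for the number of vertices. For the \textbf{upper bound}, let $i^\ast$ be the index achieving $\min_i\lceil\frac{q_i}{2}\rceil$. The idea is to exploit the product structure: fix the coordinate $i^\ast$ and split the factor $K_{q_{i^\ast}}$ into two parts $X$ and $Y$ of sizes $\lfloor\frac{q_{i^\ast}}{2}\rfloor$ and $\lceil\frac{q_{i^\ast}}{2}\rceil$. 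Let $S$ consist of all vertices whose $i^\ast$-th coordinate lies in $X$; then $|S|=\lfloor\frac{q_{i^\ast}}{2}\rfloor\cdot\frac{N}{q_{i^\ast}}\le\frac12 N$. Every edge leaving $S$ must flip the $i^\ast$-th coordinate from $X$ to $Y$ while keeping all other coordinates fixed (edges of a Cartesian product change exactly one coordinate, and coordinate changes in the other factors stay inside $S$). Hence each vertex of $S$ has exactly $|Y|=\lceil\frac{q_{i^\ast}}{2}\rceil$ neighbours outside $S$, so $|\partial S|=|S|\lceil\frac{q_{i^\ast}}{2}\rceil$ and $i_G(S)=\lceil\frac{q_{i^\ast}}{2}\rceil$, yielding $i(G)\le\min_i\lceil\frac{q_i}{2}\rceil$.

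For the \textbf{lower bound}, I would argue that no set $S$ with $1\le|S|\le\frac12 N$ can do better than $\min_i\lceil\frac{q_i}{2}\rceil$. The natural tool is a coordinate-wise compression (projection) argument in the spirit of the original Hamming-graph result \cite{M1989}. For each vertex $v\in S$ and each coordinate $i$, consider the ``fibre'' through $v$ in direction $i$, that is, the set of $q_i$ vertices agreeing with $v$ in all coordinates except the $i$-th. The restriction of $S$ to such a fibre is a subset of the complete graph $K_{q_i}$, and within that clique the number of boundary edges of a subset of size $s$ is $s(q_i-s)$. Summing the contribution of boundary edges across all fibres in all $n$ directions and comparing with $|S|$, one wants to show $|\partial S|\ge|S|\cdot\min_i\lceil\frac{q_i}{2}\rceil$. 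The clean way is to prove the single-clique inequality $s(q_i-s)\ge s\lceil\frac{q_i}{2}\rceil$ fails for $s>\lceil q_i/2\rceil$, so instead one should bound the \emph{average} edge-boundary per vertex fibre-by-fibre and invoke convexity to reduce to the worst coordinate.

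Concretely, I expect the cleanest route is to use the edge-isoperimetric inequality for a single complete graph as the base case and then induct on $n$, splitting $G=H\times K_{q_n}$ where $H=K_{q_1}\times\cdots\times K_{q_{n-1}}$. Decomposing $S$ into its $q_n$ slices $S_1,\dots,S_{q_n}$ (one for each value of the last coordinate), the boundary splits as $|\partial_G S|=\sum_{j}|\partial_H S_j|+(\text{edges in the }K_{q_n}\text{-direction})$, where the second term counts, summed over each $H$-fibre, the boundary edges of the corresponding subset of $K_{q_n}$. One then bounds the first sum using the inductive hypothesis $|\partial_H S_j|\ge\lceil\frac{q_{\min}'}{2}\rceil|S_j|$ (with $q_{\min}'$ the minimum over the first $n-1$ coordinates, when the slice is small enough) and the second using the single-clique bound, carefully handling slices of size exceeding $\frac12|V(H)|$.

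\emph{The main obstacle} is the lower bound, specifically the bookkeeping when individual slices $S_j$ are large (more than half of $V(H)$), since the inductive hypothesis on $i(H)$ only controls sets up to half the vertices. Handling this requires either symmetrising $S$ versus its complement within each slice or arguing that large slices force many boundary edges in the $K_{q_n}$-direction, and making the two contributions balance to exactly $\min_i\lceil\frac{q_i}{2}\rceil$ is the delicate part. A compression argument that replaces $S$ by a ``canonical'' subset of the same size without increasing $i_G(S)$ — monotonising each fibre so that $S$ becomes a sub-box — would sidestep much of this casework, and I would try that first; the payoff is that on a sub-box the boundary count is exact and the minimum is transparently achieved in the direction of the smallest $q_i$.
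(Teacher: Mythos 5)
Your upper bound is correct and coincides with the paper's extremal example: the set of all vertices whose $i^\ast$-th coordinate lies in a fixed $\lfloor q_{i^\ast}/2\rfloor$-subset has exactly $\lceil q_{i^\ast}/2\rceil$ boundary edges per vertex. The lower bound, however, is not a proof but a plan with an acknowledged hole, and the hole is exactly where the work is. You correctly observe that the naive per-fibre bound fails and that slice-by-slice induction breaks down when a slice exceeds half of $V(H)$, and you propose to escape via compression to a ``canonical'' set. The paper does use a canonical-set reduction, but it imports it as a black box: Lindsey's theorem states that among all sets of a \emph{fixed} size $m$, the lexicographic initial segment $S_m$ maximizes the number of internal edges, hence (by regularity) minimizes $|\partial S|/|S|$. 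You neither prove nor cite such a compression lemma, and proving one from scratch is a nontrivial edge-isoperimetric result in its own right.

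Moreover, your proposed canonical form is wrong: monotonising fibres does not produce a ``sub-box,'' and sub-boxes of a prescribed cardinality $m$ generally do not exist. The true extremal sets are lexicographic initial segments, which are \emph{not} boxes, so the claim that ``on a sub-box the boundary count is exact and the minimum is transparently achieved'' does not apply. Even after reducing to lexicographic segments, the paper still needs a careful case analysis: for $m\le m_0:=\lfloor q_1/2\rfloor q_2\cdots q_n$ every vertex of $S_m$ has at least $\lceil q_1/2\rceil$ outside neighbours, but for $m>m_0$ (which occurs only when $q_1$ is odd) one writes $S_m=S_{m_0}\cup T$ with $T$ an initial segment of a slice $H\cong K_{q_2}\times\cdots\times K_{q_n}$, checks that $|\partial S_m|=|\partial S_{m_0}|+|\partial_H T|$ because each vertex of $T$ has equally many neighbours in $S_{m_0}$ as outside $S_{m_0}\cup V(H)$, and then applies the induction hypothesis to $T$ inside $H$ together with a mediant inequality. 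This is precisely the ``delicate part'' you flag but do not resolve, so the lower bound remains unproven as written.
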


\begin{proof}
    By induction on \(n\). For \(n=1\), the statement is true. Suppose that it holds for \(n-1\) and assume that \(q_1\leq\dots\leq q_n\). Label the vertices of the graph \(G=K_{q_1}\times\cdots\times K_{q_n}\) with \(n\)-tuples \((x_1,\dots,x_n)\) where \(x_i\in\mathbb{Z}/n\mathbb{Z}\).
    It has been shown in \cite{Lindsey} that, for a fixed value \(m\), the number of edges within a set of size \(m\) is maximized by the set \(S_m\) of the first \(m\) vertices of \(G\) in lexicographic order
    \[(x_1,\dots,x_n)<(y_1,\dots,y_n)\quad\Longleftrightarrow\quad\exists\,i\colon\,x_1=y_1,\,\dots,\,x_{i-1}=y_{i-1}\text{ and }x_i<y_i.\]
    Since the graph is regular, this set minimizes \(i_m(G)=\min \left\{\frac{|\partial S|}{|S|} \colon\, S \subseteq V(G),\, |S|=m\right\}\) because if \(k\) is the valency of \(G\), then \(k\cdot|S|=2\cdot|E(S)|+|\partial S|\) for every \(S\subseteq V(G)\).
    We claim that the isoperimetric number $$i(G)=\min_{m\leq\frac12q_1\cdots q_n} i_m(G)$$ is obtained when \(m=m_0:=\lfloor\frac{q_1}2\rfloor\cdot q_2\cdots q_n\), in which case it is equal to \(\lceil\frac{q_1}2\rceil\), the value in the statement. Note that every vertex in \(S_{m_0}\) has \(\lceil\frac{q_1}2\rceil\) neighbours outside \(S_{m_0}\).

    Choose an arbitrary integer \(m\) with \(m\leq\frac12q_1\cdots q_n\). If \(m\leq m_0\), then every vertex in \(S_m\) has at least \(\lceil\frac{q_1}2\rceil\) neighbours outside \(S_m\), which means that \(i_m(G)=\frac{|\partial S_m|}{|S_m|}\geq\lceil\frac{q_1}2\rceil\).
   
    If \(m>m_0\), then \(q_1\) is odd and \(S_m=S_{m_0}\cup T\), where \(T\) is an initial segment of a ``slice'' \(H\cong K_{q_2}\times\dots\times K_{q_n}\), see Figure~\ref{fig:Hamming}.
    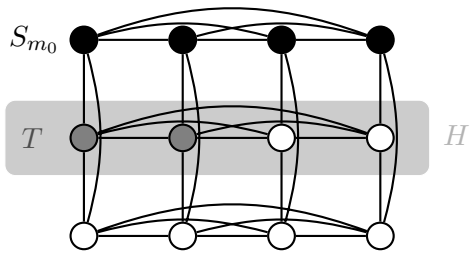
\begin{figure}[H]
        \centering
        \newcommand{\gridlengte}{1.3}
\begin{tikzpicture}
    \path (-3*\gridlengte,0) rectangle (4*\gridlengte,0);
    \draw[white, rounded corners, fill=black!20] (-1.8*\gridlengte,-.5) rectangle (2.5*\gridlengte,.5) node[black!30,xshift=10,yshift=-13] {\(H\)};
    \path[every node/.append style={circle, draw=black, thick, fill=white, minimum size=10pt, label distance=1pt, inner sep=0pt}]
    (2*\gridlengte,0) node (13) {}
    (2*\gridlengte,-\gridlengte) node (23) {}
    (\gridlengte,0) node (12) {}
    (-\gridlengte,-\gridlengte) node (20) {}
    (0,-\gridlengte) node (21) {}
    (\gridlengte,-\gridlengte) node (22) {};
    \path[every node/.append style={circle, draw=black, thick, fill=black, minimum size=10pt, label distance=2pt, inner sep=0pt}]
    (-\gridlengte,\gridlengte) node[label=180:\(S_{m_0}\)] (00) {}
    (0,\gridlengte) node (01) {}
    (\gridlengte,\gridlengte) node (02) {}
    (2*\gridlengte,\gridlengte) node (03) {};
    \path[every node/.append style={circle, draw=black, thick, fill=black!50, minimum size=10pt, label distance=8pt, inner sep=0pt}]
    (-\gridlengte,0) node[label=180:{\color{black!70}\(T\)}] (10) {}
    (0,0) node (11) {};
    \draw[thick] (00) edge (01) edge (10)
    (01) edge (02) edge (11)
    (02) edge (03) edge (12)
    (03) edge (13)
    (10) edge (11) edge (20)
    (11) edge (12) edge (21)
    (12) edge (13) edge (22)
    (13) edge (23)
    (20) edge (21)
    (21) edge (22)
    (22) edge (23)
    (23);
    \draw[thick] (00) to[out=15,in=165] (02)
    (01) to[out=15,in=165] (03)
    (10) to[out=15,in=165] (12)
    (11) to[out=15,in=165] (13)
    (20) to[out=15,in=165] (22)
    (21) to[out=15,in=165] (23)
    (00) to[out=-75,in=75] (20)
    (01) to[out=-75,in=75] (21)
    (02) to[out=-75,in=75] (22)
    (03) to[out=-75,in=75] (23)
    (00) to[out=20,in=160] (03)
    (10) to[out=20,in=160] (13)
    (20) to[out=20,in=160] (23);
\end{tikzpicture}
        \caption{\(S_m=S_{m_0}\cup T\), where \(T\) is an initial segment of the ``slice'' \(H\).}
        \label{fig:Hamming}
    \end{figure}
    Every vertex of \(T\) has equally many neighbours in \(S_{m_0}\) as in \(V(G)\setminus(S_{m_0}\cup V(H))\), so $|\partial S_{m}|=|\partial S_{m_0}|+|\partial_H T|$.
    By the induction hypothesis, \(\frac{|\partial_HT|}{|T|}\geq\lceil\frac{q_2}2\rceil\geq\lceil\frac{q_1}2\rceil\), where \(\partial_HT\) denotes the border of \(T\) within \(H\). Hence
    \[i_m(G)=\frac{|\partial S_m|}{|S_m|}=\frac{|\partial S_{m_0}|+|\partial_HT|}{|S_{m_0}|+|T|}\geq\left\lceil\frac{q_1}2\right\rceil,\]
    which proves that \(i(G)\geq\lceil\frac{q_1}2\rceil\).
\end{proof}

In the same way, we can also show a similar statement for the Cartesian product of complete bipartite graphs:
\begin{proposition}
    \(i(K_{q_1,q_1}\times\cdots\times K_{q_n,q_n})=\min_i\lceil\frac{q_i^2}2\rceil/q_i\)
\end{proposition}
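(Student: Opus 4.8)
The plan is to mirror the proof of Proposition~\ref{prop:Hamming} as closely as possible, since the graph \(G=K_{q_1,q_1}\times\cdots\times K_{q_n,q_n}\) is again a Cartesian product of regular graphs, and the isoperimetric number of a \(k\)-regular graph is minimized by the sets that maximize the number of internal edges (via the identity \(k|S|=2|E(S)|+|\partial S|\)). First I would set up the inductive framework on \(n\), ordering the factors so that \(\lceil\frac{q_1^2}2\rceil/q_1\leq\cdots\leq\lceil\frac{q_n^2}2\rceil/q_n\), and observe that the base case \(n=1\) reduces to computing \(i(K_{q,q})\) directly from Table~\ref{tab:exactnumber} (the complete bipartite entry with \(m=n=q\) gives \(\lceil\frac{q^2}2\rceil/q\)). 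The key structural fact I would invoke is a Lindsey-type edge-isoperimetric result: for a fixed set size \(m\), the number of internal edges of \(G\) is maximized by an initial segment \(S_m\) in a suitable lexicographic (nested) order on the vertices, which are labelled by tuples indexing the bipartition classes of each factor.

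The main work is identifying the critical value of \(m\) at which the optimum is attained and verifying the border count there. Here the asymmetry of \(K_{q,q}\) versus \(K_q\) is the crucial difference: a single factor \(K_{q,q}\) has a natural balanced cut (one bipartition class versus the other) of size \(q^2\) edges across a set of \(q\) vertices, so the per-factor ''cost'' is \(\lceil\frac{q^2}2\rceil/q\) rather than \(\lceil\frac q2\rceil\). I would therefore take the candidate set to be \(S_{m_0}\) where \(m_0\) corresponds to selecting one complete bipartition class of the first factor across all remaining factors, i.e.\ \(m_0=q_1\cdot(q_2\cdot\! 2)\cdots(q_n\cdot\! 2)/2\) adjusted to the correct slice count, and check that every vertex of \(S_{m_0}\) has exactly \(\lceil\frac{q_1^2}2\rceil/q_1\) (appropriately counted) neighbours outside \(S_{m_0}\), so that \(i_{m_0}(G)\) equals the claimed value.

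For the lower bound I would argue exactly as in the Hamming case by a case split on \(m\leq m_0\) versus \(m>m_0\). When \(m\leq m_0\), every vertex of the optimal initial segment \(S_m\) retains at least the critical number of outside neighbours, giving \(i_m(G)\geq\min_i\lceil\frac{q_i^2}2\rceil/q_i\). When \(m>m_0\), I would write \(S_m=S_{m_0}\cup T\) with \(T\) an initial segment of a slice \(H\cong K_{q_2,q_2}\times\cdots\times K_{q_n,q_n}\), use that each vertex of \(T\) has the same number of neighbours back into \(S_{m_0}\) as forward out of the slice (so the two cut contributions add, \(|\partial S_m|=|\partial S_{m_0}|+|\partial_H T|\)), and close the induction via \(\frac{|\partial_H T|}{|T|}\geq\lceil\frac{q_2^2}2\rceil/q_2\geq\lceil\frac{q_1^2}2\rceil/q_1\) together with the mediant inequality on \(\frac{|\partial S_{m_0}|+|\partial_H T|}{|S_{m_0}|+|T|}\).

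The hard part will be pinning down the correct combinatorial description of the nested optimal sets and the exact border count at \(m_0\): unlike \(K_q\), each factor \(K_{q,q}\) is bipartite and vertex-transitive but not complete, so the ''first \(m\) vertices'' order and the resulting degree-to-the-outside bookkeeping are more delicate, and one must confirm that the ceiling \(\lceil\frac{q^2}2\rceil\) (which only matters when \(q\) is odd, since \(q^2\) is odd exactly when \(q\) is) is reproduced correctly by the initial-segment construction rather than by some unbalanced cut. I expect the Lindsey-type monotonicity—ensuring the lex-initial segments are simultaneously optimal for all sizes—to require either a direct compression argument on the bipartite factors or an appeal to the same nested-solution machinery used for Hamming graphs, and this is where the routine-looking calculation hides the real content.
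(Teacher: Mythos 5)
Your overall plan coincides with the paper's: its proof of this proposition is literally a two-line sketch saying ``similar to the proof of Proposition~\ref{prop:Hamming}'', taking the base case $i(K_{q,q})=\lceil\frac{q^2}2\rceil/q$ from Table~\ref{tab:exactnumber} and replacing Lindsey's theorem by the Ahlswede--Cai result \cite{AhlswedeCai} on the optimality of lexicographic initial segments. So the induction on $n$, the appeal to nested extremal sets, the slice decomposition $S_m=S_{m_0}\cup T$ and the mediant estimate are exactly the intended ingredients.

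There is, however, a concrete error in your identification of the extremal set, and it is not merely a bookkeeping issue. You propose taking ``one complete bipartition class of the first factor across all remaining factors'' and assert that the corresponding cut in a single factor $K_{q,q}$ has per-vertex cost $\lceil\frac{q^2}2\rceil/q$. It does not: if $S$ is a bipartition class of $K_{q,q}$, then \emph{every} edge is cut, so $|\partial S|=q^2$ and $i(S)=q$, roughly twice the claimed value. The extremal set of size $q$ in $K_{q,q}$ instead takes $\lceil\frac{q}2\rceil$ vertices from one side and $\lfloor\frac{q}2\rfloor$ from the other, cutting $q^2-2\lceil\frac{q}2\rceil\lfloor\frac{q}2\rfloor=\lceil\frac{q^2}2\rceil$ edges. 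Accordingly $S_{m_0}$ must be (a \emph{balanced} half of the first factor) $\times$ (all vertices of the remaining factors), so $m_0=q_1\cdot\prod_{i\geq2}2q_i$, and the single-factor ordering underlying the lexicographic order must alternate between the two sides of each $K_{q_i,q_i}$ so that every initial segment is balanced --- this is the nested ordering whose optimality \cite{AhlswedeCai} establishes. A further consequence you would need to absorb: when $q_1$ is odd the vertices of $S_{m_0}$ do not all have the same number of outside neighbours ($\lceil\frac{q_1}2\rceil$ on one side, $\lfloor\frac{q_1}2\rfloor$ on the other), so the case $m\leq m_0$ cannot be closed by the uniform per-vertex count used in the Hamming proof; one needs the averaged bound $|\partial S_m|/|S_m|\geq\lceil\frac{q_1^2}2\rceil/q_1$ coming from the optimality of initial segments at every size. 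You flag this region as ``the hard part'', but as written the candidate set, the value of $m_0$, and the border count at $m_0$ are all incorrect, so the proof does not go through without repairing them.
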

\begin{proof}
    Similar to the proof of Proposition~\ref{prop:Hamming}. For the induction basis, we refer to Table~\ref{tab:exactnumber}. The optimality of lexicographically ordered sets, which is needed for this argument, is shown in \cite{AhlswedeCai}.\end{proof}

\subsection{Johnson graphs}
Let \(k\leq n\). The \emph{Johnson graph $J(n,k)$} has as vertices the \(k\)-sets of $\{1,\dots,n\}$, where two vertices are adjacent if they intersect in a \((k-1)\)-set. For \(J(n,2)\), the so-called \emph{triangular graph}, the isoperimetric problem was studied in a slightly different context:

\begin{example}
    The optimal isoperimetric sets for \(J(n,2)\) have already been determined \cite{Ahl78}, but deriving from them an exact expression for the isoperimetric number seems to be non-trivial. It was shown in \cite{triangular} that \(i(J(n,2))\sim (2-\sqrt{2})n\) asymptotically. From the results in that paper, one can also conclude that if \(2n(n-1)+1\) is a square, then \(i(J(n,2))=\frac{2q(q+1)}{n+q}\) where \(q=\frac12(1+\sqrt{2n(n-1)+1})\).
\end{example}

\subsection{Tight sets}
Let $G$ be a regular graph on $n$ vertices. Following \cite{intriguing}, we define:
\begin{itemize}
    \item If $S$ is a set of vertices satisfying \(\frac{|\partial S|}{|S|}=\left(1-\frac{|S|}{n}\right)\mu_n\), then $S$ is called a \emph{tight set of Type I}. These sets are exactly the ones meeting the upper bound of Theorem~\ref{thm:interlacing}
    \item  If $S$ is a set of vertices satisfying  $\frac{|\partial S|}{|S|}=(1-\frac{|S|}{n})\mu_2$, then $S$ is called a \emph{tight set of Type II}. These sets are exactly the ones meeting the lower bound of Theorem~\ref{thm:interlacing}. 
\end{itemize}
Following \cite{Bamberg2007,Bamberg2009,Payne}, we simply write \emph{tight set} without further specification, to refer to a tight set of Type II. However, be aware that the notions of a tight set (that is, a tight set of Type II) and tight set of Type I are complementary to each other, as the next result illustrates.
\begin{lemma}\label{lem:tightcomplement}
    A tight set of Type I is a tight set 
    in the complementary graph.
\end{lemma}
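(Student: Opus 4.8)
The plan is to reduce the statement to a single edge-counting identity combined with the standard relation between the Laplacian spectra of a regular graph and its complement. Throughout, let $G$ be a $k$-regular graph on $n$ vertices with Laplacian eigenvalues $0=\mu_1\le\dots\le\mu_n$, let $\bar G$ denote its complement, and write $s=|S|$. Since $G$ is regular, $\bar G$ is $(n-1-k)$-regular, so the notion of a tight set applies to $\bar G$ as well, and its relevant parameter is its own second-smallest Laplacian eigenvalue $\bar\mu_2$.

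First I would record the spectral relation. Using $L(G)+L(\bar G)=nI-J$, the all-ones vector spans the kernel of both Laplacians, while on its orthogonal complement any eigenvector $v$ of $L(G)$ with $L(G)v=\mu v$ satisfies $L(\bar G)v=(nI-J)v-L(G)v=(n-\mu)v$. Hence the non-kernel part of the Laplacian spectrum of $\bar G$ is $\{n-\mu_i : 2\le i\le n\}$. Sorting this multiset together with the zero eigenvalue, and using that $\mu_n\le n$, the second-smallest Laplacian eigenvalue of $\bar G$ is
\[
\bar\mu_2 = n-\mu_n .
\]

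Next I would exploit the fact that for any vertex subset $S$, each pair with exactly one endpoint in $S$ is an edge of precisely one of $G$ and $\bar G$, so that $|\partial_G S|+|\partial_{\bar G}S|=s(n-s)$. Assuming that $S$ is a tight set of Type~I in $G$ means $|\partial_G S|=s\left(1-\tfrac{s}{n}\right)\mu_n=\tfrac{s(n-s)}{n}\mu_n$. Substituting this into the edge-counting identity yields $|\partial_{\bar G}S|=s(n-s)\left(1-\tfrac{\mu_n}{n}\right)=\tfrac{s(n-s)}{n}(n-\mu_n)$, and therefore
\[
\frac{|\partial_{\bar G}S|}{|S|}=\left(1-\frac{s}{n}\right)(n-\mu_n)=\left(1-\frac{|S|}{n}\right)\bar\mu_2 ,
\]
which is exactly the defining equation of a tight set of Type~II in $\bar G$.

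The computation is short, so there is no genuine obstacle; the only point requiring care is the sorting step that identifies $\bar\mu_2$ with $n-\mu_n$ (and not with $n-\mu_2$), together with the degenerate case $\mu_n=n$, i.e.\ $\bar G$ disconnected, where $\bar\mu_2=0$ and both sides of the displayed equation vanish, so that the tight-set condition correctly reduces to $|\partial_{\bar G}S|=0$.
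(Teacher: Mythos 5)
Your proof is correct and follows essentially the same route as the paper, which likewise combines the identity $|\partial_G S|+|\partial_{\bar G}S|=|S|(n-|S|)$ with the complementary Laplacian spectrum relation $\mu_2(\bar G)=n-\mu_n(G)$. The only difference is that you derive the spectral relation from $L(G)+L(\bar G)=nI-J$ and note the degenerate case $\mu_n=n$ explicitly, whereas the paper cites both identities without proof.
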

\begin{proof}
    Since \(\mu_i(G^c)+\mu_{n+2-i}(G)=n\) and \(|\partial_{G^c}(S)|+|\partial_G(S)|=|S|\cdot\left(n-|S|\right)\), we have that
    \[\frac{|\partial_G(S)|}{|S|}=\left(1-\frac{|S|}{n}\right)\mu_2(G)\quad\Longleftrightarrow\quad\frac{|\partial_{G^c}(S)|}{|S|}=\left(1-\frac{|S|}{n}\right)\mu_n(G^c).\qedhere\]
\end{proof}

The notion of tight sets was first introduced in the context of generalized quadrangles in \cite{Payne}, then extended to polar spaces of higher rank in \cite{Drudge} and later to regular graphs in \cite{intriguing}. 
By definition of a tight set, we have the following.

\begin{corollary}\label{cor:tight}
    If \(G\) is a regular graph on $n$ vertices, then \(i(G) \geq\frac12\mu_2\) with equality if and only if there is a tight set of size $\frac{n}{2}$. 
\end{corollary}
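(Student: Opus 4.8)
The plan is to read the entire statement off the lower bound of Theorem~\ref{thm:interlacing}, which asserts that $i_G(S)=\frac{|\partial S|}{|S|}\ge\left(1-\frac{|S|}{n}\right)\mu_2$ for every vertex subset $S$, together with the definition of a tight set (of Type II) as precisely a set attaining this bound. First I would establish the inequality $i(G)\ge\frac12\mu_2$. Let $S$ be any set admissible in the definition of $i(G)$, so $1\le|S|\le\frac{n}{2}$ and hence $1-\frac{|S|}{n}\ge\frac12$. Since $\mu_2\ge0$, Theorem~\ref{thm:interlacing} gives $i_G(S)\ge\left(1-\frac{|S|}{n}\right)\mu_2\ge\frac12\mu_2$, and taking the minimum over all such $S$ yields $i(G)\ge\frac12\mu_2$.

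For the equivalence, the "if" direction is immediate: a tight set $S$ with $|S|=\frac{n}{2}$ satisfies $i_G(S)=\left(1-\frac12\right)\mu_2=\frac12\mu_2$ by definition, and it is admissible, so $i(G)\le\frac12\mu_2$; combined with the lower bound this forces $i(G)=\frac12\mu_2$. For the "only if" direction, suppose $i(G)=\frac12\mu_2$ and let $S$ attain the minimum, so $1\le|S|\le\frac{n}{2}$. Chaining the two inequalities from the first paragraph gives $\frac12\mu_2=i_G(S)\ge\left(1-\frac{|S|}{n}\right)\mu_2\ge\frac12\mu_2$, so both must be equalities. Equality in the first inequality is exactly the definition of $S$ being a tight set, while equality in the second reads $\left(1-\frac{|S|}{n}\right)\mu_2=\frac12\mu_2$, which forces $|S|=\frac{n}{2}$ as soon as $\mu_2\ne0$. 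Thus $S$ is a tight set of size $\frac{n}{2}$. Note that the parity issue takes care of itself: if $n$ is odd then every admissible $S$ has $1-\frac{|S|}{n}>\frac12$, so the bound is strict and, consistently, no set of size $\frac{n}{2}$ exists.

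The one point requiring care, and the only real obstacle, is the degenerate case $\mu_2=0$, i.e. $G$ disconnected. Then the second equality above no longer pins down $|S|$, and indeed a tight set of size exactly $\frac{n}{2}$ (a union of components of total size $\frac{n}{2}$) need not exist even though $i(G)=0=\frac12\mu_2$. I would therefore either phrase the result for connected $G$, where $\mu_2>0$ by the Perron--Frobenius Theorem so the argument goes through verbatim, or record the disconnected case separately. Apart from this, the corollary is a direct consequence of Theorem~\ref{thm:interlacing} and the definition of a tight set, with no substantive computation involved.
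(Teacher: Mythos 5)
Your argument is exactly the paper's: the corollary is read directly off the lower bound of Theorem~\ref{thm:interlacing} together with the observation that $1\le|S|\le\frac n2$ is equivalent to $1-\frac{|S|}{n}\ge\frac12$, and the paper's proof is precisely this one-liner. Your extra caveat about $\mu_2=0$ is a fair point about the statement itself rather than the proof --- for a disconnected regular graph such as $C_3\cup C_5$ one has $i(G)=0=\frac12\mu_2$ with no tight set of size $\frac n2$, so the ``only if'' direction implicitly assumes connectedness, which the paper does not flag.
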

\begin{proof}
    This follows from Theorem~\ref{thm:interlacing} since \(|S|\leq\frac{n}2\Leftrightarrow1-\frac{|S|}{n} \ge \frac 12\).
\end{proof}

Note that, if $S$ is a tight set. then every vertex in $S$ has \(i(G)=\frac12\mu_2\) neighbours outside $S$ and \(k-\frac12\mu_2\) neighbours within $S$, where $k$ is the valency of $G$.

Corollary~\ref{cor:tight} is a powerful tool to determine the isoperimetric number in any graph class where tight sets are understood, as we illustrate in Sections \ref{sec:grassman} and \ref{subsec:polar}. As mentioned above, tight sets were originally introduced in a geometrical context, so it should not come as a surprise that it is exactly for those geometrical graph classes that we can say a lot about the existence of tight sets, and hence about graph expansion. Despite that, we are not aware of previous research studying the isoperimetric number of geometric graphs. The remainder of this section is dedicated to a number of cases in which the exact isoperimetric number can be determined.

\subsection{Grassmann graphs}\label{sec:grassman}

Let \(q\) be a prime power and \(k\leq n\). The \emph{q-Johnson graph} or \emph{Grassmann graph $J_q(n,k)$} has as vertices the \(k\)-spaces of $\mathbb{F}_q^n$, where two vertices are adjacent if they intersect in a \((k-1)\)-space. This graph is strongly regular with 
smallest positive Laplacian eigenvalue equal to $\mu_2=\lambda_1-\lambda_2=\frac{q^n-1}{q-1}$, see for example \cite{boolean1}.

A \emph{Cameron-Liebler line class} in \(\mathrm{PG}(n-1,q)\) is a set of lines whose characteristic vector is contained in the rowspan (over \(\mathbb{R}\)) of the point-line incidence matrix of \(\mathrm{PG}(n-1,q)\).
Drudge \cite{Drudge} showed that a tight set in \(J_q(4,2)\) is the same as a Cameron-Liebler line class in PG\((3,q)\).

Let \(\mathcal{L}\) denote the set of lines in PG\((3,q)\), so \(|\mathcal{L}|=(q^2+1)(q^2+q+1)\). We will now use the following result on Cameron-Lieber line classes:

\begin{theorem}[\cite{CLbruen}]\label{thm:CLbruen}
    There is a Cameron-Liebler line class of 
    size \(\frac12|\mathcal{L}|\) in \(\mathrm{PG}(3,q)\) if \(q\) is odd.
\end{theorem}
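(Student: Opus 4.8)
The plan is to construct an explicit Cameron-Liebler line class of size $\frac12|\mathcal{L}|$ in $\mathrm{PG}(3,q)$ for odd $q$. The most natural candidate arises from a symplectic polarity: recall that a Cameron-Liebler line class with parameter $x$ can be described combinatorially as a set $\mathcal{C}$ of lines such that every line of $\mathrm{PG}(3,q)$ meets exactly $x + (q^2-1)\cdot[\ell\in\mathcal{C}]$ lines of $\mathcal{C}$ (an equivalent formulation to the rowspan definition given above). Since we want $|\mathcal{C}| = \frac12|\mathcal{L}| = \frac12(q^2+1)(q^2+q+1)$, and since a Cameron-Liebler line class of parameter $x$ has size $x(q^2+q+1)$, we need parameter $x = \frac12(q^2+1)$, which is an integer precisely when $q$ is odd. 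This already pinpoints why the hypothesis on the parity of $q$ is essential.

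First I would recall the standard classification of ``trivial'' or small-parameter Cameron-Liebler line classes: classes of parameter $0,1,2$ are well understood (the empty set, the star/pencil of lines through a point together with a plane's lines, etc.), but these give sizes far below $\frac12|\mathcal{L}|$. Hence a new construction is needed for the halfway size. The key construction I would invoke is the one built from a \emph{symplectic polarity} (equivalently, from the set of lines that are totally isotropic, or from lines lying on one side of a hyperbolic quadric). Specifically, the classical construction takes the set of lines meeting a fixed line in a prescribed incidence pattern, or uses the two families of lines determined by a reflexive form; for $q$ odd, choosing the lines whose Plücker coordinates satisfy a suitable quadratic condition produces a class of exactly parameter $\frac12(q^2+1)$.

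The main steps are therefore: (i) write down the candidate set $\mathcal{C}$ explicitly, most cleanly via Plücker coordinates on the Klein quadric $\mathcal{Q}^+(5,q)$, where lines of $\mathrm{PG}(3,q)$ correspond to points of the quadric; (ii) verify that the characteristic vector of $\mathcal{C}$ lies in the rowspan of the point-line incidence matrix, or equivalently verify the combinatorial counting condition that every line meets the correct number of lines of $\mathcal{C}$; and (iii) confirm the cardinality is $\frac12|\mathcal{L}|$. Under the Klein correspondence, the incidence structure and the counting become an eigenvalue condition on the quadric, so I would translate the Cameron-Liebler property into the statement that $\chi_{\mathcal{C}}$ is a combination of the all-ones vector and an eigenvector of the collinearity graph of $\mathcal{Q}^+(5,q)$; the parity hypothesis enters because the relevant hyperplane section splitting the quadric into two halves of equal size exists exactly when $q$ is odd.

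The hard part will be step (ii): producing a clean verification that the constructed set genuinely satisfies the Cameron-Liebler condition rather than merely having the right size. A set of half the lines is not automatically a Cameron-Liebler line class, so the content of the theorem is entirely in this algebraic/combinatorial check. In practice I would not reprove this from scratch but rather cite the explicit construction from the referenced work \cite{CLbruen}, where such a class is built (for odd $q$) using a Baer-type or quadric-based partition of the line set; the role of this excerpt is simply to import that existence result. Thus the ``proof'' here is really an appeal to the cited construction, and the forward-looking plan is to isolate the parameter computation $x=\frac12(q^2+1)$ showing why half-size forces odd $q$, and then defer the existence of an actual class with this parameter to \cite{CLbruen}.
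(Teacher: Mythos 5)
The paper does not actually prove this statement: it is imported verbatim from \cite{CLbruen}, so your proposal---which likewise defers the existence of the class to that reference---matches the paper's treatment, and your integrality computation $x=\tfrac12(q^2+1)$ correctly isolates why the parity hypothesis on $q$ is necessary. One small caveat: the actual Bruen--Drudge construction is not the symplectic-polarity/Pl\"ucker-coordinate one you sketch, but rather takes all secant lines to an elliptic quadric of $\mathrm{PG}(3,q)$ together with a suitable half of the $q+1$ tangent lines at each of its points (chosen by a quadratic-residue condition); since you ultimately cite the construction rather than reprove it, this misidentification does not affect the validity of your argument.
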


\begin{corollary}
    \(i(J_q(4,2))=\frac12(q^2+1)(q+1)\) if \(q\) is odd.
\end{corollary}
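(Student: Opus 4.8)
The plan is to directly assemble the ingredients laid out above, since the corollary is really a matter of recognizing that the target value is exactly the spectral lower bound $\tfrac12\mu_2$. First I would compute $\tfrac12\mu_2$ explicitly. As $J_q(4,2)$ is strongly regular with $\mu_2=\frac{q^4-1}{q-1}=q^3+q^2+q+1$, factoring yields $\mu_2=(q^2+1)(q+1)$, so the claimed value $\tfrac12(q^2+1)(q+1)$ is precisely $\tfrac12\mu_2$. Hence the corollary is exactly the assertion that the lower bound of Corollary~\ref{cor:tight} is attained for this graph when $q$ is odd.

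By Corollary~\ref{cor:tight}, the equality $i(J_q(4,2))=\tfrac12\mu_2$ holds if and only if $J_q(4,2)$ admits a tight set of size $n/2$, where $n=|\mathcal{L}|=(q^2+1)(q^2+q+1)$ is the number of vertices. At this point I would invoke Drudge's identification recalled above: a tight set in $J_q(4,2)$ is the same thing as a Cameron-Liebler line class in $\mathrm{PG}(3,q)$. It therefore suffices to exhibit a Cameron-Liebler line class of size $\tfrac12|\mathcal{L}|$.

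That existence is precisely Theorem~\ref{thm:CLbruen}, which provides such a line class whenever $q$ is odd. I would also record the small parity check needed for $n/2$ to make sense: for odd $q$ the factor $q^2+1$ is even, so $|\mathcal{L}|$ is even and $\tfrac12|\mathcal{L}|$ is an integer. Combining these, for odd $q$ the graph possesses a tight set of the required size, and Corollary~\ref{cor:tight} then gives $i(J_q(4,2))=\tfrac12\mu_2=\tfrac12(q^2+1)(q+1)$.

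There is no genuinely hard step here: the argument is a short chain of equivalences (isoperimetric equality $\Leftrightarrow$ tight set of size $n/2$ $\Leftrightarrow$ Cameron-Liebler line class of size $\tfrac12|\mathcal{L}|$), capped off by the cited existence theorem. All of the real content has been imported from the literature — Drudge's dictionary between tight sets and Cameron-Liebler line classes, and the Bruen-type existence result \cite{CLbruen} — so the only remaining work is the bookkeeping factorization of $\mu_2$ and the parity observation.
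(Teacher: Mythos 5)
Your proof is correct and follows essentially the same route as the paper: cite Theorem~\ref{thm:CLbruen} for a Cameron--Liebler line class of size $\tfrac12|\mathcal{L}|$, identify it with a tight set via Drudge, and apply Corollary~\ref{cor:tight} with $\mu_2=\frac{q^4-1}{q-1}=(q^2+1)(q+1)$. The added parity check is a nice (if minor) touch that the paper leaves implicit.
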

\begin{proof}
    Let $q$ be odd. By Theorem~\ref{thm:CLbruen}, $J_q(4,2)$ has a tight set of size $\frac12|\mathcal{L}| = \frac12 |V(J_q(4,2)|$. Thus, we can apply Corollary~\ref{cor:tight}, using $\mu_2 = \frac{q^4-1}{q-1} = (q^2+1)(q+1)$.
\end{proof}

If we want to use the same approach for the Grassmann graph $J_q(n,k)$ with \(k\geq3\), one needs a so-called Cameron-Liebler set of \(k\)-spaces \cite{CLk} consisting of half the number of $k$-spaces in PG$(n-1,q)$.
However, no such construction seems to be known for $k \geq 3$. In fact, in some cases it is known that no Cameron-Liebler $k$-set exists \cite{BeuleCLk,JDB25,CLk,boolean1}.

\subsection{Polar graphs}\label{subsec:polar}

Let $\mathcal{P}$ be a classical polar space over $\mathbb{F}_q$. We refer to \cite[Chapter 2]{StronglyBrouwer} for the definitions and basic properties of polar spaces, including spectral data of their (strongly regular) collinearity graphs.

A maximal singular subspace with respect to inclusion is called a \emph{generator}. 
Every generator has the same vector space dimension $r$, which is called the \emph{rank} of the polar space. In particular, any generator has $\frac{q^r-1}{q-1}$ points. We further recall that if $(q,t)$ is the order of $\mathcal{P}$, where \(t \in \{1,q^\frac12,q,q^\frac32,q^2\}\), then \(\mathcal{P}\) has \((tq^{r-1}+1)\cdot\frac{q^r-1}{q-1}\) points.
We consider the collinearity graph $\Gamma(\mathcal{P})$ on the points of a polar space, where two points are adjacent if they are in a common generator. 

To obtain tight sets of the desired size in the collinearity graphs, we use the concept of spreads.
A \emph{partial spread} (of generators) is a set of generators that pairwise intersect trivially. A \emph{spread} (of generators) is a partial spread that covers all points of $\mathcal{P}$. 

\begin{lemma}\label{lem:partialspread}
     If there exists a partial spread 
     covering exactly half the number of points, then the isoperimetric number of the collinearity graph \(\Gamma(\mathcal{P})\) is equal to \(\frac{1}{2}(tq^{r-1}+1)\frac{q^{r-1}-1}{q-1}\).
\end{lemma}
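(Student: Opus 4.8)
The plan is to reduce the statement to Corollary~\ref{cor:tight}. Since $\Gamma(\mathcal P)$ is a strongly regular (hence regular) graph on $n=(tq^{r-1}+1)\tfrac{q^r-1}{q-1}$ vertices, it suffices to produce a tight set of size $\tfrac n2$. I will take $S$ to be the set of points covered by the given partial spread. As the spread generators pairwise intersect trivially and each has $\tfrac{q^r-1}{q-1}$ points, the hypothesis $|S|=\tfrac n2$ forces the spread to consist of exactly $m=\tfrac12(tq^{r-1}+1)$ generators. As spectral input I will use the known data of the collinearity graph \cite{StronglyBrouwer}: its valency is $k=(tq^{r-1}+q)\tfrac{q^{r-1}-1}{q-1}$ and its second largest adjacency eigenvalue is $\lambda_2=q^{r-1}-1$, so that $\mu_2=k-\lambda_2=(tq^{r-1}+1)\tfrac{q^{r-1}-1}{q-1}$.

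The core of the proof is a local count establishing that every point of $S$ has the same number of neighbours outside $S$. Recall that two points of $\mathcal P$ are adjacent in $\Gamma(\mathcal P)$ precisely when each lies in the perp $P^\perp$ of the other, so I will evaluate $|S\cap P^\perp|$ one generator at a time. Fix $P\in S$ and let $\pi$ be the unique spread generator through $P$; since $\pi$ is totally singular, $\pi\subseteq P^\perp$ and it contributes all $\tfrac{q^r-1}{q-1}$ of its points. For any other spread generator $\pi'$, disjointness gives $P\notin\pi'$, and maximality of generators rules out $\pi'\subseteq P^\perp$ (otherwise $\langle P,\pi'\rangle$ would be totally singular of vector dimension $r+1$, exceeding the rank $r$); hence the geometric hyperplane $P^\perp$ meets $\pi'$ in a hyperplane of $\pi'$, contributing exactly $\tfrac{q^{r-1}-1}{q-1}$ points. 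Summing over the $m$ generators yields $|S\cap P^\perp|=\tfrac{q^r-1}{q-1}+(m-1)\tfrac{q^{r-1}-1}{q-1}$, a quantity independent of $P$.

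Removing $P$ itself and subtracting from the valency, and using $\tfrac{q^r-1}{q-1}=q\cdot\tfrac{q^{r-1}-1}{q-1}+1$ to collect terms, the number of neighbours of $P$ outside $S$ equals $k-(q+m-1)\tfrac{q^{r-1}-1}{q-1}$. Substituting $k=(tq^{r-1}+q)\tfrac{q^{r-1}-1}{q-1}$ and $m=\tfrac12(tq^{r-1}+1)$, this collapses to $\tfrac12(tq^{r-1}+1)\tfrac{q^{r-1}-1}{q-1}=\tfrac12\mu_2$. As this holds for every $P\in S$, we get $\tfrac{|\partial S|}{|S|}=\tfrac12\mu_2$, so $S$ is a tight set of size $\tfrac n2$ and Corollary~\ref{cor:tight} gives $i(\Gamma(\mathcal P))=\tfrac12\mu_2=\tfrac12(tq^{r-1}+1)\tfrac{q^{r-1}-1}{q-1}$. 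The only non-bookkeeping step, and hence the main obstacle, is the geometric claim that a spread generator not through $P$ meets $P^\perp$ in exactly a hyperplane; once this is in hand, the remainder is arithmetic with the counting function $\tfrac{q^j-1}{q-1}$ and a check that the resulting constant equals $\tfrac12\mu_2$.
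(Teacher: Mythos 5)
Your proof is correct, but it takes a genuinely different route from the paper's. The paper's argument is essentially a citation: it invokes Drudge's characterization of tight sets in polar spaces to assert that a single generator is a tight set and that a disjoint union of tight sets is again tight, so the point set of the partial spread is a tight set of size $\tfrac n2$; Corollary~\ref{cor:tight} and the spectral data from \cite{StronglyBrouwer} then finish the job. You instead verify tightness from scratch by a local count: for each point $P$ of $S$ you compute $|S\cap P^\perp|$ generator by generator, using that the spread generator through $P$ lies entirely in $P^\perp$ while every other spread generator meets $P^\perp$ in exactly a hyperplane of itself (your justification via maximality of generators is the right one, and it survives the even-characteristic parabolic case since a totally singular subspace avoids the nucleus). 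This shows every point of $S$ has exactly $\tfrac12\mu_2$ neighbours outside $S$, which is precisely the tightness condition for $|S|=\tfrac n2$, and your arithmetic checks out against the standard parameters $k=(tq^{r-1}+q)\tfrac{q^{r-1}-1}{q-1}$ and $\lambda_2=q^{r-1}-1$. What your approach buys is self-containedness — the reader does not need Drudge's Theorem~8.1, and the computation makes explicit why the constant $\tfrac12(tq^{r-1}+1)\tfrac{q^{r-1}-1}{q-1}$ appears; in fact it shows more, namely that $S$ is an intriguing set whose quotient-matrix entries you have computed. What the paper's route buys is brevity and the observation that tightness of partial spreads holds for partial spreads of any size, not just those covering half the points.
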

\begin{proof}
    A generator is a tight set, and the disjoint union of tight sets is again a tight set, see \cite[Theorem~8.1]{Drudge} for a characterization of tight sets in polar spaces.
    Hence, partial spreads are tight.
     
      We conclude from \ref{cor:tight} that $i(G) = \frac12\mu_2$, which can be computed to be equal to \(\frac12(tq^{r-1}+1)\frac{q^{r-1}-1}{q-1}\) using the spectral data in \cite[Theorem 2.2.12]{StronglyBrouwer}.     
\end{proof}

If there is a (full) spread with an even number of generators (in which case $q$ must be odd), the condition in Lemma~\ref{lem:partialspread} is met by taking half of a spread.
In fact, in the following examples, the polar space always has a spread.  
We refer to \cite[Table~7.4]{HT2016} for an overview of existence results on spreads in polar spaces. 

\begin{itemize}
    \item The collinearity graph of the hyperbolic quadric \(Q^+(3,q)\) ($t=1$) has isoperimetric number \(i(\Gamma(Q^+(3,q)))=\frac{q+1}{2}\) if \(q\) is odd, by taking half the number of lines of a regulus (line spread). Note that this graph is in fact isomorphic to the square lattice \(K_{q+1}\times K_{q+1}\cong H(2,q+1)\), so this is a specific instance of the results on Hamming graphs (see Section \ref{sec:hamming}).
    \item For the elliptic quadric \(Q^-(5,q)\) ($t=q^2$) we have \(i(\Gamma(Q^-(5,q)))=\frac{q^3+1}{2}\) if \(q\) is odd, since $Q^-(5,q)$ has a spread of $q^3+1$ lines.
    \item The parabolic quadric \(Q(6,q)\) ($t=q$) when $q$ is odd and either prime or $q \equiv 0,2 \mod 3$, has a spread of $q^3+1$ generators, so its isoperimetric number is $i(\Gamma(Q(6,q)) = \frac12(q^3+1)(q+1)$.
    \item  The hyperbolic quadric $Q^+(7,q)$ ($t=1$), in the case that $q$ is odd and either prime or $q \equiv 0,2 \mod 3$, has a spread of $q^3+1$ generators. So we have $i(\Gamma(Q^+(7,q)) = \frac 12 (q^3+1)(q^2+q+1)$.
    \item The symplectic polar space $W(2r-1,q)$ ($t=q$) has a spread of size $q^r+1$. Therefore, if $q$ is odd, we have $i(\Gamma(W(2r-1,q))) = \frac12(q^r+1)\frac{(q^{r-1}-1)}{q-1}$.
\end{itemize}

The following examples serve to show that there are also cases where one can find a tight set of size $\frac12 |\Gamma(\mathcal{P})|$ without large partial spreads existing:
\begin{example}\label{exa:Q+(5,q)}
    The maximum size of a partial spread in \(Q^+(5,q)\) is two. However, by the Klein correspondence, we have that $\Gamma(Q^+(5,q))\cong J_q(4,2)$ \cite{Drudge}. So if \(q\) is odd, there exists a tight set by Theorem~\ref{thm:CLbruen} and \(i(\Gamma(Q^+(5,q)))=\frac12(q^2+1)(q+1)\).
\end{example}
\begin{example}
    The parabolic quadric $Q(4,q)$, \(q\) odd, does not have spreads, but it does have a tight set of size half the number of vertices. It also arises from the same Cameron-Liebler line class as in Theorem~\ref{thm:CLbruen}, see \cite{Bamberg2009}. Hence $i(\Gamma(Q(4,q)) = \frac12(q^2+1)$.
\end{example}

The complement of the collinearity graph of a polar space is again strongly regular and worth investigating. Tight sets 
in these graphs are tight sets of Type I in the collinearity graphs, see~Lemma~\ref{lem:tightcomplement}.
Still, we can derive the isoperimetric number of some of these complements, because tight sets of Type I are equivalent to so-called \(m\)-ovoids \cite[Proposition~2.1]{DeBruyn2008}. These sets are equivalent to so-called \(m\)-ovoids \cite[Proposition~2.1]{DeBruyn2008}. 
An \emph{\(m\)-ovoid} is a set of points $\mathcal{O}$ such that every generator meets $\mathcal{O}$ in $m$ points.

It is known that an $m$-ovoid has $|\mathcal{O|} = m(tq^{r-1}+1)$ points \cite{StronglyBrouwer}, so an $m$-ovoid containing half of the points of the polar space has $m = \frac12\frac{q^r-1}{q-1}$. If such an $m$-ovoid \(\mathcal{O}\) exists, then a double counting of the collinear pairs \((p_1,p_2)\) with \(p_1\in\mathcal{O}\) and \(p_2\notin\mathcal{O}\), results in an isoperimetric number that is equal to \(\frac12t(q^2-1)\). 
We obtain the following examples:
\begin{itemize}
    \item  The hyperbolic quadric $Q^+(3,q)$ ($t=1$), whose collinearity graph is isomorphic to the Hamming graph $H(2,q+1)$, contains an $\frac{q+1}{2}$-ovoid whenever $q$ is odd. The construction is circulant; identifying the quadric with the grid \(\mathbb{Z}/(q+1)\mathbb{Z}\times\mathbb{Z}/(q+1)\mathbb{Z}\), we can set $$\mathcal{O} = \left\{(i,j)\in\mathbb{Z}/(q+1)\mathbb{Z}\times\mathbb{Z}/(q+1)\mathbb{Z}\colon\, i-j \in \{0,\dots,\frac{q+1}{2}-1\}\right\}$$
    which is indeed a $\frac{q+1}{2}$-ovoid. 
    Hence, we get $i(\overline{\Gamma(Q^+(3,q))})=i(\overline{H(2,q+1)}) = \frac{q^2-1}{2}$ if \(q\) is odd.
    \item For the parabolic quadric $Q(4,q)$ ($t=q$) when $q$ is odd, there exist $\frac{q+1}{2}$-ovoids, see \cite[Section 3]{Bamberg2009}. We get $i(\overline{\Gamma(Q(4,q))})=\frac{q^3-q}{2}$.
    \item Consider the elliptic quadric $Q^-(5,q)$ ($t=q^2$) for $q$ odd. There exist $\frac{q+1}{2}$-ovoids , see \cite{Coss2005}. Hence we get that $i(\overline{\Gamma(Q^-(5,q))}) = \frac{q^4-q^2}{2}$.
    \item The symplectic quadric $W(3,q)$ ($t=q$) where, again, $q$ is odd, is also known to have $\frac{q+1}{2}$-ovoids, see \cite{Coss2008}. This implies $i(\overline{\Gamma(W(3,q)}) = \frac{q^3-q}{2}$. 
\end{itemize}

The results of this subsection on polar graphs are summarized in Table~\ref{tab:exactnumbergeom}.

\begin{table}[H]
\renewcommand{\arraystretch}{1.5}
    \centering
    \begin{tabular}{l|l}
        \(G\) & \(i(G)\)
        \\
        \hline\hline
        \(J_q(4,2)\cong\Gamma(Q^+(5,q))\), \(q\) odd & \(\frac12(q^2+1)(q+1)\)\\
        \(\Gamma(Q^+(3,q))\) & \(\lceil\frac{q+1}2\rceil\)\\
        \(\Gamma(Q^-(5,q))\), \(q\) odd & \(\frac12(q^3+1)\)\\
        \(\Gamma(Q(6,q))\), \(q\) odd and ($q$ prime or $q \equiv 0,2 \mod 3$) & \(\frac12(q^3+1)(q+1)\)\\
        \(\Gamma(Q^+(7,q))\), \(q\) odd and ($q$ prime or $q \equiv 0,2 \mod 3$) & \(\frac12(q^6-1)/(q-1)\)\\
        \(\Gamma(W(2r-1,q))\), \(q\) odd & \(\frac12(q^r+1)(q^{r-1}-1)/(q-1)\)\\
        \(\Gamma(Q(4,q))\), \(q\) odd & \(\frac12(q^2+1)\)\\
        \(\overline{\Gamma(Q^+(3,q))}\), \(q\) odd & \(\frac12(q^2-1)\)\\
        \(\overline{\Gamma(Q(4,q))}\), \(q\) odd & \(\frac12q(q^2-1)\)\\
        \(\overline{\Gamma(Q^-(5,q))}\), \(q\) odd & \(\frac12q^2(q^2-1)\)\\
        \(\overline{\Gamma(W(3,q))}\), \(q\) odd & \(\frac12q(q^2-1)\)
    \end{tabular}
    \caption{The isoperimetric number \(i(G)\) of some families of graphs from geometries, attaining the bound in Corollary~\ref{cor:tight}.}
    \label{tab:exactnumbergeom}
\end{table}

\section{The isoperimetric number is not determined by the spectrum}\label{sec:NDS}

There are many graph parameters that can be bounded by eigenvalues. One can wonder whether such properties or parameters are actually determined by the spectrum. Typically these are not. Instances of such properties include: distance-regularity \cite{h1996}, having a given diameter \cite{h1996}, admitting a perfect matching \cite{bch2015}, Hamiltonicity \cite{lwyl2010}, vertex or edge-connectivity \cite{h2020} and having a given zero forcing number \cite{MRC}.
In this section, we add the isoperimetric number to that list.
In fact, we show that there is an infinite family of pairs of regular connected cospectral graphs that have different isoperimetric numbers. We note that this has already been proven for the vertex-isoperimetric number in \cite{VertexNotIso}. 

The hypercube \(Q_n\cong\mathrm{H}(n,2)\) has isoperimetric number \(i(Q_n)=1\) \cite{M1989}. Indeed, any subcube \(Q_{n-1}\) is a tight set of size \(2^{n-1}\), see Corollary~\ref{cor:tight}. Consider the tesseract \(Q_4\). It was shown in \cite{Hoffman63} that $Q_4$ has exactly one cospectral mate, which we denote by \(\widetilde{Q_4}\). It can be obtained by GM-switching \cite{GM82} on the four neighbours of a fixed vertex (see \cite[Chapter 1.8.1]{spectra} for a figure).

\begin{proposition}
There are infinitely many pairs of regular connected graphs that have the same adjacency and Laplacian spectrum but different isoperimetric numbers.
\end{proposition}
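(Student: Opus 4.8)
The plan is to exhibit one explicit cospectral pair whose isoperimetric numbers differ, and then amplify it into an infinite family by taking Cartesian products with complete graphs, using Corollary~\ref{cor:tight} as the main tool in both steps. The reformulation I would lean on throughout is that, for a $k$-regular graph on $n$ vertices, a tight set of size $n/2$ is the same thing as a balanced $\pm1$ eigenvector for $\mu_2$: if $|S|=n/2$ then $x=\mathbf 1_S-\mathbf 1_{S^c}$ is orthogonal to the all-ones vector and has Rayleigh quotient $4|\partial S|/n$, which equals $\mu_2$ exactly when $i_G(S)=\tfrac{1}{2}\mu_2$. Since $\mu_2$ is the minimum of the Rayleigh quotient over $\mathbf 1^\perp$, such an $x$ is then a $\mu_2$-eigenvector, and conversely. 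Thus by Corollary~\ref{cor:tight}, $i(G)>\tfrac{1}{2}\mu_2$ if and only if the $\mu_2$-eigenspace contains no $\pm1$ vector.

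For the base pair I would take $Q_4$ and its unique cospectral mate $\widetilde{Q_4}$. Laplacian cospectrality forces $\widetilde{Q_4}$ to be $4$-regular and connected (the degree is read from $\lambda_1$, the number of components from the multiplicity of the zero Laplacian eigenvalue), with $\mu_2=2$, so the lower bound is $1$; the subcube shows $i(Q_4)=1$. To separate the two graphs I would show $\widetilde{Q_4}$ has no tight set of size $8$. The clean route is to realise the GM-switch as a conjugation: the switching set $C=N(x_0)$ is independent in $Q_4$, so $\widetilde A=QAQ$ for the orthogonal involution $Q=\operatorname{diag}(\tfrac{1}{2}J_C-I_C,\,I)$, and hence the adjacency $2$-eigenspace of $\widetilde{Q_4}$ is $QV_2$, where $V_2$ is the $2$-eigenspace of $Q_4$. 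A short Fourier argument shows the only $\pm1$ vectors in $V_2$ are the degree-one characters $\pm\chi_i$, and applying $Q$ to any of these produces entries $0$ and $2$ on $C$; more generally one checks that a vector of $V_2$ is $\pm1$ on the twelve vertices outside $C$ only if it already equals some $\pm\chi_i$. Hence $QV_2\cap\{\pm1\}^{16}=\varnothing$, so $\widetilde{Q_4}$ has no size-$8$ tight set and $i(\widetilde{Q_4})>1=i(Q_4)$.

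To obtain infinitely many pairs I would set $G_m=Q_4\times K_m$ and $\widetilde G_m=\widetilde{Q_4}\times K_m$ for $m\ge 3$. Cartesian products add eigenvalues, so each pair is cospectral for both the adjacency and the Laplacian matrix, both graphs are regular and connected, and the orders $16m$ are unbounded, giving infinitely many pairs. Because $\mu_2(K_m)=m>2$, the only way to write $2$ as $\mu_i(\cdot)+\mu_j(K_m)$ is $2+0$, so the $\mu_2=2$ eigenspace of each product is exactly $V_2(\cdot)\otimes\langle\mathbf 1\rangle$. A vector $v\otimes\mathbf 1$ is $\pm1$ precisely when $v$ is, so a balanced $\pm1$ $\mu_2$-eigenvector of the product exists iff the $Q_4$- or $\widetilde{Q_4}$-factor already has one. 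Therefore $Q_4\times K_m$ inherits the subcube tight set $S\times V(K_m)$ and has $i=1$, whereas $\widetilde{Q_4}\times K_m$ has no tight set of size $8m$ and hence $i>1$ by Corollary~\ref{cor:tight}.

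I expect the only genuine obstacle to be the base-case claim that $\widetilde{Q_4}$ carries no size-$8$ tight set; everything afterwards is formal. This is a finite verification, but not an automatic one, since GM-switching preserves the whole spectrum and could a priori preserve a $\pm1$ eigenvector too. Packaging the switch as the conjugation $\widetilde A=QAQ$ reduces the question to whether $QV_2$ meets $\{\pm1\}^{16}$, which the character computation settles cleanly; a direct search over balanced subsets of the $16$ vertices would be an acceptable fallback. The product step then needs only the inequality $\mu_2(K_m)>2$, which is exactly what confines the $\mu_2$-eigenspace to the switched factor and makes the gap in isoperimetric number survive the amplification.
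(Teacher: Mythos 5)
Your proof is correct, and while it shares the paper's overall architecture (the base pair $Q_4$ versus $\widetilde{Q_4}$, reduction to Corollary~\ref{cor:tight}, amplification by Cartesian products), both of the key steps are executed by genuinely different arguments. For the base case, the paper rules out a tight set of size $8$ in $\widetilde{Q_4}$ by a purely combinatorial neighbourhood-chasing argument through the switch, forcing $|S|\geq 10$; you instead reformulate size-$\frac{n}{2}$ tight sets as balanced $\pm1$ vectors in the $\mu_2$-eigenspace, realise GM-switching as the conjugation $\widetilde A=QAQ$, and show via the character description of the $2$-eigenspace of $Q_4$ that $QV_2$ contains no $\pm1$ vector. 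This reformulation is a nice reusable observation, and your reduction is sound: since $Q$ is the identity off the switching set $C$, any $\pm1$ vector in $QV_2$ would restrict to a $\pm1$ vector of $V_2$ on the twelve vertices outside $C$, which your (verifiable) Fourier computation forces to be $\pm\chi_i$, and $Q(\pm\chi_i)$ takes values $0$ and $\pm2$ on $C$. For the amplification, the paper multiplies by $H(n,4)$ and invokes Mohar's formula $i(G\times K_{2n})=\min\{i(G),n\}$ to evaluate the products directly, whereas you multiply by $K_m$ ($m\geq3$) and rerun the tight-set criterion inside the product, using $\mu_2(K_m)=m>2$ to confine the $\mu_2$-eigenspace to $V_2(\cdot)\otimes\langle\mathbf 1\rangle$. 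Your route is self-contained (no appeal to Mohar's product theorem) at the cost of the eigenspace bookkeeping; the paper's is shorter where it can lean on the known formula and on the explicit value $i(\widetilde{Q_4})=5/4$. Both give infinitely many valid pairs, and in your version non-isomorphism of each pair is automatic from the differing isoperimetric numbers.
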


\begin{proof}
    The graphs
\[Q_4\times H(n,4)\quad\text{ and }\quad\widetilde{Q_4}\times H(n,4),\]
where $\times$ denotes the Cartesian graph product, form an infinite family of pairs of regular connected cospectral graphs because the spectrum of a product is determined by that of its factors. The graphs are regular, so they are also cospectral for the Laplacian spectrum. We have that $i(Q_4)=1$, and it is known that $i(G \times K_{2n}) = \min \{i(G),n\}$ for all \(n\geq2\), see \cite[Theorem~5.1]{M1989}. In particular, $i(Q_4 \times H(n,4)) = 1$ and $i(\widetilde{Q_4}\times H(n,4)) = \min\{i(\widetilde{Q_4}),2\}$. We prove that \(i(\widetilde{Q_4})>1\), and hence \(i(Q_4 \times H(n,4))\neq i(\widetilde{Q_4}\times H(n,4))\). One can check by computer that \(i(\widetilde{Q_4})=5/4\), but here we provide a purely mathematical proof.

By Corollary~\ref{cor:tight}, it suffices to prove that \(\widetilde{Q_4}\) does not have a tight set of size eight. Suppose, by contradiction, that \(S\) is a tight set of size eight in \(\widetilde{Q_4}\). Every vertex in \(S\) has three neighbours in \(S\) and one neighbour in \(S^c\) (and vice versa).
Let \(v\) be the chosen vertex such that \(\widetilde{Q_4}\) is obtained by GM-switching \cite{GM82} on \(Q_4\) with respect to the four neighbours of \(v\). We may assume that \(v\in S\), otherwise we can consider \(S^c\) instead of \(S\). Now \(v\) has exactly one neighbour \(w\) with \(w\notin S\). Note that, in the original graph \(Q_4\), \(w\) is contained in a unique cube that does not contain \(v\). Let \(u_1,u_2\) and \(u_3\) be the three neighbours of \(w\) in that cube. They were originally adjacent to two neighbours of \(v\) (including \(w\)), so after switching they are adjacent to two neighbours of \(v\) in \(S\). Hence they are also contained in \(S\). Similarly, the vertices that used to be at distance two from \(w\) in the cube in \(Q_4\) are (still) adjacent to two of the vertices of \(\{u_1,u_2,u_3\}\), so they are also in \(S\). All the considered vertices are different, so we proved that \(|S|\geq10\), a contradiction.
\end{proof}

\section{The isoperimetric number of random split graphs}\label{sec:split}


In this section, we write $o(1)$ for quantities that tend to $0$ as $n \to \infty$ and say that an event holds with high probability (w.h.p.) if its probability tends to $1$ as $n \to \infty$.

Almost all $k$-regular graphs have isoperimetric number of order $k/2 - \Theta(\sqrt{n})$~\cite{Bollobas1988, Alon1986}. For irregular graphs, the situation seems to change. The simplest upper bound for the isoperimetric number is achieved by taking a vertex of minimum degree, and there is some evidence that this upper bound is typically tight for irregular graphs. In particular, \cite{BHKL2008} studied the isoperimetric number during a specific random graph process. In this process they start with the empty graph $E_n$, and at each step an edge is added uniformly at random from all the missing edges. The authors of \cite{BHKL2008} show that for almost all graphs in the process, $i(G) = \delta(G)$ as long as $\delta(G) = o(\log n)$. In this section we study a different class of irregular graphs and show that the random split graph satisfies $i(G) = \delta(G)$ with high probability. 

A graph $G = (V,E)$ is called \emph{split} if the vertex set can be partitioned into two disjoint subsets $V_1$ and $V_2$ such that $G[V_1] \cong K_k$ and $G[V_2] \cong E_\ell$ for some \(k\) and \(\ell\) \cite[Definition~3.2.3]{BLS1999}. Split graphs have attracted considerable attention within the spectral graph theory community in recent years, frequently appearing as extremal examples in eigenvalue maximization problems~\cite{byrne2024}, and playing a key role in the resolution of the Grone–Merris conjecture~\cite{bai2011}. Based on the work of Grone and Merris, in~\cite{AFHP2014} the authors showed a spectral upper bound for $i(G)$. As a byproduct of our results, we prove that this bound is tight w.h.p. for random split graphs.

We use the construction of \emph{random split graphs} from~\cite{BGM2017}. Given two positive integers $k$ and $\ell$, define $G_{k, \ell}$ as the following random graph. The vertex set of $G_{k, \ell}$ is a disjoint union \(V_1\cup V_2\), where \(V_1\) and \(V_2\) have size $k$ and $\ell$ respectively. The induced subgraph on \(V_1\) is complete and the induced subgraph on \(V_2\) is edgeless. Every pair \(\{v_1,v_2\}\) with \(v_1\in V_1\) and \(v_2\in V_2\), is an edge with probability \(1/2\). These $k\ell$ choices are made independently. The main result of this section is the following:

\begin{theorem}~\label{thm:iG_random_split_graphs}
    If $G_{k, k}$ is a random split graph, then with high probability
    \begin{equation*}
        i(G_{k, k}) = \delta(G_{k, k}).
    \end{equation*}
\end{theorem}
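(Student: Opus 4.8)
The plan is to prove the two inequalities $i(G_{k,k})\le\delta(G_{k,k})$ and $i(G_{k,k})\ge\delta(G_{k,k})$ separately, the first deterministically and the second with high probability. Write $n=2k$, let $V_1$ be the clique side and $V_2$ the independent side, and put $\delta=\delta(G_{k,k})$. The upper bound is immediate: taking $S=\{v\}$ for a vertex $v$ of minimum degree gives $|\partial S|/|S|=\deg(v)=\delta$, and $|S|=1\le k=n/2$, so $i(G_{k,k})\le\delta$. Everything therefore reduces to showing that w.h.p.\ every set $S$ with $2\le|S|\le k$ satisfies $|\partial S|\ge\delta|S|$.

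First I would record the structural formula for the boundary of a set $S=A\cup B$ with $A=S\cap V_1$, $B=S\cap V_2$, $a=|A|$, $b=|B|$, $s=a+b$. Since $V_1$ induces a clique and $V_2$ an independent set, $|\partial S|=a(k-a)+e(A,V_2\setminus B)+e(V_1\setminus A,B)$, where $e(\cdot,\cdot)$ counts the (random) edges present between the two sides. The key deterministic observation is that, bounding $e(A,V_2\setminus B)\ge0$ and $e(V_1\setminus A,B)=\vol(B)-e(A,B)\ge b\delta-ab$, one obtains $|\partial S|\ge a(k-s)+b\delta$. Hence, whenever $s\le k-\delta$, we have $k-s\ge\delta$ and so $|\partial S|\ge a\delta+b\delta=\delta s$. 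This disposes of every set of size at most $k-\delta$ with no probabilistic input, and is the crucial point, since it covers exactly the range where a union bound is too weak.

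It remains to treat the large sets with $k-\delta<s\le k$. Here I would first use a binomial anti-concentration estimate to show that w.h.p.\ $\delta\le\delta^-:=\tfrac{k}{2}-\tfrac12\sqrt{k\log k}$: the $V_2$-degrees are i.i.d.\ $\mathrm{Bin}(k,\tfrac12)$, each below $\delta^-$ with probability $\gtrsim k^{-1/2}$, so the minimum of $k$ of them falls below $\delta^-$ w.h.p.\ (the $V_1$-degrees exceed $k-1>\delta^-$ and are irrelevant). On this event $k-\delta>k/2$, so it suffices to prove that w.h.p.\ $|\partial S|\ge\delta^- s$ for all $S$ with $s>k/2$, as then $|\partial S|\ge\delta^- s\ge\delta s$. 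For a fixed such $S$, write $|\partial S|=a(k-a)+X$ with $X$ a sum of $N=ks-2ab\le ks$ independent $\mathrm{Bernoulli}(\tfrac12)$ variables; a direct computation gives $\Exp{X}-\bigl(\delta^- s-a(k-a)\bigr)=(\tfrac k2-\delta^-)s+a(k-s)\ge\tfrac12\sqrt{k\log k}\,s$, so Hoeffding's inequality with $t=\tfrac12\sqrt{k\log k}\,s$ yields $\mathbb{P}[|\partial S|<\delta^- s]\le\exp(-2t^2/N)\le k^{-s/2}$. Summing over the at most $2^{2k}$ sets with $s>k/2$ gives total failure probability at most $2^{2k}k^{-k/4}\to0$, and combining the two ranges shows $|\partial S|\ge\delta|S|$ for all admissible $S$, hence $i(G_{k,k})\ge\delta$.

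The main obstacle is the tension in this union bound: at the threshold $\delta^-$ a fixed set of size $s$ violates $|\partial S|\ge\delta^- s$ with probability at most $k^{-s/2}$, but there are about $k^{s}$ sets of size $s$, so the union bound diverges for small $s$, and no admissible choice of threshold makes it succeed there while still keeping $\delta\le\delta^-$ w.h.p. The resolution is precisely the deterministic inequality $|\partial S|\ge a(k-s)+b\delta$, which exploits the clique/independent-set structure to cover all sets up to size $k-\delta>k/2$ for free, leaving only genuinely large sets—where the concentration gain of order $\exp(-\Theta(k\log k))$ beats even the crude $2^{2k}$ bound on the number of sets—to the probabilistic argument. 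Together with the trivial upper bound, this gives $i(G_{k,k})=\delta(G_{k,k})$ w.h.p.
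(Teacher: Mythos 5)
Your proof is correct and follows essentially the same strategy as the paper: the trivial upper bound from a minimum-degree singleton, a deterministic clique/independent-set estimate for small cuts (your bound $|\partial S|\ge a(k-s)+b\delta$ is algebraically identical to the paper's Lemma on small sets, just applied over the slightly wider range $s\le k-\delta$), a w.h.p.\ bound $\delta\le k/2-\tfrac12\sqrt{k\log k}$ on the minimum degree, and a Chernoff/Hoeffding-plus-union-bound argument for sets of size larger than $k/2$. The only differences are cosmetic (Hoeffding with threshold $\delta^-$ versus the paper's one-sided Chernoff bound with threshold $k/2-2\sqrt{k}$, and a direct anti-concentration argument for the minimum degree in place of a citation), so no further comparison is needed.
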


The proof idea is as follows. We consider two cases for the set $S$ that minimizes $i(G_{k, k})$. If $|S|\leq |V|/4$, we show that every split graph $G$ satisfies $\abs{\partial S}/\abs{S}\geq \delta(G)$ as long as the minimum degree is bounded (Lemma~\ref{lem:split_small_s}), a condition fulfilled by $G_{k, k}$ w.h.p. (Lemma~\ref{lem:split_min_deg}). If $|S|\geq |V|/4$, we use Chernoff’s inequality (Lemma~\ref{lem:Chernoff}) together with a union bound to show that w.h.p. $\abs{\partial S}/\abs{S}$ is greater than $\delta(G_{k, k})$ (Lemma~\ref{lem:split_large_s}).

\begin{lemma}\label{lem:split_small_s}
    Let $G = (V_1 \cup V_2, E)$ be a split graph with $|V_1| = |V_2| = k$ and with $\delta(G) \leq k/2$. For every $S \subseteq V_1 \cup V_2$ with $|S|\leq k/2$,
    \begin{equation*}
        \frac{|\partial S|}{|S|} \geq \delta(G).
    \end{equation*}
\end{lemma}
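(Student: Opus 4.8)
The plan is to decompose $S$ according to the split partition and to bound the boundary edges by type. Write $S_1 = S \cap V_1$ and $S_2 = S \cap V_2$, and set $a = |S_1|$ and $b = |S_2|$, so that $a + b = |S| \le k/2$. For disjoint vertex sets $X, Y$ let $e(X,Y)$ denote the number of edges between them. Since $V_1$ induces a clique and $V_2$ is independent, the edges leaving $S$ come from exactly three sources: the clique edges from $S_1$ to $V_1\setminus S_1$, of which there are $a(k-a)$; the cross edges from $S_1$ to $V_2\setminus S_2$; and the cross edges from $V_1\setminus S_1$ to $S_2$. This gives
\[|\partial S| = a(k-a) + e(S_1, V_2\setminus S_2) + e(V_1\setminus S_1, S_2).\]

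First I would exploit the minimum-degree hypothesis on the independent side. Every $w\in V_2$ has all of its neighbours in $V_1$, so $\deg(w) = e(\{w\}, V_1) \ge \delta(G)$. Summing over $w\in S_2$ and splitting the incident edges into those landing in $S_1$ and those landing in $V_1\setminus S_1$ yields
\[e(S_1, S_2) + e(V_1\setminus S_1, S_2) = \sum_{w\in S_2}\deg(w) \ge \delta(G)\, b,\]
and hence $e(V_1\setminus S_1, S_2) \ge \delta(G)\,b - e(S_1,S_2)$. Substituting this into the boundary formula and discarding the non-negative term $e(S_1, V_2\setminus S_2)$ leaves $|\partial S| \ge a(k-a) + \delta(G)\, b - e(S_1,S_2)$.

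The final step is to absorb the internal cross edges $e(S_1,S_2)$ into the clique contribution. Using only the crude bound $e(S_1,S_2) \le ab$, I obtain $|\partial S| \ge a(k-a-b) + \delta(G)\, b$. Now the size constraint does the remaining work: from $a+b \le k/2$ it follows that $k - a - b \ge k/2 \ge \delta(G)$, so $a(k-a-b) \ge \delta(G)\, a$, and therefore $|\partial S| \ge \delta(G)(a+b) = \delta(G)\,|S|$, which is the claim (the case $a=0$ is subsumed automatically).

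I expect the main obstacle to be organising the edge count cleanly rather than any single hard estimate. The tempting but unnecessary move is to try to control $e(S_1, V_2\setminus S_2)$ precisely; the key realisation is that one may simply discard it and instead pay for the internal cross edges $e(S_1,S_2)$ out of the surplus clique boundary $a(k-a)$, whose size is guaranteed by the two hypotheses $a+b\le k/2$ and $\delta(G)\le k/2$ acting together. Note that no probabilistic input enters at this stage: the inequality is purely structural and holds for \emph{every} split graph meeting the degree bound.
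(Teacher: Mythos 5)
Your proof is correct and is essentially the paper's argument: the paper counts, per vertex, at least $k-a$ boundary edges into $V_1\setminus S_1$ from each vertex of $S_1$ and at least $\delta(G)-a$ from each vertex of $S_2$, yielding the bound $|\partial S|\ge a(k-a)+(\delta(G)-a)(|S|-a)$, which is algebraically identical to your $a(k-a-b)+\delta(G)\,b$. The final step, using $|S|\le k/2$ and $\delta(G)\le k/2$ to conclude, is the same in both.
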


\begin{proof}
    Let $s := |S|$ and $a := |S\cap V_1|$. Every vertex in $S\cap V_1$ has at least $k-a$ edges from $S$ to $V_1\setminus S$ and every vertex in $S\cap V_2$ has at least $\delta(G)-a$ edges from $S$ to $V_1\setminus S$. So
    \begin{align*}
         \frac{|\partial S|}{|S|} &\geq \frac{(k-a)a + (\delta(G) - a)(s-a)}{s}\\
         &= \delta(G) + \frac{a}{s}(k - s - \delta(G))\\
         &\geq \delta(G). \qedhere
    \end{align*}
\end{proof}

\begin{lemma}\label{lem:split_min_deg}
    With high probability
    \begin{equation*}
        \delta(G_{k, k}) \leq \frac{k}{2} - \frac{1}{2}\sqrt{k\log k}.
    \end{equation*}
\end{lemma}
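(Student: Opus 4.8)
The plan is to reduce the statement to a lower-tail estimate for a single binomial random variable and then amplify it over the $k$ independent vertices of $V_2$. First I would observe that only vertices of $V_2$ can have small degree: each vertex of $V_1$ is adjacent to all $k-1$ other vertices of $V_1$, hence has degree at least $k-1$, which deterministically exceeds $m:=\lfloor \tfrac k2-\tfrac12\sqrt{k\log k}\rfloor$ for large $k$. Thus it suffices to exhibit one vertex of $V_2$ of degree at most $m$. The degrees $X_1,\dots,X_k$ of the vertices of $V_2$ are i.i.d.\ $\mathrm{Bin}(k,1/2)$ random variables, since each is joined independently to each vertex of $V_1$ with probability $1/2$. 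As the event $\delta(G_{k,k})>m$ forces every $X_i>m$, independence gives
\[
\Prob{\delta(G_{k,k})>m}\le \Prob{X_1>m}^{\,k}=\bigl(1-\Prob{X_1\le m}\bigr)^{k}\le \exp\!\bigl(-k\,\Prob{X_1\le m}\bigr).
\]
It therefore remains to prove that $k\,\Prob{X_1\le m}\to\infty$.

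The crux is a sufficiently sharp lower bound on the binomial lower tail $\Prob{X_1\le m}$. The deviation $\tfrac12\sqrt{k\log k}$ equals $\sqrt{\log k}$ standard deviations (the standard deviation of $X_1$ is $\tfrac12\sqrt k$), and since $\sqrt{\log k}=o(k^{1/6})$ we sit in the moderate-deviation regime, where the Gaussian approximation to the tail holds up to a $1+o(1)$ factor. Concretely, I would show $\Prob{X_1\le m}\ge c\,k^{-1/2}(\log k)^{-1/2}$ for some $c>0$. Granting this, $k\,\Prob{X_1\le m}\ge c\,k^{1/2}(\log k)^{-1/2}\to\infty$, and the displayed inequality then yields $\Prob{\delta(G_{k,k})>m}\to 0$, i.e.\ w.h.p.\ $\delta(G_{k,k})\le m\le \tfrac k2-\tfrac12\sqrt{k\log k}$, as claimed.

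To establish the tail bound I would estimate the central binomial terms directly. By Stirling's formula, for $j=\tfrac k2-s$ with $|s|=o(k^{2/3})$ one has $\binom kj 2^{-k}=(1+o(1))\sqrt{2/(\pi k)}\,\exp(-2s^2/k)$ uniformly; at $j=m$ this is of order $k^{-1}$, since there $2s^2/k=\tfrac12\log k+O(1)$. A single term is not enough — it only yields $k\,\Prob{X_1\le m}=\Theta(1)$ — so I would sum over a window of $\Theta(\sqrt{k/\log k})$ consecutive values at or below $m$. Over such a window the exponent $2s^2/k$ changes by only $O(1)$, because its derivative in $j$ at $j=m$ has magnitude $2\sqrt{(\log k)/k}$; hence every term in the window has size $\gtrsim k^{-1}$, and summing gives $\Prob{X_1\le m}\gtrsim k^{-1}\cdot\sqrt{k/\log k}=k^{-1/2}(\log k)^{-1/2}$, exactly as needed. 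Equivalently, one may invoke the local central limit theorem or a moderate-deviation lower bound of Cram\'er type in place of this Stirling computation.

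The main obstacle is precisely this lower bound on the lower tail. Upper bounds come for free from Chernoff's inequality (Lemma~\ref{lem:Chernoff}), but here we need a matching lower bound with essentially the correct Gaussian exponent. A reverse-Chernoff estimate of the form $\Prob{X_1\le m}\ge c\,e^{-Cs^2/k}=c\,k^{-C/4}$ is only useful when $C<4$: the sharp constant $C=2$ leaves ample room, but any lossy bound with $C\ge 4$ fails to drive $k\,\Prob{X_1\le m}$ to infinity. Likewise the Berry--Esseen additive error $O(k^{-1/2})$ is itself of larger order than the target probability $\Theta(k^{-1/2}(\log k)^{-1/2})$, so it cannot certify the bound. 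Getting the constant in the exponent right — equivalently, working inside the moderate-deviation window where only a multiplicative $1+o(1)$ error is incurred — is what makes this step delicate.
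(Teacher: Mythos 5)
Your proof is correct, but it takes a different route from the paper. The paper's proof is a one-line citation: the degrees of the vertices in $V_2$ are i.i.d.\ $\mathrm{Bin}(k,1/2)$, and \cite[Theorem~3.3]{Bollobas2001} gives $\min_i \deg(v_i) = \frac{k}{2} - (1+o(1))\sqrt{\tfrac{k\log k}{2}}$ w.h.p., which is stronger than the claimed bound since $\tfrac12 < \tfrac1{\sqrt2}$. You instead prove the needed direction from scratch: reduce to the $V_2$ vertices, use independence to get $\Prob{\delta > m} \le \exp(-k\,\Prob{X_1\le m})$, and then establish the moderate-deviation lower bound $\Prob{X_1\le m}\gtrsim k^{-1/2}(\log k)^{-1/2}$ by summing $\Theta(\sqrt{k/\log k})$ local terms of size $\Theta(k^{-1})$ near $m$. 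Your computations check out: the deviation is $\sqrt{\log k}$ standard deviations, well inside the $|s|=o(k^{2/3})$ window where the Gaussian local approximation is valid, the exponent $2s^2/k=\tfrac12\log k+O(1)$ at $j=m$ gives terms of order $k^{-1}$, and the increment $4sw/k=2$ over a window of width $w=\sqrt{k/\log k}$ confirms all terms in the window are comparable. Your diagnosis of the delicate point is also accurate — a lossy reverse-Chernoff constant or a Berry--Esseen additive error would not suffice. What the paper's approach buys is brevity and a sharper constant for free; what yours buys is self-containedness (no reliance on the precise statement of the cited theorem) and an explicit explanation of why the constant $\tfrac12$ works, namely that the method succeeds for any constant $c<\tfrac1{\sqrt2}$. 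Either is acceptable; if you adopt your version you should state the local limit theorem you invoke (or include the Stirling computation) rather than leaving it at the level of a sketch.
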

\begin{proof}
    The degree of each vertex $v_i \in V_2$ is a binomial random variable $\text{deg}(v_i) \sim \text{Bin}(k,1/2)$. It follows from~\cite[Theorem 3.3]{Bollobas2001} that, w.h.p.,
    \begin{equation*}
        \min_{i} \text{deg}(v_i) = \frac{k}{2} - (1+o(1)) \sqrt{\frac{k\log k}{2}}.
    \end{equation*}
    Since $\delta(G_{k, k}) \leq \min_{i} \text{deg}(v_i)$, the result follows.
\end{proof}

We proceed by using the typical Chernoff-type tail bound on the binomial distribution (see for example \cite[Appendix A]{AS2016}).

\begin{lemma}[{\cite{AS2016}}]\label{lem:Chernoff}
Let $n \in \mathbb{N}$, let $p \in [0,1]$, let $X \sim \text{Bin}(k,p)$ and let $\mu = kp$. For every $t$ with $0\le t \leq \mu/2$, 
\[
\mathbb{P}\left[X < \mu - t\right] \leq \exp\left(-\frac{t^2}{2\mu} \right).
\]
\end{lemma}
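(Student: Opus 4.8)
The plan is to establish this lower-tail estimate by the standard exponential-moment (Chernoff) method; since the statement is quoted from \cite{AS2016}, the paper itself only needs the citation, but here is the route I would take. For any $\lambda > 0$, rewriting the event and applying Markov's inequality to the nonnegative random variable $e^{-\lambda X}$ gives
\[
\mathbb{P}\left[X < \mu - t\right] = \mathbb{P}\left[e^{-\lambda X} > e^{-\lambda(\mu - t)}\right] \le e^{\lambda(\mu - t)}\,\mathbb{E}\left[e^{-\lambda X}\right].
\]
First I would compute the moment generating function of the binomial. Since $X$ is a sum of $k$ independent Bernoulli$(p)$ variables, $\mathbb{E}\left[e^{-\lambda X}\right] = \left(pe^{-\lambda} + 1 - p\right)^{k} = \left(1 - p(1 - e^{-\lambda})\right)^{k}$, and bounding each factor termwise via $1 + x \le e^{x}$ yields $\mathbb{E}\left[e^{-\lambda X}\right] \le \exp\!\big(-\mu(1 - e^{-\lambda})\big)$, where $\mu = kp$. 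Combining the two displays gives
\[
\mathbb{P}\left[X < \mu - t\right] \le \exp\!\big(\lambda(\mu - t) - \mu(1 - e^{-\lambda})\big).
\]

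Next I would optimize the exponent $g(\lambda) = \lambda(\mu - t) - \mu(1 - e^{-\lambda})$ over $\lambda > 0$. Differentiating gives the stationary point $e^{-\lambda} = 1 - t/\mu$, i.e.\ $\lambda = -\log(1 - t/\mu)$, which is positive and well defined because $0 \le t \le \mu/2 < \mu$. Writing $\delta = t/\mu \in [0,\tfrac12]$ and substituting this value, the optimized exponent collapses to $-\mu\big[(1-\delta)\log(1-\delta) + \delta\big]$.

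The crux is then the elementary inequality $(1-\delta)\log(1-\delta) + \delta \ge \delta^{2}/2$, and this is the step I expect to be the main obstacle, as everything preceding it is mechanical. I would prove it via the Taylor series $\log(1-\delta) = -\sum_{j \ge 1}\delta^{j}/j$: multiplying by $(1-\delta)$ cancels the linear term and leaves $(1-\delta)\log(1-\delta) + \delta = \sum_{j \ge 2}\delta^{j}/(j(j-1)) = \delta^{2}/2 + \delta^{3}/6 + \cdots$, a power series with nonnegative coefficients, hence at least $\delta^{2}/2$. Feeding this back in gives $g(\lambda) \le -\mu\delta^{2}/2 = -t^{2}/(2\mu)$, which is exactly the claimed bound. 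Note that the hypothesis $t \le \mu/2$ is not actually required for the series inequality, which holds on all of $[0,1)$; it merely keeps $\lambda$ and the estimate in the regime in which this clean form is conventionally stated.
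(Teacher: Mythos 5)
Your proposal is correct: the exponential-moment bound, the optimization $e^{-\lambda}=1-t/\mu$, and the series identity $(1-\delta)\log(1-\delta)+\delta=\sum_{j\ge 2}\delta^j/(j(j-1))\ge\delta^2/2$ all check out, and your remark that the hypothesis $t\le\mu/2$ is not actually needed for this lower-tail form is also accurate. The paper gives no proof of this lemma at all --- it is quoted directly from the cited reference \cite{AS2016} --- and your derivation is precisely the standard Chernoff argument found there, so there is nothing to compare beyond noting that you have supplied the proof the paper delegates to its source.
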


\begin{lemma}\label{lem:split_large_s}
    With high probability, for every $S\subseteq V(G_{k,k})$ with $k/2 \leq |S| \leq k$,
    \begin{equation*}
        \frac{|\partial S|}{|S|} \geq \frac{k}{2} - 2\sqrt{k}.
    \end{equation*}
\end{lemma}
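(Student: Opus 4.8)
The plan is to fix a set $S$ with $k/2 \le |S| \le k$, isolate the random part of its edge boundary, and then take a union bound over all such $S$. Write $a = |S \cap V_1|$ and $b = |S \cap V_2|$, so that $|S| = a + b$. Since $V_1$ induces a clique and $V_2$ is edgeless, the boundary $\partial S$ consists of the $a(k-a)$ deterministic clique edges between $S \cap V_1$ and $V_1 \setminus S$, together with the bipartite edges joining $S$ to $S^c$ across the cut. The number of \emph{potential} bipartite crossing edges is $N := a(k-b) + (k-a)b$, and each is present independently with probability $1/2$; hence the number of actual crossing edges is $X \sim \mathrm{Bin}(N, 1/2)$ and $|\partial S| = a(k-a) + X$.

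First I would compute the expected boundary. A direct expansion gives $\mathbb{E}[|\partial S|] = a(k-a) + N/2$, and the key algebraic identity $\mathbb{E}[|\partial S|] - \tfrac{k}{2}|S| = a(k - |S|) \ge 0$ (using $|S| \le k$) shows that in expectation the ratio $|\partial S|/|S|$ is already at least $k/2$. Next I would record the two-sided estimate $\tfrac{k}{2}|S| \le N \le k|S|$: the upper bound is immediate from $a(k-b) \le ak$ and $(k-a)b \le kb$, and the lower bound follows because $N = k|S| - 2ab$ is minimized, for fixed $|S| \le k$, when $a = b = |S|/2$, giving $N \ge k|S| - |S|^2/2 \ge \tfrac{k}{2}|S|$.

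The concentration step then applies Lemma~\ref{lem:Chernoff} to $X$ with $\mu = N/2$ and deviation $t = 2\sqrt{k}\,|S|$. Using $a(k-a) + N/2 = \mathbb{E}[|\partial S|] \ge \tfrac{k}{2}|S|$, it suffices to show $X \ge \mu - t$, since this forces $|\partial S| \ge \mathbb{E}[|\partial S|] - t \ge \tfrac{k}{2}|S| - 2\sqrt{k}\,|S| = (\tfrac{k}{2} - 2\sqrt{k})|S|$. The Chernoff hypothesis $t \le \mu/2$ holds for all large $k$, because $N \ge \tfrac{k}{2}|S|$ forces $\mu/2 \ge \tfrac{k}{8}|S| \ge 2\sqrt{k}\,|S| = t$ once $k \ge 256$. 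Using $\mu = N/2 \le \tfrac{k}{2}|S|$ to bound the exponent from below, I get $\mathbb{P}[X < \mu - t] \le \exp(-t^2/(2\mu)) \le \exp(-4|S|) \le \exp(-2k)$, the last step since $|S| \ge k/2$.

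Finally I would take a union bound. There are at most $2^{2k}$ subsets of $V_1 \cup V_2$, so the probability that some $S$ with $k/2 \le |S| \le k$ violates the claimed inequality is at most $2^{2k}\exp(-2k) = \exp(-2k(1 - \ln 2))$, which tends to $0$ because $\ln 2 < 1$. I expect the main obstacle to be exactly this balancing act: the per-set failure probability must decay faster than $2^{-2k}$ to survive the union bound over all subsets, and the factor $2\sqrt{k}$ in the statement is tuned precisely so that $t^2/(2\mu) \ge 2k > 2k\ln 2$, leaving a positive margin. Getting the constant in the exponent right --- in particular using the lower bound $N \ge \tfrac{k}{2}|S|$ to guarantee the Chernoff regime while using the upper bound $N \le k|S|$ to keep the tail exponent large --- is the delicate part of the argument.
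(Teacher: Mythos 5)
Your proof is correct and follows essentially the same route as the paper's: the same decomposition $|\partial S| = a(k-a) + X$ with $X\sim\mathrm{Bin}(N,1/2)$, the same identity $\mathbb{E}[|\partial S|]-\tfrac{k}{2}|S| = a(k-|S|)\ge 0$, the same Chernoff bound with $t=2\sqrt{k}\,|S|$ yielding $e^{-4|S|}$, and the same union bound against $2^{2k}$. Your explicit verification of the hypothesis $t\le\mu/2$ via the lower bound $N\ge \tfrac{k}{2}|S|$ is a welcome detail the paper leaves implicit.
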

\begin{proof}
    Fix a set $S$, with $S_1 = S \cap V_1$, $S_2 = S \cap V_2$, $\abs{S_1} = a, \abs{S_2} = b$. Define a random variable $X$ for the number of non-deterministic edges in $\partial S$, that is, $X\sim \text{Bin}(m,\frac{1}{2})$, where $m:= a(k-b)+b(k-a)$. We have
    \begin{equation*}
        \abs{\partial S} = a(k-a) + X.
    \end{equation*}
    If $E_S$ denotes the event that $\abs{\partial S} < \abs{S}k/2 - 2|S|\sqrt{k}$, then
    \begin{equation*}
        \Prob{E_S} = \Prob{X < \frac{\abs{S}k}{2} -a(k-a)- 2|S|\sqrt{k}}.
    \end{equation*}
    Let $\mu:=\Exp{X} = m/2$. Since $a+b \leq k$,
    \begin{equation*}
        \mu \geq \mu - a(k-a-b) = \frac{\abs{S}k}{2} -a(k-a).
    \end{equation*}
    Using this observation, we can apply Lemma~\ref{lem:Chernoff} to bound $\Prob{E_S}$, which gives
    \begin{equation*}
        \Prob{E_S} \leq \Prob{X < \mu - 2|S|\sqrt{k}}
        \leq \exp\left(-\frac{4|S|^2k}{2\mu}\right)
        \leq \exp\left(-4|S|\right),
    \end{equation*}    
    where the last inequality follows from $\mu \leq \abs{S}k/2$. The argument finishes by taking the union bound over all choices of $S$: 
    \begin{equation*}
        \Prob{\bigcup_{\substack{S \subseteq V(G_{k,k}),\\ k/2 \leq |S| \leq k}}E_S} \leq \sum_{\substack{S \subseteq V(G_{k,k}),\\ k/2 \leq |S| \leq k}}\Prob{E_S} \leq \sum_{s=k/2}^{k} \binom{2k}{s} e^{-4s} \leq 2^{2k}e^{-2k} \to 0.\qedhere
    \end{equation*}
\end{proof}

We are now ready to show the main result of this section.

\begin{proof}[Proof of Theorem~\ref{thm:iG_random_split_graphs}]
    Let $S \subseteq V(G_{k, k})$ with $\abs{\partial S}/\abs{S}=i(G_{k, k})$. From Lemma~\ref{lem:split_min_deg}, $\delta(G_{k, k}) \leq k/2$ w.h.p., so if $|S| \leq k/2$, we can apply Lemma~\ref{lem:split_small_s} and get that $\abs{\partial S}/\abs{S} = \delta(G_{k, k})$. Now, in the case $|S| \geq k/2$, we apply Lemma~\ref{lem:split_large_s} and w.h.p., for any choice of $S$,
    \begin{equation*}
        \frac{|\partial S|}{|S|} \geq \frac{k}{2} - 2\sqrt{k} \geq \delta(G_{k, k}), 
    \end{equation*}
    where the last inequality follows from Lemma~\ref{lem:split_min_deg}. 
\end{proof}

In \cite{AFHP2014}, the authors provide an upper bound on $i(G)$ based on the work of Grone and Merris on the majorization relation between the Laplacian eigenvalues and the degree sequence.

\begin{proposition}[{\cite[Proposition 4.6]{AFHP2014}}]
    \label{propo:Aida7b}
    \begin{equation}
    \label{eq:Aida7b}
        i(G)\leq  \min_{\frac{n}{2}\leq m < n} \sum_{i=1}^{m}(\mu_{n+1-i} - d_{i}).
    \end{equation}
\end{proposition}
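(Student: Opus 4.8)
The plan is to establish the inequality $i(G)\le\sum_{i=1}^m(\mu_{n+1-i}-d_i)$ separately for each fixed $m$ with $\tfrac n2\le m<n$; since the right-hand side of \eqref{eq:Aida7b} is the minimum over these $m$, the statement then follows at once. For a fixed $m$, write $D_m=\sum_{i=1}^m d_i$ for the sum of the $m$ largest degrees and $E_m=\sum_{i=1}^m\mu_{n+1-i}$ for the sum of the $m$ largest Laplacian eigenvalues, so the goal for each $m$ is $i(G)\le E_m-D_m$. The only set I would ever test is $S=V\setminus T$, where $T$ is a set of $m$ vertices of largest degree. Because $m\ge\tfrac n2$ we have $1\le|S|=n-m\le\tfrac n2$, so $S$ is admissible in the definition of $i(G)$, and it suffices to bound $|\partial S|/|S|$.

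The key step is to feed the right partition into quotient-matrix interlacing. I would take the partition $\mathcal P$ of $V$ consisting of the $m$ singletons $\{t\}$ with $t\in T$, together with the single remaining block $S$, giving $m+1$ parts, and form the quotient matrix $B$ of the Laplacian $L=D-A$ with respect to $\mathcal P$. A direct computation of the diagonal entries gives $B_{\{t\}\{t\}}=L_{tt}=d_t$ for each singleton and $B_{SS}=\tfrac1{|S|}\sum_{a,b\in S}L_{ab}=|\partial S|/|S|$, using that $\sum_{b\in S}L_{ab}$ counts exactly the neighbours of $a$ lying outside $S$. Hence $\tr(B)=\sum_{t\in T}d_t+|\partial S|/|S|=D_m+|\partial S|/|S|$.

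Next I would invoke the interlacing of the eigenvalues of $B$ with those of $L$ recalled in the preliminaries. Since $L$ is positive semidefinite with the all-ones vector in its kernel, the same holds for $B$: it is positive semidefinite, and the vector that is constant on the parts lies in its kernel, so its smallest eigenvalue is $0$. Writing the eigenvalues of $B$ as $\beta_1\ge\dots\ge\beta_{m+1}=0$, interlacing gives $\beta_i\le\mu_{n+1-i}$ for $1\le i\le m$, whence
\[
\tr(B)=\sum_{i=1}^{m}\beta_i\le\sum_{i=1}^m\mu_{n+1-i}=E_m.
\]
Combining this with $\tr(B)=D_m+|\partial S|/|S|$ yields $|\partial S|/|S|\le E_m-D_m$, so $i(G)\le E_m-D_m$, and minimising over $m$ finishes the proof.

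I expect the main obstacle to be essentially the choice of partition rather than any hard estimate: it is precisely the \emph{asymmetric} partition (singletons on the high-degree vertices, one block on the low-degree complement) that makes $\tr(B)$ split as $D_m+|\partial S|/|S|$ and lets interlacing convert the trace into the sum of the $m$ largest Laplacian eigenvalues. The points to verify with care are that the $(m+1)$-st eigenvalue of $B$ is exactly $0$ so that it drops out of the trace, that the interlacing inequality points in the direction $\beta_i\le\mu_{n+1-i}$ for the top $m$ indices, and that the admissibility requirement $|S|\le\tfrac n2$ is exactly what forces the range $m\ge\tfrac n2$ appearing in the statement.
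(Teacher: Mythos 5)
Your proof is correct, and it is essentially the argument behind the cited result: the paper itself states Proposition~\ref{propo:Aida7b} without proof, importing it from \cite{AFHP2014}, and that reference proves it by exactly the interlacing device you describe --- the asymmetric partition into $m$ singletons on the highest-degree vertices plus the one remaining block, giving $\tr(B)=\sum_{i=1}^m d_i+|\partial S|/|S|$, with the quotient of the positive semidefinite Laplacian having smallest eigenvalue $0$ so that the trace is bounded by the sum of the $m$ largest Laplacian eigenvalues. All the delicate points you flag (the direction $\beta_i\le\mu_{n+1-i}$, the vanishing of $\beta_{m+1}$, and the admissibility $|S|=n-m\le\frac n2$ forcing $m\ge\frac n2$) check out against the interlacing statement recalled in the preliminaries.
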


The bound~\eqref{eq:Aida7b} beats the upper bound~\eqref{eq:Mohar} for the complete split graph $S_{p, q}$ obtained by adding all edges between $K_p$ and $E_q$~\cite[Example 4.7]{AFHP2014}. In this direction, Theorem~\ref{thm:iG_random_split_graphs} gives a family of split graphs for which equality occurs in~\eqref{eq:Aida7b}.

\begin{corollary}
   With high probability
    \begin{equation*}
        i\left(G_{k, k}\right) =  \min_{k\leq m < 2k} \sum_{i=1}^{m}(\mu_{n+1-i} - d_{i}).
    \end{equation*}
\end{corollary}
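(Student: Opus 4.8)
The plan is to derive this equality w.h.p.\ by pairing the deterministic upper bound of Proposition~\ref{propo:Aida7b} with the w.h.p.\ identity $i(G_{k,k})=\delta(G_{k,k})$ from Theorem~\ref{thm:iG_random_split_graphs}, and to close the remaining gap by evaluating the right-hand side of \eqref{eq:Aida7b} at a single well-chosen value of $m$. First I would record that Proposition~\ref{propo:Aida7b}, applied to $G=G_{k,k}$ with $n=2k$ (so that $\tfrac n2=k$), already yields
\[
i(G_{k,k}) \leq \min_{k\leq m<2k} \sum_{i=1}^{m}(\mu_{n+1-i}-d_i),
\]
and that this holds for every split graph with no probabilistic input. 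It therefore only remains to prove the reverse inequality, for which it suffices to exhibit one admissible $m$ whose summand is at most $\delta(G_{k,k})$.

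The key observation is that the extreme choice $m=n-1=2k-1$, which lies in the range $k\leq m<2k$, produces exactly $\delta(G)$ for \emph{every} graph. Indeed, since $\mu_1=0$ and the Laplacian trace equals the sum of degrees, one has $\sum_{i=1}^{n-1}\mu_{n+1-i}=\sum_{j=2}^{n}\mu_j=\tr(L)=\sum_{i=1}^{n}d_i$, while $\sum_{i=1}^{n-1}d_i=\sum_{i=1}^{n}d_i-d_n$. Subtracting gives
\[
\sum_{i=1}^{n-1}(\mu_{n+1-i}-d_i)=d_n=\delta(G),
\]
so the minimum in \eqref{eq:Aida7b} is at most $\delta(G_{k,k})$.

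Combining the two directions finishes the argument: by Theorem~\ref{thm:iG_random_split_graphs} we have $\delta(G_{k,k})=i(G_{k,k})$ w.h.p., and the chain
\[
\delta(G_{k,k}) = i(G_{k,k}) \leq \min_{k\leq m<2k}\sum_{i=1}^{m}(\mu_{n+1-i}-d_i) \leq \delta(G_{k,k})
\]
forces equality throughout. I do not expect a genuine obstacle here beyond the trace identity; the only points requiring care are verifying that $m=2k-1$ is truly admissible (it is, since the constraint is $m<n$) and that the decreasing degree ordering $d_1\geq\dots\geq d_n$ and increasing eigenvalue ordering $\mu_1\leq\dots\leq\mu_n$ line up so that the telescoping collapses precisely to the minimum degree $d_n=\delta(G)$.
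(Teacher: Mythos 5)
Your proposal is correct and follows essentially the same route as the paper: combine the deterministic upper bound of Proposition~\ref{propo:Aida7b} with the w.h.p.\ identity $i(G_{k,k})=\delta(G_{k,k})$ from Theorem~\ref{thm:iG_random_split_graphs}, and observe via the Laplacian trace identity that the summand at $m=n-1$ collapses to $d_n=\delta(G)$ for every graph. The sandwich $\delta = i \leq \min \leq \delta$ is exactly the paper's argument.
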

\begin{proof}
It follows from Theorem~\ref{thm:iG_random_split_graphs} that w.h.p. the minimum degree of $G_{k, k}$ is equal to the isoperimetric number. Moreover, the following holds for every graph $G$ on $n$ vertices:
\begin{align*}
    \min_{\frac{n}{2}\leq m < n} \sum_{i=1}^{m}(\mu_{n+1-i} - d_{i})& \leq \sum_{i=1}^{n-1}(\mu_{n+1-i} - d_{i})\\
    &= (2|E(G)|-\mu_1)-(2|E(G)|-d_n) \\&= \delta(G). \qedhere
\end{align*}
\end{proof}

\subsection*{Acknowledgments}

Aida Abiad, Nils van de Berg and Harper Reijnders are supported by NWO (Dutch Research Council) through the grant VI.Vidi.213.085. Emanuel Juliano is supported by FAPEMIG and CNPq grants. Robin Simoens is supported by the Research Foundation Flanders (FWO) through the grant 11PG724N. Thijs van Veluw is supported by the Special Research Fund of Ghent University through the grant BOF/24J/2023/047, and by the Fund Professor Frans Wuytack. This research was partially supported by the FWO Scientific Research Communities: Graphs, Association schemes and Geometries: structures, algorithms and
computation (W003324N)  and Finite Geometry, Coding Theory and
Cryptography (W0.032.24N).
The authors thank Antonina Khramova, Leo Storme and Vladislav Taranchuk for their initial help with the research for this paper.


\newpage
\appendix
\renewcommand{\thetable}{A\arabic{table}}
\setcounter{table}{0}

\section{Appendix}\label{sec:app_sims}

\begin{table}[H]
    \centering
    \begin{tabular}{l|c|ccc|c|c|c}
    Graph & DRG & \multicolumn{4}{c|}{Theorem \ref{theorem:lowerboundi(G^2)closed}} & Theorem \ref{theorem:upperboundi(G^2)closed} & $i(G^2)$ \\
     &  & i. &ii. &  iii. & Bound &  &  \\
    \hline
    Bidiakis cube &  & x &  &  & $1.75$ & $6.0$ & $3.0$ \\
    Blanusa First Snark Graph &  & x &  &  & $1.98$ & $6.0$ & $2.0$ \\
    Blanusa Second Snark Graph &  & x &  &  & $1.86$ & $6.12$ & $2.0$ \\
    Brinkmann graph &  & x &  &  & $7.0$ & $10.31$ & $8.0$ \\
    \textbf{Coxeter Graph} & x & x &  &  & $\textbf{3.0}$ & $6.0$ & $3.0$ \\
    \textbf{Desargues Graph} & x & x & x & x & $\textbf{3.0}$ & $6.0$ & $3.0$ \\
    Dodecahedron & x & x &  &  & $2.38$ & $6.0$ & $3.0$ \\
    Double star snark &  & x &  &  & $1.44$ & $6.12$ & $2.0$ \\
    Durer graph &  & x &  & x & $2.0$ & $6.0$ & $3.0$ \\
    Dyck graph &  & x &  &  & $2.38$ & $6.0$ & $2.0$ \\
    F26A Graph &  & x &  &  & $2.81$ & $6.07$ & $3.0$ \\
    Flower Snark &  & x &  &  & $2.83$ & $6.12$ & $3.0$ \\
    Folkman Graph &  & x &  &  & $2.03$ & $10.0$ & $4.0$ \\
    Franklin graph &  & x &  &  & $2.13$ & $6.0$ & $3.0$ \\
    Frucht graph &  & x &  &  & $1.14$ & $6.12$ & $3.0$ \\
    \textbf{Heawood graph} & x &  & x & x & $\textbf{3.0}$ & $5.71$ & $3.0$ \\
    \textbf{Hexahedron} & x & x & x & x & $\textbf{3.0}$ & $4.0$ & $3.0$ \\
    Hoffman Graph &  & x &  &  & $3.0$ & $10.0$ & $4.0$ \\
    Holt graph &  & x &  &  & $6.31$ & $10.13$ & $7.0$ \\
    \textbf{Icosahedron} & x & x & x &  & $\textbf{5.0}$ & $6.0$ & $5.0$ \\
    Klein 7-regular Graph & x & x & x &  & $10.5$ & $12.0$ & $11.0$ \\
    Markstroem Graph &  & x &  &  & $0.96$ & $6.10$ & $2.0$ \\
    \textbf{McGee graph} &  & x &  &  & $\textbf{3.0}$ & $6.12$ & $3.0$ \\
    \textbf{Moebius-Kantor Graph} &  &  & x & x & $\textbf{3.0}$ & $6.0$ & $3.0$ \\
    \textbf{Nauru Graph} &  & x & x & x & $\textbf{3.0}$ & $6.0$ & $3.0$ \\
    \textbf{Pappus Graph} & x &  & x & x & $\textbf{3.0}$ & $6.0$ & $3.0$ \\
    Robertson Graph &  & x & x &  & $7.5$ & $10.0$ & $8.0$ \\
    \textbf{Sylvester Graph} & x & x & x &  & $\textbf{12.0}$ & $15.0$ & $12.0$ \\
    Tietze Graph &  &  &  & x & $2.04$ & $6.0$ & $4.0$ \\
    Tricorn Graph &  &  & x & x & $2.5$ & $6.11$ & $3.0$ \\
    \textbf{Tutte-Coxeter graph} & x & x & x & x & $\textbf{3.0}$ & $6.0$ & $3.0$ \\
    \textbf{Twinplex Graph} &  & x & x &  & $\textbf{4.0}$ & $6.0$ & $4.0$ \\
    Wells graph & x & x &  &  & $11.38$ & $14.0$ & $12.0$ \\
    \end{tabular}
    \caption{Comparison of the bounds from Theorem \ref{theorem:lowerboundi(G^2)closed} and Theorem \ref{theorem:upperboundi(G^2)closed} for Sage named graphs of diameter at least 3. Bounds in bold are tight.}
    \label{tab:t2_sim}
\end{table}

\begin{table}[H]
    \centering
       \begin{tabular}{l|c|cc|c}
    Graph & DRG & LP \eqref{eq:iso_LP_lower} & LP \eqref{eq:iso_LP_upper} & $i(G^3)$ \\
    \hline
Balaban 10-cage &  & time & time & time \\
Blanusa First Snark Graph &  & $2.92$ & $13.0$ & $6.33$ \\
Blanusa Second Snark Graph &  & $2.77$ & $13.02$ & $6.44$ \\
Bucky Ball &  & time & time & time \\
Conway-Smith graph for 3S7 & x & time & time & time \\
\textbf{Coxeter Graph} & x & \textbf{10.0} & $12.71$ & $10.0$ \\
Desargues Graph & x & $6.5$ & $9.0$ & $6.6$ \\
Dodecahedron & x & $6.38$ & $9.0$ & $6.6$ \\
Double star snark &  & $3.44$ & $13.01$ & $5.67$ \\
Durer graph &  & $3.38$ & $9.0$ & $5.5$ \\
Dyck graph &  & $5.38$ & $15.0$ & $7.0$ \\
Ellingham-Horton 54-graph &  & time & time & time \\
Ellingham-Horton 78-graph &  & time & time & time \\
F26A Graph &  & $6.17$ & $15.0$ & $8.0$ \\
Flower Snark &  & $4.29$ & $12.93$ & $8.0$ \\
Folkman Graph &  & $6.25$ & $13.0$ & $8.8$ \\
Foster Graph & x & time & time & time \\
Foster graph for 3.Sym(6) graph & x & $21.0$ & time & time \\
Frucht graph &  & $3.04$ & $12.83$ & $5.67$ \\
Gray graph &  & time & time & time \\
Harborth Graph &  & time & time & time \\
Harries Graph &  & time & time & time \\
Harries-Wong graph &  & time & time & time \\
Hoffman Graph &  & $6.0$ & $12.0$ & $7.5$ \\
Horton Graph &  & time & time & time \\
Klein 3-regular Graph &  & time & time & time \\
Markstroem Graph &  & $1.78$ & $13.03$ & $4.33$ \\
\textbf{McGee graph} &  & \textbf{10.0} & $12.0$ & $10.0$ \\
Meredith Graph &  & time & time & time \\
Moebius-Kantor Graph &  & $6.21$ & $9.0$ & $7.0$ \\
Nauru Graph &  & $6.5$ & $15.0$ & $8.67$ \\
Pappus Graph & x & $7.5$ & $9.0$ & $7.67$ \\
Szekeres Snark Graph &  & time & time & time \\
Tutte-Coxeter graph & x & $10.0$ & $15.0$ & $10.2$ \\
\textbf{Wells graph} & x & \textbf{15.0} & $16.0$ & $15.0$ \\
    \end{tabular}
    \caption{Comparison of the bounds from Theorem \ref{theorem:lowerboundi(G^t)} (LP \eqref{eq:iso_LP_lower}) and Theorem \ref{theorem:upperboundi(G^t)} (LP \eqref{eq:iso_LP_upper}) for $t=3$ for Sage named graphs of diameter at least 4. Bounds in bold are tight.}
    \label{tab:t3_sim}
\end{table}

\begin{table}[H]
    \centering
    \begin{tabular}{l|c|cc|c}
    Graph & DRG & LP \eqref{eq:iso_LP_lower}& LP \eqref{eq:iso_LP_upper} & $i(G^4)$ \\
    \hline
    Balaban 10-cage &  & time & time & time \\
    Bucky Ball &  & time & time & time \\
    \textbf{Desargues Graph} & x & \textbf{9.0} & $10.0$ & $9.0$ \\
    \textbf{Dodecahedron} & x & \textbf{9.0} & $10.0$ & $9.0$ \\
    Dyck graph &  & $9.0$ & $17.0$ & $12.0$ \\
    Ellingham-Horton 54-graph &  & time & time & time \\
    Ellingham-Horton 78-graph &  & time & time & time \\
    F26A Graph &  & $9.0$ & $15.38$ & $12.0$ \\
    Foster Graph & x & time & time & time \\
    Gray graph &  & time & time & time \\
    Harborth Graph &  & time & time & time \\
    Harries Graph &  & time & time & time \\
    Harries-Wong graph &  & time & time & time \\
    Horton Graph &  & time & time & time \\
    Klein 3-regular Graph &  & time & time & time \\
    Markstroem Graph &  & $3.1$ & $17.24$ & $8.25$ \\
    Meredith Graph &  & time & time & time \\
    Szekeres Snark Graph &  & time & time & time \\
        \end{tabular}
    \caption{Comparison of the bounds from Theorem \ref{theorem:lowerboundi(G^t)} (LP \eqref{eq:iso_LP_lower}) and Theorem \ref{theorem:upperboundi(G^t)} (LP \eqref{eq:iso_LP_upper}) for $t=4$ for Sage named graphs of diameter at least 5. Bounds in bold are tight.}
    \label{tab:t4_sim}
\end{table}

\newpage

\begin{table}[H]
    \centering
\begin{tabular}{l|cccc}
Graph & $\mu_2/2$ & CR$^\star$ bound & Equation~\eqref{eq:closed_lb_drg} & $i(G)$ \\
\hline
\textbf{Biggs-Smith graph} & $0.22$ & $0.22$ & $0.33$ & time \\
Brouwer-Haemers & $9.0$ & $9.11$ & $5.79$ & time \\
Clebsch graph & $2.0$ & $2.0$ & $1.6$ & $2.0$ \\
Coclique graph of Hoffmann-Singleton graph & $5.0$ & $5.0$ & $3.23$ & time \\
Conway-Smith graph for 3S7 & $2.5$ & $2.54$ & $2.28$ & time \\
\textbf{Coxeter Graph} & $0.5$ &  $0.5$ & $0.56$ & $0.69$ \\
\textbf{Desargues Graph} & $0.5$ & $0.5$ & $0.6$ & $0.6$ \\
\textbf{Dodecahedron} & $0.38$ & $0.38$ & $0.6$ & $0.6$ \\
\textbf{Foster Graph} & $0.28$ & $0.28$ & $0.34$ & time \\
Foster graph for 3.Sym(6) graph & $1.5$ &  $1.53$ & $1.38$ & time \\
Gosset Graph & $9.0$ & $9.0$ & $9.0$ & time \\
Gritsenko strongly regular graph & $14.23$ & $14.45$ & $10.83$ & time \\
Hall-Janko graph & $15.0$ &  $15.0$ & $11.11$ & time \\
Heawood graph & $0.79$ & $0.79$ & $0.78$ & $1.0$ \\
Hexahedron & $1.0$  & $1.0$ & $1.0$ & $1.0$ \\
Higman-Sims graph & $10.0$  & $10.0$ & $6.25$ & time \\
Hoffman-Singleton graph & $2.5$ & $2.5$ & $1.92$ & time \\
\textbf{Icosahedron} & $1.38$ & $1.38$ & $1.67$ & $1.67$ \\
Klein 7-regular Graph & $2.18$ & $2.18$ & $2.05$ & $2.5$ \\
M22 Graph & $7.0$ & $7.09$ & $4.53$ & time \\
Octahedron & $2.0$ & $2.0$ & $2.0$ & $2.0$ \\
\textbf{Pappus Graph} & $0.63$ &  $0.63$ & $0.66$ & $0.78$ \\
Perkel Graph & $1.69$ &  $1.72$ & $1.36$ & time \\
Petersen graph & $1.0$ & $1.0$ & $1.0$ & $1.0$ \\
Schläfli graph & $6.0$ & $6.22$ & $6.0$ & $7.08$ \\
Shrikhande graph & $2.0$ & $2.0$ & $2.0$ & $2.0$ \\
Sims-Gewirtz Graph & $4.0$ & $4.0$ & $2.8$ & time \\
Sylvester Graph & $1.5$ &  $1.5$ & $1.2$ & $1.67$ \\
Tetrahedron & $2.0$ & $2.0$ & $2.0$ & $2.0$ \\
Thomsen graph & $1.5$ &  $1.5$ & $1.29$ & $1.67$ \\
\textbf{Tutte 12-Cage} & $0.28$ & $0.28$ & $0.33$ & time \\
\textbf{Tutte-Coxeter graph} & $0.5$ &  $0.5$ & $0.54$ & $0.6$ \\
Wells graph & $1.38$ & $1.38$ & $1.25$ & $1.5$ \\
\end{tabular}
    \caption{Comparison of the bound from Equation~\eqref{eq:closed_lb_drg} and $\mu_2/2$ and the stronger CR$^\star$ lower bound from \cite{DM2019} for Sage named distance-regular graphs (DRG). Graphs for which Equation~\eqref{eq:closed_lb_drg} outperforms $\mu_2/2$ are highlighted. Values of $i(G)$ marked as `time' are not computed due to time-limit constraints.}
    \label{tab:t5_drg}
\end{table}

\end{document}